\theoremstyle{plain}
\newtheorem{lemma}{Lemma}[section]
\newtheorem{thm}[lemma]{Theorem}
\newtheorem{cond}[lemma]{Condition}
\newtheorem{set}[lemma]{Setting}
\newtheorem{cor}[lemma]{Corollary}
\theoremstyle{definition}
\newtheorem{ex}[lemma]{Example}
\newtheorem{rem}[lemma]{Remark}
\newcommand{\diff}{\ensuremath{\,{d}}}
\newcommand{\Eargs}[1]{\mathbb{E}\left[ #1\right] }
\newcommand{\prob}{\mathbb{P}}
\newcommand{\Cov}{\mathrm{Cov}} 
\newcommand{\vast}{\bBigg@{4}}
\newcommand{\iid}{\overset{i.i.d.}{\sim}}
\newcommand{\R}{\mathbb{R}}
\newcommand{\Rd}{\R^d}
\newcommand{\N}{\mathbb{N}}
\newcommand{\muU}{\mu^U }
\newcommand{\muUn}{\mu^{U}_n }
\newcommand{\muVm}{\mu^{V}_m }
\newcommand{\nuUn}{\nu^{U}_n }
\newcommand{\X}{\mathcal{X}}
\newcommand{\muX}{\mu_\mathcal{X}}
\newcommand{\dX}{d_\mathcal{X}}
\newcommand{\mmspaceX}{\left(\X,\dX,\muX \right) }
\newcommand{\Y}{\mathcal{Y}}
\newcommand{\muY}{\mu_\mathcal{Y}}
\newcommand{\dY}{d_\mathcal{Y}}
\newcommand{\mmspaceY}{\left(\Y,\dY,\muY \right) }
\newcommand{\V}{\mathcal{V}}
\newcommand{\muV}{\mu_\mathcal{V}}
\newcommand{\dV}{d_\mathcal{V}}
\newcommand{\mmspaceV}{\left(\V,\dV,\muV \right) }
\newcommand{\Wtwo}[2]{\mathcal{K}^2_2\left(#1,#2 \right) }
\newcommand{\borelsetsR}{\ensuremath{\mathscr{B}(\R)}}
\newcommand{\indifunc}[1]{\mathds{1}_{\left\lbrace#1 \right\rbrace }}
\newcommand{\ptruedodstat}{DoD_p}
\newcommand{\truedodstat}{DoD_{(0)}}
\newcommand{\truetruncateddod}{DoD_{(\beta)}}
\newcommand{\dodstat}{\widehat{DoD}_{(\beta)}}
\newcommand{\dodstatnn}{\widehat{DoD}_{(\beta)}}
\newcommand{\untruncateddodstat}{\widehat{{DoD}}}
\newcommand{\qu}{\ensuremath{U_n^{-1}}}%
\newcommand{\G}{\ensuremath{\mathbb{G}}}%
\newcommand{\UV}{\ensuremath{\mathbb{Q}_n}^{U,V} }%
\newcommand{\qv}{\ensuremath{V_n^{-1}}}
\newcommand{\XiUVn}{\ensuremath{{\Xi}^{U,V}_n}}
\newcommand{\diam}[1]{\mathrm{diam}\left(#1\right) }
\newcommand{\specialcell}[1]{\ifmeasuring@#1\else\omit$\displaystyle#1$\ignorespaces\fi}
\newcommand{\HadPhi}{\nabla\phi_{inv_F}}%
\providecommand{\keywords}[1]{\textbf{\textit{Keywords}} #1}
\providecommand{\keywordsMSC}[1]{\textbf{\textit{MSC 2010 subject classification}} #1}
\newcommand{\footremember}[2]{%
	\footnote{#2}
	\newcounter{#1}
	\setcounter{#1}{\value{footnote}}%
}
\newcommand{\footrecall}[1]{%
	\footnotemark[\value{#1}]%
}
\def\namedlabel#1#2{\begingroup
	#2%
	\def\@currentlabel{#2}%
	\phantomsection\label{#1}\endgroup
}
\def\thm@space@setup{%
	\thm@preskip=\parskip \thm@postskip=0pt
}
\begin{document}

		\author{Christoph Alexander Weitkamp\footremember{ims}{\scriptsize Institute for Mathematical
				Stochastics, University of G\"ottingen,				Goldschmidtstra{\ss}e 7, 37077 G\"ottingen} 
			\and 
			Katharina Proksch \footremember{UT}{\scriptsize Faculty of Electrical Engineering, Mathematics \& Computer Science, University of Twente,  Hallenweg 19,  7522NH  Enschede}
			\and
			Carla Tameling \footrecall{ims}{}
			\and 
			Axel Munk \footrecall{ims} \footnote{\scriptsize Max Planck Institute for Biophysical
				Chemistry, Am Fa{\ss}berg 11, 37077 G\"ottingen}}
		
\title{Gromov-Wasserstein Distance based Object Matching: Asymptotic Inference}

		\maketitle
\begin{abstract}
	In this paper, we aim to provide a statistical theory for object matching based on the Gromov-Wasserstein distance. To this end, we model general objects as metric measure spaces. Based on this, we propose a simple and efficiently computable asymptotic statistical test for pose invariant object discrimination. This is based on an empirical version of a $\beta$-trimmed lower bound of the Gromov-Wasserstein distance. We derive for $\beta\in[0,1/2)$ distributional limits of this test statistic. To this end, we introduce a novel $U$-type process indexed in $\beta$ and show its weak convergence. Finally, the theory developed is investigated in Monte Carlo simulations and applied to structural protein comparisons.
\end{abstract}

\keywords{Gromov-Wasserstein distance, metric measures spaces, U-processes, distributional limits, protein matching}
		
\keywordsMSC{Primary: 62E20, 62G20, 65C60 Secondary:  60E05 }
	
\section{Introduction}\label{sec:intro}
Over the last decades, the acquisition of geometrically complex data in various fields of application has increased drastically. For the digital organization and analysis of such data it is important to have meaningful notions of \textit{similarity} between datasets as well as between shapes. This most certainly holds true for the area of 3-D object matching, which has many relevant applications, for example in computer vision \citep{viola2001rapid, TorralbaContextbasedvisionsystem2003}, mechanical engineering \citep{AuFeaturebasedreverseengineering1999, El-Mehalawidatabasesystemmechanical2003} or molecular biology \cite{NussinovEfficientdetectionthreedimensional1991, Sandakautomatedcomputervision1995,KrissinelSecondarystructurematchingSSM2004}. In most of these applications, an important challenge is to distinguish between shapes while regarding identical objects in different spatial orientations as equal. A prominent example is the comparison of 3-D protein structures, which is important for understanding structural, functional and evolutionary relationships among proteins \cite{kolodny2005comprehensive,srivastava2016efficient}. Most known protein structures are published as coordinate files, where for every atom a 3-D coordinate is estimated based on an indirect observation of the protein's electron density (see \citet{rhodes2010crystallography} for further details), and stored e.g. in the protein database \textbf{PDB} \cite{proteindatabase}. These coordinate files lack any kind of orientation and any meaningful comparison has to take this into account. \Cref{fig: pose invariant protein matching} (created with \text{PyMOL} \cite{PyMol}) shows two cartoon representations of the backbone of the protein structure 5D0U in two different poses. These two representations obtained from the same coordinate file highlight the difficulty to identify them from noisy measurements.

\begin{figure}
	\centering
	\begin{subfigure}[c]{0.49\textwidth}
		\centering
		\includegraphics[ height=0.85\textwidth,trim=.01cm .01cm .01cm .01cm, clip]{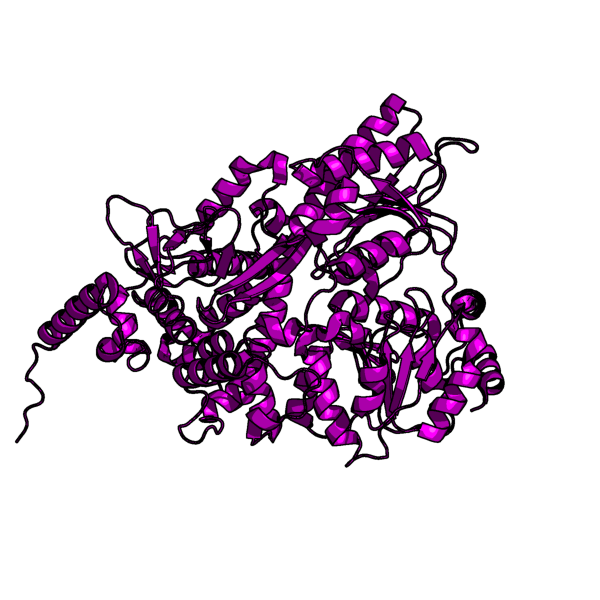}
	\end{subfigure}
	\begin{subfigure}[c]{0.49\textwidth}
				\centering
		\includegraphics[ height=0.85\textwidth,trim=.01cm .01cm .01cm .01cm, clip]{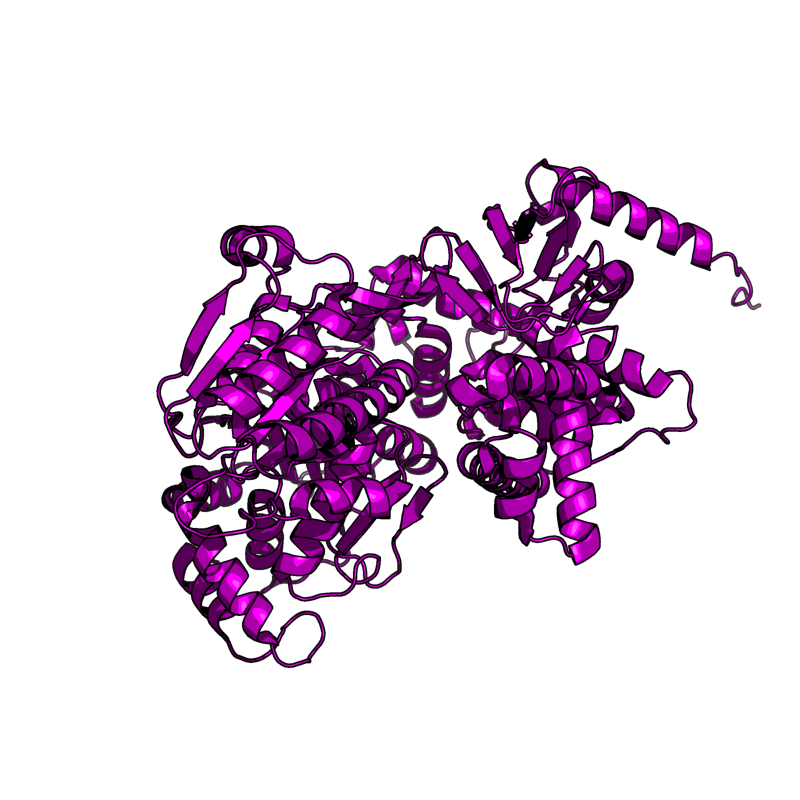}
	\end{subfigure}
	\caption{\textbf{Illustration of the proteins to be compared:} Cartoon representation of the DEAH-box RNA-helicase Prp43 from chaetomium thermophilum bound to ADP (PDB ID: 5D0U \cite{tauchert2016structural}) in two different poses. The DEAH-box helicase Prp43 unwinds double stranded RNA and rearranges RNA/protein complexes. It has essential roles in pre-mRNA splicing and ribosome biogenesis \cite{arenas1997prp43,lebaron2005splicing}.}
	\label{fig: pose invariant protein matching}
\end{figure}

\noindent Consequently, many approaches to pose invariant shape matching, classification and recognition have been suggested and studied in the literature. The majority of these methods computes and compares certain invariants or signatures in order to decide whether the considered objects are \textit{equal} up to a previously defined notion of invariance. In the literature, these methods are often called \textit{feature} (or \textit{signature}) based methods, see \citet{CardenasFeaturebasedSimilaritySearch2005} for a comprehensive survey. Some examples for features are the \textit{shape distributions} \citep{OsadaShapedistributions2002}, that are connected to the distributions of lengths, areas and volumes of an object, the \textit{shape contexts} \citep{BelongieShapematchingobject2002}, that rely in a sense on a local distribution of inter-point distances of the considered object, and \textit{reduced size functions} \citep{dAmicoNaturalpseudodistanceoptimal2008}, which count the connected components of certain lower level sets.\\
As noted by \citet{ MemoliuseGromovHausdorffDistances2007, Memoli2011}, several signatures describe different aspects of a metric between objects.
In these and subsequent papers, the author develops a unifying view point by representing an object as \textit{metric measure space} $\mmspaceX$, where $\left( \mathcal{X},\dX\right)$ is a compact metric space and $\muX$ denotes a Borel probability measure on $\mathcal{X}$. The additional probability measure, whose support is assumed to be $\mathcal{X}$, can be thought of as signaling the importance of different regions of the modeled object. Based on the original work of \citet{gromov1999metric}, \citet{Memoli2011} introduced the \textit{Gromov-Wasserstein distance} of order $p\in[1,\infty)$ between two (compact) metric measure spaces $\mmspaceX$ and $\mmspaceY$ which will be fundamental to this paper. We allow ourselves to rename it to \textit{Gromov-Kantorovich distance} to give credit to Kantorovich's initial work \cite{kantorovich1942translocation} underlying this concept (see \citet{vershik2007lv}). It is defined as	\begin{equation}
{\mathcal{GK}_p}(\mathcal{X},\mathcal{Y})=\inf_{\pi\in\mathcal{M}(\mu_\mathcal{X},\mu_\mathcal{Y})}J_p(\pi),\label{eq:GKdist}\end{equation}
where 
\begin{align*}
J_p(\pi)\coloneqq&\frac{1}{2}\left(\int_{\mathcal{X}\times\mathcal{Y}}\int_{\mathcal{X}\times\mathcal{Y}} \! \left|d_\mathcal{X}(x,x')-d_\mathcal{Y}(y,y') \right|^p \,\pi(dx\times dy)\,\pi(dx'\times dy')\right)^{\frac{1}{p}}. \end{align*}
Here, $\mathcal{M}(\mu_\mathcal{X},\mu_\mathcal{Y})$ stands for the set of all couplings of $\mu_\mathcal{X}$ and $\mu_\mathcal{Y}$, i.e., the set of all measures $\pi$ on the product space $\mathcal{X}\times\mathcal{Y}$ such that
\[\pi\left(A\times\mathcal{Y}\right)=\muX\left(A\right)~\mathrm{and}~\pi\left(\mathcal{X}\times B\right)=\muY\left(B\right)\]
for all measurable sets $A\subset\mathcal{X}$ and $B\subset\mathcal{Y}$. In Section 5 of \citet{Memoli2011} it is ensured that the Gromov-Kantorovich distance ${\mathcal{GK}_p}$ is suitable for pose invariant object matching by proving that it is a metric on the collection of all isomorphism classes of metric measure spaces.\footnote{Two metric measure spaces $\mmspaceX$ and $\mmspaceY$ are isomorphic (denoted as $\mmspaceX\cong\mmspaceY$) if and only if there exists an isometry $\phi:\mathcal{X}\to\mathcal{Y}$ such that $\phi\#\mu_\mathcal{X}=\mu_\mathcal{Y}$. Here, $\phi\#\mu_\mathcal{X}$ denotes the pushforward measure.} Hence, objects are considered to be the same if they can be transformed into each other without changing the distances between their points and such that the corresponding measures are preserved. For example, if the distance is Euclidean, this leads to identifying objects up to translations, rotations and reflections \cite{lomont2014applications}.

\noindent The definition of the Gromov-Kantorovich distance extends the \textit{Gromov-Hausdorff distance}, which is a metric between compact metric spaces \cite{MemoliComparingPointClouds2004,MemoliTheoreticalComputationalFramework2005,MemoliuseGromovHausdorffDistances2007}. The additional measure structure of metric measure spaces allows to replace the Hausdorff distance component in the definition of the Gromov-Hausdorff distance by a relaxed notion of proximity, namely the \textit{Kantorovich distance} \citep{kantorovich1942translocation}. This distance is fundamental to a variety of mathematical developments and is also known in different communities as the Wasserstein distance \cite{vaserstein1969markov}, Kantorovich-Rubinstein distance \citep{kantorovich1958space}, Mallows distance \citep{mallows1972note} or as the Earth Mover's distance \citep{rubner2000earth}.
The Kantorovich distance of order $p$ ($p\geq 1$) between two probability measures $\mu_\mathcal{X}$ and $\nu_\mathcal{X}$ on the compact metric space $\left(\mathcal{X},d_\mathcal{X}\right) $ is defined as
\begin{equation}
{\mathcal{K}_p}(\mu_\mathcal{X},\nu_\mathcal{X})=\left\lbrace\inf_{\pi\in\mathcal{M}(\mu_\mathcal{X},\nu_\mathcal{X})}\int_{\mathcal{X}\times\mathcal{X}}\! \,d^p_\mathcal{X}(x,y)\,d\pi(x,y) \right\rbrace^{\frac{1}{p}}, \label{eq:Wasserstein}
\end{equation}
where $\mathcal{M}(\mu_\mathcal{X},\nu_\mathcal{X})$ denotes the set of all
couplings of $\mu_\mathcal{X}$ and $\nu_\mathcal{X}$. Let $\mathcal{P}(\mathcal{X})$ be the space of all probability measures on $\mathcal{X}$. Then, as compact metric spaces are in particular Polish, the Kantorovich distance defines a metric on
\[\mathcal{P}_p\left(\mathcal{X}\right)=\bigg\{\mu\in \mathcal{P}(\mathcal{X})~\bigg|~\int_\mathcal{X}d^p_\mathcal{X}(x_0,x)\,d\mu(x)<\infty,~x_0\in\mathcal{X}~\mathrm{arbitrary}\bigg\} \]
and it metrizes weak convergence together with convergence of moments of order $p$ \citep{villani2008optimal}. 

\noindent Due to its measure preserving metric invariance, the Gromov-Kantorovich distance is conceptually well suited for pose invariant object discrimination. Heuristically speaking, the Gromov-Kantorovich point of view suggests to regard a data cloud as a metric measure space by itself, which  takes into account its internal metric structure (encoded in its pairwise distances), and provides a transportation between metric spaces without loss of mass. In particular, it does not rely on the embedding in an external space. For the above described protein comparison problem, the coordinate system in which the atoms are represented does not matter. Hence, the Gromov-Kantorovich distance is only influenced by the internal relations between the backbone atoms - which matches the physical understanding of structural similarity in this context. However, the practical usage of the Gromov-Kantorovich approach is severely hindered by its computational complexity: For two finite metric measure spaces $\mathcal{X}=\left\lbrace x_1,\dots,x_n\right\rbrace$ and $\mathcal{Y}=\left\lbrace y_1,\dots,y_m\right\rbrace$ with metrics $d_\mathcal{X}$ and $d_\mathcal{Y}$ and probability measures $\mu_{\mathcal{X}}$ and $\mu_\mathcal{Y}$, respectively, the computation of ${\mathcal{GK}_p}\left(\mathcal{X},\mathcal{Y}\right)$ boils down to solving a (non-convex) quadratic program \citep[Sec. 7]{Memoli2011}. This is in general NP-hard \citep{pardalos1991quadratic}. To circumvent the precise determination of the Gromov-Kantorovich distance, it has been suggested to approximate it via gradient descent \cite{Memoli2011} or to relax the corresponding optimization problem. For example, \citet{solomon2016entropic} proposed the \textit{entropic Gromov-Kantorovich distance}, which has been applied to find correspondences between word embedding spaces, an important task for machine translation \cite{alvarez2018gromov}. In this paper, we take a different route and investigate the potential of a lower bound for $\mathcal{GK}_p$, which is on the one hand extremely simple to compute in $O(n^2)$ elementary operations and on the other hand statistically accessible and useful for inference tasks, in particular for object discrimination when the data are randomly sampled or the data set is massive and subsampling becomes necessary. As this bound quantifies the optimal transport distance between the \textit{distributions of pairwise distances} (see \Cref{subsec:proposed approach}), we believe that our analysis is of quite general statistical interest beyond the described scenario.
\subsection{The Proposed Approach}\label{subsec:proposed approach}
Given two metric measure spaces, denoted as $\mmspaceX$ and $\mmspaceY$, we aim to construct an (asymptotic) test for the hypothesis that these spaces are isomorphic, viz.
\begin{align}H_0:~\mmspaceX\cong\mmspaceY,\label{eq:nullhypothesis}
\end{align}
against the alternative
\begin{align}\label{eq:alternative}H_1:~\mmspaceX\ncong\mmspaceY.
\end{align}
This test will be based on an efficiently computable empirical version of a lower bound of the Gromov-Kantorovich distance. 

\noindent Let $\muU$ be the probability measure of the random variable $d_{\mathcal{X}}(X,X')$, where $X,X'\overset{i.i.d.}{\sim}\mu_\mathcal{X}$, and let $\mu^V$ be the one of $d_{\mathcal{Y}}(Y,Y')$, with $Y,Y'\overset{i.i.d.}{\sim}\mu_\mathcal{Y}$. Then, we call $\muU$ and $\mu^V$ the \textit{distribution of the (pairwise) distances} of $\mmspaceX$ and $\mmspaceY$, respectively. Fundamental to our approach is the fact that the Gromov-Kantorovich distance between two metric measure spaces $\mmspaceX$ and $\mmspaceY$ of order $p\in[1,\infty)$ is lower bounded by
\begin{equation}
{\mathcal{GK}_p}(\mathcal{X}, \mathcal{Y})\geq\frac{1}{2}\!\left(\ptruedodstat\left(\mathcal{X},\mathcal{Y}\right)\right)^{\frac{1}{p}} \coloneqq\frac{1}{2}\left(\int_{0}^{1}\!\left| U^{-1}(t)-V^{-1}(t)\right| ^p dt \right)^{\frac{1}{p}}\label{eq:introlowerbound},
\end{equation}
where $U^{-1}$ and $V^{-1}$ are the quantile functions of $\muU$ and $\mu^V$, respectively \cite{Memoli2011}. If the right hand side of \eqref{eq:introlowerbound} is positive, so is ${\mathcal{GK}_p}$ and it is thus possible to base a statistical test for $H_0$ on an empirical version of this quantity. Furthermore, the above lower bound possesses several theoretical and practical features that make it worthy of study:

\noindent 1.) Reformulating \eqref{eq:introlowerbound} yields that the Gromov-Kantorovich distance between two metric measure spaces is lower bounded by the Kantorovich distance of the respective distributions of distances (one dimensional quantities), i.e.,
\[\ptruedodstat(\X,\Y)=\mathcal{K}_p^p\left(\muU,\mu^V\right)=\inf_{\pi\in\mathcal{M}\left(\muU,\mu^V\right)}\int_{\mathbb{R}\times\mathbb{R}}\! \,|x-y|^p\,d\pi(x,y). \]
Interestingly, there is another reformulation of $\ptruedodstat$ in terms of an optimal transport problem between the product measures $\muX\otimes\muX$ and $\muY\otimes\muY$. It is shown in \cite[Thm. 24]{chowdhury2019gromov} that
\begin{equation}\label{eq:SLB OT-form}
\ptruedodstat\left(\mathcal{X},\mathcal{Y}\right)\!=\!\inf_{\pi\in\widetilde{\mathcal{M}}}\,\int_{\X\times\X\times\Y\times\Y}\left|\dX(x,x')-\dY(y,y') \right|^pd\pi(x,x'y,y'),\end{equation}
where $\widetilde{\mathcal{M}}\coloneqq\mathcal{M}(\mu_\mathcal{X}\otimes\mu_\mathcal{X},\mu_\mathcal{Y}\otimes\muY)$. 
This representation emphasizes the relation between $\ptruedodstat$ and the Gromov-Kantorovich distance defined in \eqref{eq:GKdist}, as clearly $\pi\otimes\pi\in\widetilde{\mathcal{M}}$ for all $\pi\in\mathcal{M}(\muX,\muY)$.

\noindent 2.) Although it is known that the distribution of distances does not uniquely characterize a metric measure space \cite{Memoli2011}, it was proposed as a feature itself for feature based object matching and was shown to work well in practice in various examples \cite{OsadaShapedistributions2002,brinkman2012invariant,berrendero2016shape,gellert2019substrate}. In fact, \citet{gellert2019substrate} applied several lower bounds of the Gromov-Kantorovich distance stated in \cite{Memoli2011} for the comparison of the isosurfaces of various proteins. The authors empirically found that $\ptruedodstat$ defined in \eqref{eq:introlowerbound} has high discriminative abilities for this task. 

\noindent 3.) Generally, $\ptruedodstat$ is a simple and natural measure to compare distance matrices. Such distance matrices underlie many methods of data analysis, e.g. various multidimensional scaling techniques (see \citet{dokmanic2015euclidean}).

\noindent 4.) The representation \eqref{eq:introlowerbound} admits an empirical version which is computable in effectively $O\left((m\vee n)^2\right)$ operations, if the computation of one distance is considered as $O(1)$. To this end, let $X_1,\dots,X_n\overset{i.i.d.}{\sim}\mu_\mathcal{X}$ and $Y_1,\dots,Y_m\overset{i.i.d.}{\sim}\mu_\mathcal{Y}$ be two independent samples and let $\mathcal{X}_n=\left\lbrace X_1,\dots,X_n\right\rbrace $ and $\mathcal{Y}_m=\left\lbrace Y_1,\dots,Y_m\right\rbrace $. The sample analog to \eqref{eq:introlowerbound} is to be defined with respect to the empirical measures and we obtain as empirical counterpart to \eqref{eq:introlowerbound} the \textit{DoD-statistic} as
\begin{equation}
 \untruncateddodstat_p=\untruncateddodstat_{p}\left(\mathcal{X}_n,\mathcal{Y}_m\right) \coloneqq\int_{0}^{1}\!\left| U^{-1}_n(t)-V^{-1}_{m}(t)\right| ^p \, dt \label{eq:introlowerboundempversion},\end{equation}
where, for $t\in\R$, $U_n$ is defined as the empirical c.d.f. of all pairwise distances of the sample $\X_{n}$,
\begin{equation}\label{eq:def of U_n}U_n(t)\coloneqq\frac{2}{n(n-1)}\sum_{1\leq i<j\leq n}\mathds{1}_{\left\lbrace d_\mathcal{X}(X_i,X_j)\leq t\right\rbrace }.\end{equation}
Analogously, we define for the sample $\Y_n$
\begin{equation}\label{eq:def of V_m}V_m(t)\coloneqq\frac{2}{m(m-1)}\sum_{1\leq k<l\leq m}\mathds{1}_{\left\lbrace d_\mathcal{Y}(Y_k,Y_l)\leq t\right\rbrace }.\end{equation}
Besides, $U^{-1}_n$ and $V^{-1}_m$ denote the corresponding empirical quantile functions. We stress that the evaluation of $\untruncateddodstat_p$ boils down to the calculation of a sum and no formal integration is required. For $n=m$ it holds
\[\untruncateddodstat_p=\frac{2}{n(n-1)}\sum_{i=1}^{n(n-1)/2}\left|d^\X_{(i)}-d^\Y_{(i)} \right|^p ,\]
 where $ d^\X_{(i)}$ denotes the $i$-th order statistic of the sample $\left\lbrace\dX(X_i,X_j) \right\rbrace_{1\leq i < j\leq n} $ and $d^\Y_{(i)}$ is defined analogously. For $n\neq m$ we obtain that 
\[\untruncateddodstat_p=\sum_{i=1}^{\frac{n(n-1)}{2}}\sum_{j=1}^{\frac{m(m-1)}{2}}\lambda_{ij}\left|d^\X_{(i)}-d^\Y_{(j)} \right|^p ,\] 
 where
 \[\lambda_{ij}=\left( \frac{i}{n}\wedge\frac{j}{m}-\frac{i-1}{n}\vee\frac{j-1}{m}\right)\indifunc{{i}{m}\wedge{j}{n}>{(i-1)}{m}\vee{(j-1)}{n}}. \]
 Here, and in the following, $a\wedge b$ denotes the minimum and $a\vee b$ the maximum of two real numbers $a$ and $b$.
\subsection{Main Results}\label{subsec:main results}
The main contributions of the paper are various upper bounds and distributional limits for the statistic defined in \eqref{eq:introlowerboundempversion} (as well as trimmed variants). Based on these, we design an asymptotic test that compares two distributions of distances and thus obtain an asymptotic test for the hypothesis $H_0$ defined in \eqref{eq:nullhypothesis}. In the course of this, we focus, for ease of notation, on the case $p=2$, i.e., we derive for $\beta\in[0,1/2)$ the limit behavior of the statistic 
\[\dodstat\coloneqq\int_{\beta}^{1-\beta}\!\left(U_n^{-1}(t)-V_m^{-1}(t)\right)^2 \, dt\]
under the hypothesis \eqref{eq:nullhypothesis} as well as under the alternative in \eqref{eq:alternative}. The introduced trimming parameter $\beta$ can be used to robustify the proposed method \cite{czado1998assessing,alvarez2008trimmed}. Most of our findings can easily be transferred to the case of $p\in[1,\infty)$, which is readdressed in \Cref{subsec:secondcase}. Next, we briefly summarize the setting in which we are working and introduce the conditions required.
\begin{set}\label{cond:setting}
	Let $\mmspaceX$ and $\mmspaceY$ be two metric measure spaces and let $\mu^U$ and $\mu^V$ denote the distributions of (pairwise) distances of the spaces $\mmspaceX$ and $\mmspaceY$, respectively. Let $U$ denote the c.d.f. of $\mu^U$, assume that $U$ is differentiable with derivative $u$ and let $U^{-1}$ be the quantile function of $U$. Let $V$, $V^{-1}$ and $v$ be defined analogously. Further, let the samples $X_1,\dots,X_n\iid \muX$ and $Y_1,\dots,Y_m\iid \muY$ be independent of each other and let $U^{-1}_n$ and $V^{-1}_m$ denote the empirical quantile functions of $U_n$ defined in \eqref{eq:def of U_n} and $V_m$ defined in \eqref{eq:def of V_m}. 
\end{set}
Since the statistic $\dodstat$ is based on empirical quantile functions, or more precisely empirical $U$-quantile functions, we have to ensure that the corresponding $U$-distribution functions are well-behaved. In the course of this, we distinguish the cases $\beta\in(0,1/2)$ and $\beta=0$. The subsequent condition guarantees that the inversion functional $\phi_{inv}:F\mapsto F^{-1}$ is Hadamard differentiable as a map from the set of restricted distribution functions into the space of all bounded functions on $[\beta,1-\beta]$, in the following denoted as $\ell^\infty[\beta,1-\beta]$. 
\begin{cond}\label{cond:firstcondition} Let $\beta\in(0,1/2)$ and let $U$ be continuously differentiable on an interval \[[C_1,C_2]=[U^{-1}(\beta)-\epsilon,U^{-1}(1-\beta)+\epsilon]\] for some $\epsilon>0$ with strictly positive derivative $u$ and let the analogue assumption hold for $V$ and its derivative $v$.
\end{cond}
When the densities of $\muU$ and $\mu^V$ vanish at the boundaries of their support, which commonly happens (see \Cref{ex:dod3}), we can no longer rely on Hadamard differentiability to derive the limit distribution of $\dodstat$ under $H_0$ for $\beta=0$. In order to deal with this case we require stronger assumptions. The following ones resemble those of \citet{mason1984weak}.
\begin{cond}\label{cond:secondcondition} Let $U$ be continuously differentiable on its support. Further, assume there exist constants $-1<\gamma_1,\gamma_2<\infty$ and $c_U>0$ such that
	\[\left| (U^{-1})'(t)\right| \leq c_Ut^{\gamma_1}(1-t)^{\gamma_2}\]
	for $t\in(0,1)$ and let the analogue assumptions hold for $V$ and $(V^{-1})'$.
\end{cond}
Both, \Cref{cond:firstcondition} and \Cref{cond:secondcondition} are comprehensively discussed in \Cref{subsec:conditions} and various illustrative examples are given there.

\noindent With the main assumptions stated we can now specify the results derived under the hypothesis $H_0$ and afterwards those under the alternative $H_1$.
Under $H_0$ we have that the distributions of distances of the considered metric measure spaces, $\muU$ and $\mu^V$, are equal, i.e., $U(t)=V(t)$ for $t\in\R$. Given \Cref{cond:firstcondition} we find that for $\beta\in(0,1/2)$ (resp. given \Cref{cond:secondcondition} for $\beta=0$) and $n,m\to\infty$
\begin{align}
\frac{nm}{n+m}\dodstat\rightsquigarrow\Xi=\Xi(\beta) \coloneqq\int_{\beta}^{1-\beta}\!\left(\mathbb{G}(t)\right) ^2 \, dt,\label{eq:defofXi}
\end{align} 
where $\mathbb{G}$ is a centered Gaussian process with covariance depending on $U$ (under $H_0$ we have $U=V$) in an explicit but complicated way, see \Cref{thm:main2}.
Further, ``$\rightsquigarrow$'' denotes weak convergence in the sense of Hoffman-J{\o}rgensen (see \citet[Part 1]{vaartWeakConvergenceEmpirical1996}). 
Additionally, we establish in \Cref{sec:Limit distribution} a simple concentration bound for $\dodstat$
and demonstrate that for $\beta\in(0,1/2)$ and $\alpha\in(0,1)$ the corresponding $\alpha$-quantile of $\Xi$, which is required for testing, can be obtained by a bootstrap scheme, see \Cref{sec:bootsrapping the quantiles}.

\noindent Next, we summarize our findings under $H_1$. As we work with a lower bound, the limit behavior under the alternative is a little more complex.
Under the additional assumption that \[\truetruncateddod\coloneqq\int_{\beta}^{1-\beta}\!\left(U^{-1}(t)-V^{-1}(t)\right)^2 \, dt > 0,\] we can prove (cf. \Cref{thm:main}) that given \Cref{cond:firstcondition} it holds for $n,m\to\infty$ and $\beta\in(0,1/2)$ (resp. given \Cref{cond:secondcondition} for $\beta=0$) that 
\begin{equation}\sqrt{\frac{nm}{n+m}}\left(\dodstat-\truetruncateddod \right)\rightsquigarrow N(0,\sigma_{U,V,\lambda}^2),\label{eq:statement2}\end{equation}
where $N(0,\sigma_{U,V,\lambda}^2)$ denotes a normal distribution with mean 0 and variance $\sigma_{U,V,\lambda}^2$ depending on $U$, $V$, $\beta$ and $\lambda=\lim_{n,m\to\infty}\frac{n}{m+n}$. 
%
\subsection{Applications}\label{subsec:applications}
From our theory it follows that for $\beta\in[0,1/2)$ a (robust) asymptotic level-$\alpha$-test for $H_0$ against $H_1$ is given by rejecting $H_0$ in \eqref{eq:nullhypothesis} if
\begin{equation}\label{eq:decision rule}
	\frac{nm}{n+m}\dodstat> \xi_{1-\alpha},
\end{equation}
where $\xi_{1-\alpha}$ denotes the $(1-\alpha)$-quantile of $\Xi$. The simulations in \Cref{sec:simulations} demonstrate that, although it is based on a lower bound of the Gromov-Kantorovich distance $\mathcal{GK}_p$ between the metric measure spaces, the proposed test (as well as its bootstrap version) represents a powerful method to detect deviations between metric measure spaces in the sense of \eqref{eq:nullhypothesis}. This has many possible applications.
\begin{figure}
	\centering
	\begin{subfigure}[c]{0.49\textwidth}
			\centering
		\includegraphics[ height=0.8\textwidth,trim=.01cm .01cm .01cm .01cm, clip]{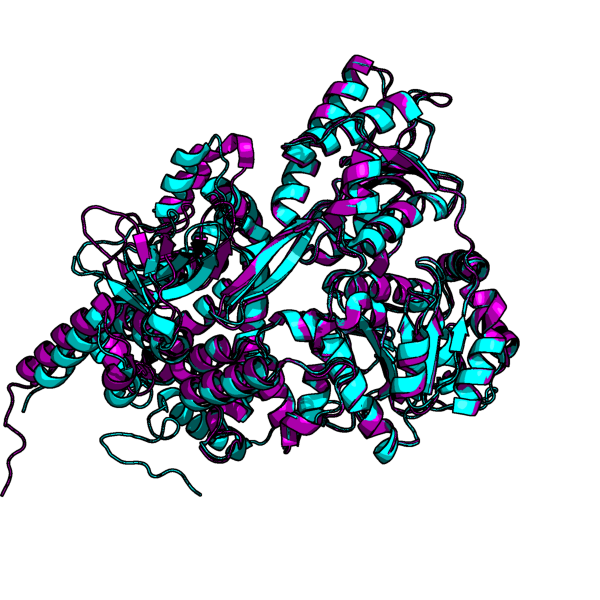}
	\end{subfigure}
	\begin{subfigure}[c]{0.49\textwidth}
			\centering
		\includegraphics[ height=0.8\textwidth,trim=.01cm .01cm .01cm .01cm, clip]{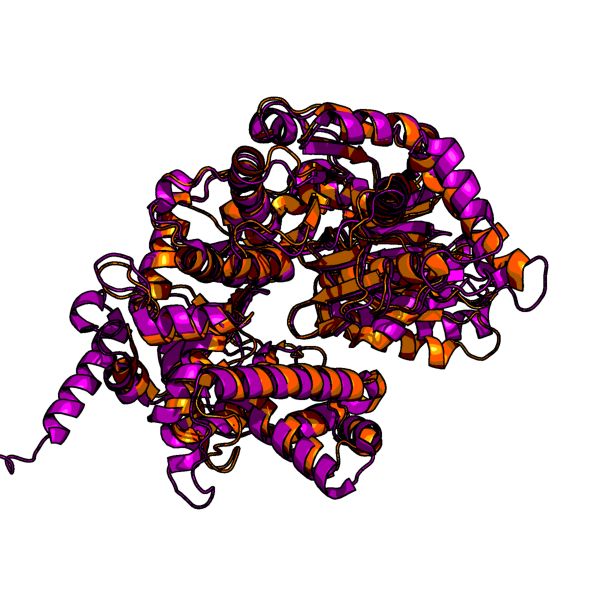}
	\end{subfigure}
	\caption{\textbf{Illustration of the proteins to be compared:} Cartoon representation of the DEAH-box RNA-helicase Prp43 from chaetomium thermophilum bound to ADP (purple, PDB ID: 5D0U \cite{tauchert2016structural}) in alignment with Prp43 from saccharomyces cerevisiae in complex with CDP (cyan, PDB ID: 5JPT \cite{robert2016functional}, left) and in alignment with the DEAH-box RNA helicase Prp2 in complex with ADP (orange, PDB ID: 6FAA \cite{schmitt2018crystal}, right). Prp2 is closely related to Prp43 and is necessary for the catalytic activation of the spliceosome in pre-mRNA splicing \cite{kim1996spliceosome}.}
	\label{fig: protein visiualization}
\end{figure}
\noindent 
Exemplarily, in \Cref{sec:application}, we model proteins as such metric measure spaces by assuming that the coordinate files are samples from (unknown) distributions (see \cite{rhodes2010crystallography}) and apply the theory developed to compare the protein structures depicted in \Cref{fig: protein visiualization}. Our major findings can be summarized as follows:

\noindent\textbf{5D0U vs 5JPT:} 5D0U and 5JPT are two structures of the same protein extracted from different organisms. Consequently, their secondary structure elements can almost be aligned perfectly (see \Cref{fig: protein visiualization}, left). Only small parts of the structures are slightly shifted and do not overlap in the alignment. Applying \eqref{eq:decision rule} for this comparison generally yields no discrimination between these two protein structures, as $\truetruncateddod$ is robust with respect to these kinds of differences. This robustness indeed makes the proposed method particularly suitable for protein structure comparison. 

\noindent\textbf{5D0U vs 6FAA:} 5D0U and 6FAA are structures from closely related proteins and thus they are rather similar. Their alignment (\Cref{fig: protein visiualization}, right) shows minor differences in the orientation of some secondary structure elements and that 5D0U contains an $\alpha$-helix that is not present in 6FAA. We find that $\truetruncateddod$ is highly sensitive to such a deviation from $H_0$, as the proposed procedure discriminates very well between both structures already for small sample sizes.


\noindent Besides of testing, we mention that our theory also justifies subsampling (possibly in combination with bootstrapping) as an effective scheme to reduce the computational costs of $O\left((m\vee n)^2\right)$ further to evaluate $\dodstat$ for large scale applications.
\subsection{Related Work}
 First, we note that $U_n$ and $V_m$ can be viewed as empirical c.d.f.'s of the $N\coloneqq n(n-1)/2$ and $M\coloneqq m(m-1)/2$ random variables $d_\mathcal{X}(X_i,X_j)$ , $1\leq i<j\leq n$, and $d_\mathcal{Y}(Y_k,Y_l)$, $1\leq k<l\leq m$, respectively. Hence, \eqref{eq:introlowerboundempversion} can be viewed as the one dimensional empirical Kantorovich distance with $N$ and $M$ data, respectively. There is a long standing interest in distributional limits for the one dimensional empirical Kantorovich distance \citep{MunkNonparametricvalidationsimilar1998,BarrioTestsgoodnessfit1999,BarrioAsymptoticsL2functionals2005, bobkov2016one,sommerfeld2018inference,tameling2019empirical} as well as for empirical Kantorovich type distances with more general cost functions \cite{berthet2017central, berthet2019weak}. Apparently, the major difficulty in our setting arises from the dependency of the random variables $\{\dX(X_i,X_j)\}$ and the random variables $\{\dY(Y_k,Y_l)\}$, respectively.
 Compared to the techniques available for stationary and $\alpha$-dependent sequences \cite{dede2009empirical,dedecker2017behavior}, the statistic $\dodstat$ admits an intrinsic structure related to $U$- and $U$-quantile processes \cite{NolanProcessesRatesConvergence1987,nolan1988functional,arcones1993limit,arcones1994u,wendler2012u}. 
 Note that for $\beta>0$ we could have used the results of \citet{wendler2012u} to derive the asymptotics of $\dodstat$ as well, as they provide almost sure approximations of the empirical $U$-quantile processes $\mathbb{U}_n^{-1}\coloneqq\sqrt{n}\left(U_n^{-1}-U^{-1}\right)$ and $\mathbb{V}_m^{-1}\coloneqq\sqrt{m}\left(V_m^{-1}-V^{-1}\right)$ in $\ell^\infty[\beta,1-\beta]$, however at the expense of slightly stronger smoothness requirements on $U$ and $V$. In contrast, the case $\beta=0$ is much more involved as the processes $\mathbb{U}_n^{-1}$ and $\mathbb{V}_m^{-1}$ do in general not converge in $\ell^\infty(0,1)$ under \Cref{cond:secondcondition} and the technique in \cite{wendler2012u} fails. Under the hypothesis, we circumvent this difficulty by targeting our statistic for $\beta=0$ directly, viewed as a process indexed in $\beta$. Under the alternative, we show the Hadamard differentiability of the inversion functional $\phi_{inv}$ onto the space $\ell^1(0,1)$ and verify that this is sufficient to derive \eqref{eq:statement2}.\\
 Notice that tests based on distance matrices appear naturally in several applications, see, e.g., the recent works \cite{baringhaus2004new,sejdinovic2013equivalence,montero2019two}, where the two sample homogeneity problem, i.e., testing whether two probability measures $\mu,\nu\in\mathcal{P}(\R^d)$ are equal, is considered for high dimensions. Most similar in spirit to our work is \citet{brecheteau2019statistical} who also considers an asymptotic statistical test for the hypothesis defined in \eqref{eq:nullhypothesis}. However, the latter method is based on a nearest neighbor-type approach and subsampling, which relates to a different lower bound of the Gromov-Kantorovich distance. Moreover, the subsampling scheme is such that asymptotically all distances considered are independent, while we explicitly deal with the dependency structures present in the entire sample of the $n(n-1)/2$ distances. In \Cref{subsec:bootstrap test} and \Cref{subsec:comparisontoBrecheteau} we empirically demonstrate that this leads to an increase of power and compare our test with the one proposed by \citet{brecheteau2019statistical} in more detail.
 
 \noindent Due to its exponential computational complexity the practical potential of the Gromov-Kantorovich distance has rarely been explored. Notable exceptions are very recent. We mention  \citet{liebscher2018new}, who suggested a poly-time algorithm for a Gromov-Hausdorff type metric on the space of phylogenetic trees, \citet{chowdhury2019gromovaveraging}, who applied the Gromov-Kantorovich distance to develop new tools for network analysis, and \citet{gellert2019substrate}, who used and empirically compared several lower bounds for the Gromov-Kantorovich distance for clustering of various redoxins, including our lower bound in \eqref{eq:introlowerbound}. In fact, to reduce the computational complexity they employed a bootstrap scheme related to the one investigated in this paper and reported empirically good results. Finally, we mention that permutation based testing for $U$-statistics (see e.g. \citet{berrett2020optimal}) is an interesting alternative to our bootstrap test and worth to be investigated further in our context. 
\subsection{Organization of the Paper}
\Cref{sec:Limit distribution} states the main results and is concerned with the derivation of \eqref{eq:defofXi}, a simple finite sample bound for the expectation of $\dodstat$ as well as the proof of \eqref{eq:statement2}. In \Cref{sec:bootsrapping the quantiles} we propose for $\beta\in(0,1/2)$ a bootstrapping scheme to approximate the quantiles of $\Xi$ defined in \eqref{eq:defofXi}. Afterwards in \Cref{sec:simulations} we investigate the speed of convergence of $\dodstat$ to its limit distribution under $H_0$ as well as its behavior under the alternative in a Monte Carlo study. In this section we further study the introduced bootstrap approximation and investigate what kind of differences are detectable employing $\dodstat$ by means of various examples. We apply the proposed test for the discrimination of 3-D protein structures in \Cref{sec:application} and compare our results to the ones obtained by the method of \citet{brecheteau2019statistical}. Our simulations and data analysis of the example introduced previously (see \Cref{fig: protein visiualization}) suggest that the proposed $\dodstat$ based test 
 outperforms the one proposed by \citet{brecheteau2019statistical} for protein structure comparisons.\\
As the proofs of our main results are quite technical and involved, the key ideas are stated in \Cref{appendix:proofs} and the full proofs are given in Part I of the supplement \cite{supplement}. Part II of the supplement \cite{supplement} contains several technical auxiliary results that seem to be folklore, but have not been written down explicitly in the literature, to the best of our knowledge.

\textbf{Notation} Throughout this paper, $\borelsetsR$ denotes the Borel sets on $\R$ and ``$\Rightarrow$'' stands for the classical weak convergence of measures (see \citet{billingsley2008probability}). Let $T$ be an arbitrary set. Then, the space $\ell^\infty(T)$ denotes the usual space of all uniformly bounded, $\R$-valued functions on $T$ and $\ell^p(T)$, $p\in[1,\infty)$, the space of all $p$-integrable, $\R$-valued functions on $T$. Given an interval $[a,b]$, let $D[a,b]$ be the c\`{a}dl\`{a}g functions on $[a,b]$ (see \citet{BillingsleyConvergenceProbabilityMeasures2013}) and $\mathbb{D}_2\subset D[a,b]$ the set of distribution functions of measures concentrated on $(a,b]$. 
\section{Limit Distributions}\label{sec:Limit distribution}
For the investigation of the limit behavior of the proposed test statistic, we have to distinguish two cases.

\noindent$\boldsymbol{\truedodstat=0}$: Then, it holds $\muU=\mu^V$, see \Cref{thm:main2}.

$\boldsymbol{\truedodstat>0}$: Here, we have $\muU\neq\mu^V$, see \Cref{thm:main}.

These cases do not correspond exactly to the hypothesis $H_0$ and the alternative $H_1$. Under $H_0$ it always holds that the distributions of distances of the considered metric measure spaces are equal. Therefore, the limit distribution of $\dodstat$ in this case is essential for proving that the test induced by \eqref{eq:decision rule} asymptotically is a level $\alpha$ test. However, as already mentioned in \Cref{subsec:proposed approach} the distributions of distances do not characterize isomorphic metric measure spaces uniquely, i.e., $\muU=\mu^V$ can happen in some rare cases under $H_1$, as well. Consequently, to analyze the test's asymptotic power we assume that the distributions of distances of the considered metric measure spaces do not coincide, i.e., $\truedodstat>0$.
\subsection{Conditions on the distributions of distances}\label{subsec:conditions}
Before we come to the limit distributions of the test statistic $\dodstat$ under $H_0$ and $H_1$, we discuss \Cref{cond:firstcondition} and \Cref{cond:secondcondition}. We ensure that these conditions comprise reasonable assumptions on metric measure spaces that are indeed met in some standard examples.   
\begin{ex}\label{ex:dod3}
	\begin{enumerate} 
	\item Let $\mathcal{X}$ be the unit square in $\R^2$, $d_\mathcal{X}(x,y)=||x-y||_\infty$ for $x,y\in\R^2$ and let $\mu_\mathcal{X}$ the uniform distribution on $\mathcal{X}$. Let $X,X'\iid\muX$.
Then, a straight forward calculation shows that the density $u_1$ of $\dX(X,X')$ is given as
\[u_1(s)=\begin{cases}
4s^3-12s^2+8s, &\mathrm{if}~ 0\leq s\leq1\\
0, & \mathrm{else}.
\end{cases}\]
For an illustration of $u_1$ see \Cref{fig:doddensities} (a). Obviously, $u_1$ is strictly positive and continuous on $(0,1)$ and thus \Cref{cond:firstcondition} is fulfilled for any $\beta\in(0,1/2)$ in the present setting. Furthermore, we find in this framework that for $t\in(0,1)$	\begin{equation}\label{eq:example quantile function}U_1^{-1}(t)=-\sqrt{-\sqrt{t}+1}+1.\end{equation}
Since 
\[\left|(U_1^{-1})'(t)\right| =\frac{1}{4\sqrt{1-\sqrt{t}}\sqrt{t}}\leq t^{-\frac{1}{2}}(1-t)^{-\frac{1}{2}}\]
for $t\in(0,1)$, the requirements of \Cref{cond:secondcondition} are satisfied.
\item Let $\mathcal{X}$ be a disc in $\mathbb{R}^2$ with diameter one, $d_\mathcal{X}$ the Euclidean distance and $\mu_\mathcal{X}$ the uniform distribution on $\mathcal{X}$. Let $X,X'\overset{i.i.d.}{\sim}\mu_\mathcal{X}$. Then, the density $u_2$ of $d_\mathcal{X}(X,X')$ (see \cite{moltchanov2012distance}, shown in \Cref{fig:doddensities} (a)) is given as
\[u_2(s)= \begin{cases}
\begin{aligned}
8s\left(\frac{2}{\pi}\arccos(s)-\frac{2s}{\pi}\sqrt{1-s^2} \right), 
\end{aligned} &\mathrm{if}~ 0\leq s \leq 1 \\
0 ,& \mathrm{else}.
\end{cases}\]
Once again, we can easily verify \Cref{cond:firstcondition} for any $\beta\in(0,1/2)$ in this setting. 
Additionally, we find by an application of \Cref{lem:Assumptions for unif and Eucl} below with $\epsilon=1/4$, $\eta=2$ and $c_\X=\frac{16}{\pi}$ that also \Cref{cond:secondcondition} is met.
\item[3.] Let $\mathcal{X}=[0,1]^2\cup\left([5,6]\times[0,1]\right)$ be the union of two horizontally translated unit squares in $\mathbb{R}^2$. Once again, let $d_\mathcal{X}$ be the distance induced by the supremum norm and let $\mu_\mathcal{X}$ be the uniform distribution on $\mathcal{X}$. Let $X,X'\overset{i.i.d.}{\sim}\mu_\mathcal{X}$.
Then, the density $u_3$ of $\dX(X,X')$ (see \Cref{fig:doddensities} (b)) is given as
\[u_3(s)=\begin{cases}
	2s^3-6s^2+4s, &\mathrm{if}~ 0\leq s\leq1\\
	\frac{1}{2}s-2, &\mathrm{if}~ 4\leq s< 5 \\
	3-\frac{1}{2}s, &\mathrm{if}~ 5\leq s\leq 6\\
	0, & \mathrm{else}.
	\end{cases}
	\]
	We obtain that $\prob(\dX(X,X')\in[0,1])=\prob(\dX(X,X')\in[4,5])=0.5$, hence there exists no $\beta\in(0,1/2)$ such that $u_3$ is strictly positive on $[C_1,C_2]=[U^{-1}(\beta)-\epsilon,U^{-1}(1-\beta)+\epsilon]$, i.e., \Cref{cond:firstcondition} cannot be satisfied in this setting. This is due to the fact that the set $\mathcal{X}$
	is disconnected such that the diameters of both connected parts are smaller than the gap in between. In such a case the cumulative distribution function of $\dX(X,X')$ is not strictly increasing and thus \Cref{cond:firstcondition} cannot hold. The same arguments show that neither does \Cref{cond:secondcondition}. 
\end{enumerate}
\end{ex}
\begin{rem}
In the above examples we have restricted ourselves to $\X\subset\R^2$ for the ease of readability. Clearly, the same arguments (with more tedious calculations) can be applied to general $\X\subset\R^d$, $d\geq 2$.
\end{rem}
\begin{figure}
	\captionsetup[sub]{justification=centering}
	\centering
	\begin{subfigure}[c]{0.32\textwidth}
		\centering
		\includegraphics[width =0.88 \textwidth,height=0.88\textwidth]{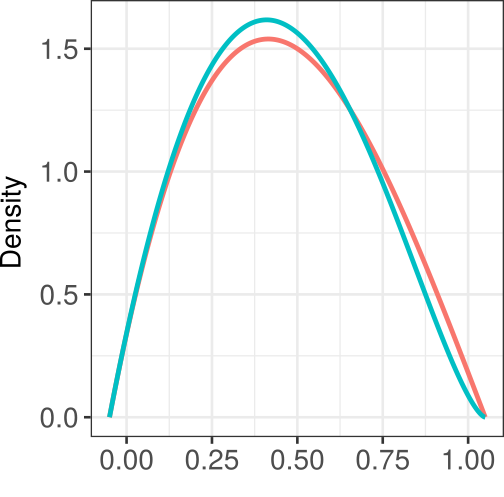}
		\caption*{~~~~~(a)}
	\end{subfigure}
\hspace*{1cm}
	\begin{subfigure}[c]{0.32\textwidth}
		\centering
		\includegraphics[width = 0.85\textwidth, height=0.85\textwidth]{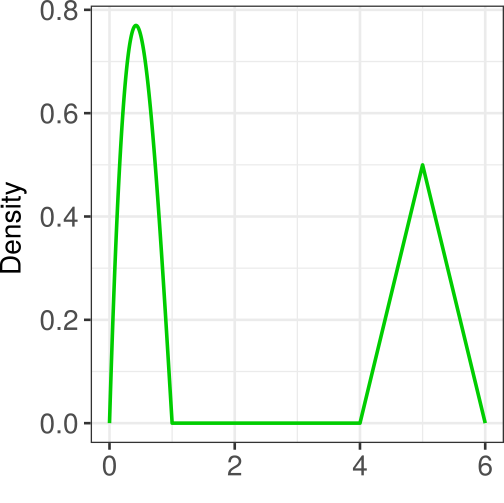}
				\caption*{~~~~~(b)}
	\end{subfigure}
	\caption{\textbf{Distribution of distances:} Representation of the densities $u_1$ (Figure (a), red), $u_2$ (Figure (a), blue) and $u_3$ (Figure (b)) calculated in \Cref{ex:dod3}. }
	\label{fig:doddensities}
\end{figure}
In many applications it is natural to model the objects at hand as compact subsets of $\mathbb{R}^2$ or $\mathbb{R}^3$ and to equip them with the Euclidean metric and the uniform distribution on the respective sets. Hence, the distributions of distances of these metric measure spaces deserve special attention.  Before we can state the next result, which provides simpler conditions than \Cref{cond:firstcondition} and \Cref{cond:secondcondition} in this setting, we have to introduce some notation.\\
Let $A\subset\mathbb{R}^d$, $d\geq 2$ be a bounded Borel set and let  $\lambda^d$ denote the Lebesgue measure in $\mathbb{R}^d$. Let $\mathbb{S}_{d-1}$ stand for the unit sphere in $\Rd$. Then, $y\in \Rd$ is determined by its polar coordinates $(t,v)$, where $t=\left\| y\right\|_2$ and $v\in\mathbb{S}_{d-1}$ is the unit length vector $y/t$. Thus, we define the \textit{covariance function} \cite[Sec. 3.1]{stoyan2008stochastic} for $y=tv\in\Rd$ as
\[K_A(t,v)=K_A(y)=\lambda^d\left( A\cap(A-y)\right),\]
where $A-y=\left\lbrace a-y:a\in A\right\rbrace $, and introduce the \textit{isotropized set covariance function} \cite[Sec. 3.1]{stoyan2008stochastic}
\[k_A(t)=\frac{1}{(\lambda^d(A))^2}\int_{\mathbb{S}_{d-1}}\!K_A(t,v)\,dv.\]
Furthermore, we define the diameter of a metric space $\left(\X,\dX\right)$ as $\diam{\X}=\sup\{\dX(x_1,x_2):x_1,x_2\in\X\} $.
\begin{lemma}\label{lem:Assumptions for unif and Eucl} \ 
	Let $\mathcal{X}\subset\mathbb{R}^d$, $d\geq2$, be a compact Borel set, $\dX$ the Euclidean metric and $\muX$ the uniform distribution on $\mathcal{X}$. Let $\diam{\X}=D$.
	\begin{itemize}[leftmargin=*]
		\item[(i)] If $k_\X$ is strictly positive on $[0,D)$, then the induced metric measure space $\mmspaceX$ meets the requirements of \Cref{cond:firstcondition} for any $\beta\in(0,1/2)$.
		\item[(ii)] If additionally there exists $\epsilon>0$ and $\eta>0$ such that
		\begin{enumerate}
		\item the function $k_\X$ is monotonically decreasing on $(D-\epsilon,D)$;
		\item we have $k_\X(t)\geq c_\X (D-t)^\eta$ for $t\in (D-\epsilon,D)$, where $c_\X$ denotes a finite, positive constant,
		\end{enumerate}
	then $\mmspaceX$ also fulfills the requirements of \Cref{cond:secondcondition}.
	\end{itemize}
\end{lemma}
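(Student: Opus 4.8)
The plan is to reduce both assertions to a single explicit formula for the density $u$ of the distribution of distances in terms of the isotropized set covariance function. First I would observe that, for $X,X'\iid\muX$, the difference $X-X'$ has Lebesgue density $y\mapsto K_{\X}(y)/(\lambda^d(\X))^2$, since this density is precisely the autocorrelation of $\mathds{1}_{\X}$. Passing to polar coordinates $y=tv$, with $dy=t^{d-1}\,dt\,dv$, and integrating over the sphere then yields the key identity
\[u(t)=t^{d-1}k_{\X}(t),\qquad t\in[0,D].\]
Along the way I would record two facts used repeatedly: the set covariance $K_{\X}$ is continuous (being the autocorrelation of the $L^2$-function $\mathds{1}_{\X}$, so continuity of translation in $L^2$ applies), whence $k_{\X}$ and therefore $u$ are continuous on $[0,D]$; and $k_{\X}(0)=\lvert\mathbb{S}_{d-1}\rvert/\lambda^d(\X)>0$.

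For part (i), the identity immediately gives $u(t)=t^{d-1}k_{\X}(t)>0$ for all $t\in(0,D)$ as soon as $k_{\X}>0$ on $[0,D)$. Since $u>0$ on $(0,D)$, the c.d.f. $U$ is strictly increasing there, so for any $\beta\in(0,1/2)$ one has $0<U^{-1}(\beta)\leq U^{-1}(1-\beta)<D$; choosing $\epsilon>0$ small enough that $[C_1,C_2]=[U^{-1}(\beta)-\epsilon,U^{-1}(1-\beta)+\epsilon]\subset(0,D)$ makes $u$ continuous and strictly positive on $[C_1,C_2]$, which is exactly \Cref{cond:firstcondition}.

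For part (ii) I would split the bound on $(U^{-1})'(t)=1/u(U^{-1}(t))$ (inverse function theorem, valid since $u>0$ on $(0,D)$) into the behaviour near the two endpoints, the middle being handled by positivity and continuity of $u$. Near $t=0$, continuity and $k_{\X}(0)>0$ give $u(s)\geq c\,s^{d-1}$ and $U(s)\leq C\,s^{d}$ for small $s$; eliminating $s$ yields $u(s)\geq c'\,U(s)^{(d-1)/d}$, hence $\lvert(U^{-1})'(t)\rvert\leq c_1\,t^{-(d-1)/d}$ with $\gamma_1=-(d-1)/d>-1$. Near $t=1$ the argument is the crux: using the monotonicity assumption (1), I bound $1-U(s)=\int_s^D r^{d-1}k_{\X}(r)\,dr\leq D^{d-1}k_{\X}(s)(D-s)\leq C_1\,u(s)(D-s)$, and then use the lower bound assumption (2), $u(s)\geq c'(D-s)^\eta$, to replace $(D-s)$ by $(u(s)/c')^{1/\eta}$. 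This gives $1-U(s)\leq C_2\,u(s)^{(\eta+1)/\eta}$, i.e. $u(s)\geq C_3\,(1-U(s))^{\eta/(\eta+1)}$, so $\lvert(U^{-1})'(t)\rvert\leq c_2\,(1-t)^{-\eta/(\eta+1)}$ with $\gamma_2=-\eta/(\eta+1)\in(-1,0)$. Combining the three regions and absorbing constants produces the global bound $\lvert(U^{-1})'(t)\rvert\leq c_U\, t^{\gamma_1}(1-t)^{\gamma_2}$ required by \Cref{cond:secondcondition}.

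I expect the endpoint analysis near $t=1$ to be the main obstacle, because the hypotheses on $k_{\X}$ only furnish a one-sided (lower) bound on the density, so a naive matching upper bound on $u$ is unavailable. The resolution is the observation above: monotonicity turns the tail $1-U(s)$ into the product $u(s)(D-s)$, after which the lower bound (2) can be fed back in to eliminate $(D-s)$ entirely and express everything through $u(s)$ and $1-U(s)$. A minor point to verify carefully is the continuity of $u$ up to the endpoint $D$ (so that $U$ is genuinely $C^1$ on its full support), which again follows from continuity of the set covariance function.
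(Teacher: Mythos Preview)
Your proposal is correct and follows the natural route: the identity $u(t)=t^{d-1}k_{\X}(t)$ is the only reasonable bridge between the isotropized set covariance and the density of $\dX(X,X')$, and once it is in hand part~(i) is immediate while part~(ii) reduces to the two endpoint estimates you give. The paper defers the full argument to Section~B.1 of the supplement, but the way the lemma is invoked in \Cref{ex:dod3} (specifying $\epsilon$, $\eta$, $c_{\X}$ for the disc) makes clear that the authors work with exactly this identity, so your approach matches theirs.

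One small remark on presentation: in your endpoint analysis near $t=1$ you pass from $D^{d-1}k_{\X}(s)$ to $C_1\,u(s)$; it is worth making explicit that this step uses $s\geq D-\epsilon>0$ so that $(D/s)^{d-1}$ is bounded, and hence implicitly requires $\epsilon<D$, which is harmless since $\epsilon$ can always be shrunk. Otherwise the argument is complete.
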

The full proof of the above lemma is deferred to Section B.1 of \citet{supplement}.
\subsection{The Case \texorpdfstring{\bm{$\truedodstat=0$}}{Case 1}}\label{subsec:first case} Throughout this subsection we assume that the distribution of distances of the two considered metric measure spaces $\mmspaceX$ and $\mmspaceY$ are equal, i.e., that $\muU=\mu^V$. Assume that $X_1,\dots,X_n\iid\muX$ and $Y_1,\dots,Y_m\iid\muY$ are two independent samples. The next theorem states that $\dodstat$, based on these samples, converges, appropriately scaled, in distribution to the integral of a squared Gaussian process. The case $\beta\in(0,1/2)$ is considered in part $(i)$, whereas the case $\beta=0$ is considered in part $(ii)$. 
\begin{thm}\label{thm:main2} Assume \Cref{cond:setting} and suppose that $\muU=\mu^V$.\begin{itemize}[leftmargin=*]
\item[(i)] Let \Cref{cond:firstcondition} be met and let $m,n\to\infty$ such that ${n}/({n+m})\to\lambda\in \left(0,1 \right) $. Then, it follows
	\begin{align*}
	\frac{nm}{n+m}\int_{\beta}^{1-\beta}\!\left(U_n^{-1}(t)-V_m^{-1}(t)\right)^2 \, dt\rightsquigarrow\Xi\coloneqq\int_{\beta}^{1-\beta}\!\mathbb{G}^2(t) \, dt,
	\end{align*}
	where $\mathbb{G}$ is a centered Gaussian process with covariance
	\begin{equation}\label{eq:definition of the covariance}
	\Cov\left(\mathbb{G}(t),\mathbb{G}(t') \right)\!=\! \frac{4}{(u\circ U^{-1}(t))(u\circ U^{-1}(t'))}\Gamma_{\dX}(U^{-1}(t),U^{-1}(t')). \\ 
	\end{equation}
	Here,
	\begin{align*}
	\Gamma_{\dX}\left(t,t'\right)= \int\!\int\!\mathds{1}_{\left\lbrace d_\mathcal{X}(x,y)\leq t\right\rbrace }\,d\muX(y)\int\!\mathds{1}_{\left\lbrace d_\mathcal{X}(x,y)\leq t'\right\rbrace }\,d\muX(y)\,d\muX(x)\\
	-\int\!\int \mathds{1}_{\left\lbrace d_\mathcal{X}(x,y)\leq t\right\rbrace }\,d\muX(y)\,d\muX(x)\int\!\int \!\mathds{1}_{\left\lbrace d_\mathcal{X}(x,y)\leq t'\right\rbrace }\,d\muX(y)\,d\muX(x). 
	\end{align*}
	\item[(ii)] If we assume \Cref{cond:secondcondition} instead of \Cref{cond:firstcondition}, then the analogous statement holds for the untrimmed version, i.e., for $\beta =0$.
		\end{itemize}
\end{thm}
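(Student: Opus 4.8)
The plan is to prove (i) by combining a functional central limit theorem for the empirical $U$-process $\sqrt n(U_n-U)$ with the functional delta method for the inversion map $\phi_{inv}$, and then to reduce (ii) to (i) by an approximation argument that views the statistic as a process indexed in $\beta$. For (i), note that $U_n$ is a $U$-statistic empirical process with symmetric kernel $h_t(x,x')=\indifunc{\dX(x,x')\le t}$ indexed by $t\in[C_1,C_2]$; its Hoeffding decomposition isolates the linear term $\frac{2}{n}\sum_i g_t(X_i)$, where $g_t(x)=\int\indifunc{\dX(x,y)\le t}\,d\muX(y)-U(t)$ has covariance $\Cov(g_t(X),g_{t'}(X))=\Gamma_{\dX}(t,t')$, while the degenerate remainder is $O_P(1/\sqrt n)$ and hence negligible after scaling by $\sqrt n$. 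Establishing $\sqrt n(U_n-U)\rightsquigarrow\G_X$ in $\ell^\infty[C_1,C_2]$, a centered Gaussian process with covariance $4\Gamma_{\dX}$, requires showing that the kernel class $\{h_t\}$ is of VC type so that the $U$-process is asymptotically equicontinuous; this is where the dependence among the pairwise distances is handled and is the core probabilistic input. The same argument gives $\sqrt m(V_m-V)\rightsquigarrow\G_Y$, independent of $\G_X$.

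Under \Cref{cond:firstcondition} the density $u$ is continuous and strictly positive on $[C_1,C_2]$, so $\phi_{inv}$ is Hadamard differentiable and the delta method yields $\mathbb{U}_n^{-1}\rightsquigarrow-\G_X(U^{-1}(\cdot))/(u\circ U^{-1}(\cdot))$ in $\ell^\infty[\beta,1-\beta]$, and analogously for $\mathbb{V}_m^{-1}$. Since $\muU=\mu^V$ forces $U^{-1}=V^{-1}$, I would write $\sqrt{\tfrac{nm}{n+m}}(U_n^{-1}-V_m^{-1})=\sqrt{\tfrac{m}{n+m}}\,\mathbb{U}_n^{-1}-\sqrt{\tfrac{n}{n+m}}\,\mathbb{V}_m^{-1}$, which, using $n/(n+m)\to\lambda$ and the independence of the two samples, converges weakly to the single centered Gaussian process $\G=\sqrt{1-\lambda}\,\G_X(U^{-1}(\cdot))/(u\circ U^{-1}(\cdot))-\sqrt{\lambda}\,\G_Y(V^{-1}(\cdot))/(v\circ V^{-1}(\cdot))$. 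Its covariance is the $\lambda$-convex combination of the two quantile-process covariances; as the weights sum to one and the two kernels agree under the hypothesis, it equals exactly \eqref{eq:definition of the covariance}. Applying the continuous mapping theorem to the map $g\mapsto\int_\beta^{1-\beta}g^2(t)\,dt$, continuous on $\ell^\infty[\beta,1-\beta]$, gives $\tfrac{nm}{n+m}\dodstat\rightsquigarrow\int_\beta^{1-\beta}\G^2=\Xi$.

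For (ii) the difficulty is that under \Cref{cond:secondcondition} the densities may vanish at the endpoints of their support, so $1/(u\circ U^{-1}(t))=(U^{-1})'(t)$ blows up as $t\to0,1$ and $\mathbb{U}_n^{-1}$ need not converge in $\ell^\infty(0,1)$; the delta-method route of (i) is unavailable at $\beta=0$. Instead I would regard $\beta\mapsto\Xi_n(\beta):=\tfrac{nm}{n+m}\int_\beta^{1-\beta}(U_n^{-1}-V_m^{-1})^2\,dt$ as a process and pass from $\beta>0$ to $\beta=0$ by a converging-together argument: part (i) already supplies $\Xi_n(\beta)\rightsquigarrow\Xi(\beta)$ for every $\beta\in(0,1/2)$, while the growth bound $|(U^{-1})'(t)|\le c_U t^{\gamma_1}(1-t)^{\gamma_2}$ of \Cref{cond:secondcondition} guarantees $\int_0^1\G^2<\infty$ almost surely, so that $\Xi(\beta)\to\Xi(0)$ as $\beta\to0$.

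The hard part will be the remaining step: showing the boundary contribution is asymptotically negligible, i.e. $\lim_{\beta\to0}\limsup_{n,m}\prob\big(\tfrac{nm}{n+m}\int_0^\beta(U_n^{-1}-V_m^{-1})^2\,dt>\varepsilon\big)=0$ and symmetrically near $1$. By Markov's inequality it suffices to bound $\E\big[\tfrac{nm}{n+m}\int_0^\beta(U_n^{-1}-V_m^{-1})^2\,dt\big]$ and show it vanishes as $\beta\to0$ uniformly in $n,m$. Under the hypothesis $U^{-1}=V^{-1}$, so this reduces to second-moment control of the empirical $U$-quantile process $\mathbb{U}_n^{-1}(t)$ for $t$ near $0$, where the exploding reciprocal density is tamed by the weight $t^{\gamma_1}$ furnished by \Cref{cond:secondcondition}. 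Here one needs $U$-statistic analogues of the weighted quantile-process moment inequalities of \citet{mason1984weak}; establishing these uniform near-boundary bounds while accounting for the dependence among the $\{\dX(X_i,X_j)\}$ — rather than any single central limit theorem — is the genuine technical obstacle of the theorem.
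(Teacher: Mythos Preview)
Your plan for part (i) matches the paper exactly: weak convergence of the empirical $U$-process $\sqrt{n}(U_n-U)$ in $\ell^\infty[C_1,C_2]$ via the Hoeffding decomposition, Hadamard differentiability of $\phi_{inv}$ into $\ell^\infty[\beta,1-\beta]$, and the continuous mapping theorem for $g\mapsto\int_\beta^{1-\beta}g^2$.

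For part (ii) your high-level strategy also coincides with the paper's---view the statistic as a process in $\beta$ and control the boundary contribution---though the paper phrases this as tightness of $\beta\mapsto\Xi_n^{U,V}(\beta)$ in $C[0,1/2]$ (via Billingsley's criterion on the modulus of continuity) rather than a converging-together argument; the required ingredients are the same. Where you leave the crucial step open, invoking ``$U$-statistic analogues of the weighted quantile-process moment inequalities of Mason--Shorack'', the paper supplies a concrete device you did not anticipate: it partitions the index set $\{(i,j):1\le i<j\le n\}$ into $n-1$ (resp.\ $n$, for $n$ odd) perfect matchings $\Pi_k^n$, so that within each $\Pi_k^n$ the distances $\{d_\X(X_i,X_j):(i,j)\in\Pi_k^n\}$ are genuinely i.i.d.\ (\Cref{lem:Ustatsamplesplitup}, essentially a proper edge-coloring of $K_n$). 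Restricting the transport to block-diagonal couplings then bounds the dependent Kantorovich cost by an average over independent blocks, to each of which the i.i.d.\ moment bounds of Bobkov--Ledoux apply directly. This single combinatorial reduction delivers both the finite-sample bound of \Cref{thm:finite sample bound for p=2} (giving tightness of $\Xi_n^{U,V}(0)$) and the near-boundary estimates of \Cref{lemma:integrated difference for small beta} that feed into the modulus-of-continuity control (\Cref{lemma:Stetigkeitsmodul}). So your diagnosis of where the obstacle lies is accurate, but the resolution is not an adaptation of weighted quantile-process inequalities to the $U$-statistic setting; it is a combinatorial reduction to the i.i.d.\ case.
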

The main ideas for the proof of \Cref{thm:main2} are illustrated in \Cref{appendix:proofs} and the full proof can be found in the supplementary material \cite[Sec. B.2]{supplement}.
\begin{ex}\label{ex:explixitvar}
	Recall \Cref{ex:dod3}, i.e., $\mathcal{X}$ is the unit square in $\mathbb{R}^2$, $d_\mathcal{X}$ is the distance induced by the supremum norm and $\mu_\mathcal{X}$ is the uniform distribution on $\mathcal{X}$. Let $X,X'\iid\mu_\mathcal{X}$. From \eqref{eq:example quantile function}, we obtain
	\[U_1(t)=\prob\left(\left\| X-X'\right\|_\infty\leq t\right)=\begin{cases}
	0, &\mathrm{if}~t\leq 0\\
	\left(2t-t^2 \right)^2,&\mathrm{if}~ 0\leq t\leq 1\\
	1, & t\geq1. \end{cases}\]
	Hence, in order to obtain an explicit expression of the covariance structure in the present setting, it remains to determine the first term of $\Gamma_{\dX}$	\begin{align*}
	\Gamma_{\dX,1}(t,t')&\coloneqq\int\!\int\!\mathds{1}_{\left\lbrace d_\mathcal{X}(x,y)\leq t\right\rbrace }\,d\muX(y)\int\!\mathds{1}_{\left\lbrace d_\mathcal{X}(x,y)\leq t'\right\rbrace }\,d\muX(y)\,d\muX(x)\\
&=\begin{cases}
	\left(-\frac{1}{3}t'^3-t'^2t-2t't^2+4t't\right)^2, &\mathrm{if}~ t'\leq t<1/2\\
	\left(-\frac{1}{3}t'^3-t'^2t-2t't^2+4t't\right)^2 ,&\!\begin{aligned}\mathrm{if}~& t'<1/2\leq t,\\~& t'\leq 1-t \end{aligned} \\
	\left(-(t'-t)^2-t't^2+t'+\frac{1}{3}t^3+t-\frac{1}{3}\right)^2, &\!\begin{aligned}\mathrm{if}~& t'<1/2\leq t,\\~& t'> 1-t\end{aligned}\\
	\left(-(t'-t)^2-t't^2+t'+\frac{1}{3}t^3+t-\frac{1}{3}\right)^2, &\mathrm{if}~ 1/2\leq t'\leq t\leq 1. \end{cases}
	\end{align*}
\end{ex}
Based on the limit distribution derived in \Cref{thm:main2} it is possible to construct an asymptotic level $\alpha$ test using (estimates of) the theoretical $1-\alpha$ quantiles of $\Xi$, denoted as $\xi_{1-\alpha}$, in \eqref{eq:decision rule}. However, in order to study its finite sample bias, the following bound is helpful (for its proof see \Cref{subsec:proof of the finite sample boud} and \citet[Sec. B.3]{supplement}).
\begin{thm}\label{thm:finite sample bound for p=2}
	Let $\beta \in [0,1/2)$, let \Cref{cond:setting} be met and suppose that $\mu^U=\mu^V$. Further, let \[J_2\left(\mu^U\right) =\int_{-\infty}^{\infty}\! \frac{ U(t)(1-U(t)) }{u(t)}\,dt<\infty.\]
	Then it holds for $m,n\geq 3$ that
	\[\Eargs{\dodstatnn}\leq \left( \frac{8}{n+1}+\frac{8}{m+1}\right) J_2(\muU).\]
\end{thm}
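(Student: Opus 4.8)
The plan is to pass to the scale of the distribution functions, where the two estimators are \emph{unbiased}, and to exploit the independence of the samples there. Since $\beta\ge 0$, the trimmed integrand is supported on a subinterval of $[0,1]$, so $\dodstatnn\le\int_0^1\left(U_n^{-1}(t)-V_m^{-1}(t)\right)^2dt$ and it suffices to bound the expectation of this untrimmed quantity. The first ingredient I would use is a deterministic one-dimensional transport inequality against the common law: under $\mu^U=\mu^V$ both distributions of distances share the continuously differentiable c.d.f.\ $U=V$ with density $u$, and for \emph{any} two c.d.f.'s $G,H$ one has
\[\int_0^1\left(G^{-1}(t)-H^{-1}(t)\right)^2dt\le\int_{-\infty}^{\infty}\frac{\left(G(x)-H(x)\right)^2}{u(x)}\,dx.\]
This is the quadratic analogue of the elementary $\mathcal{K}_1$--identity $\int_0^1|G^{-1}-H^{-1}|\,dt=\int|G-H|\,dx$; it holds essentially because $u$ integrates to one (a Cauchy--Schwarz argument on level sets bounds $\int dx/u$ against $(\Delta x)^2$), and I would either cite it as a folklore auxiliary result from \cite{supplement} or prove it via the substitution $t=U(x)$ and convexity. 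Applying it pathwise with $G=U_n$ and $H=V_m$ reduces the problem to the c.d.f.\ scale.

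Taking expectations (Tonelli, nonnegative integrand) gives $\E\int_0^1(U_n^{-1}-V_m^{-1})^2dt\le\int_{-\infty}^\infty u(x)^{-1}\,\E\big[(U_n(x)-V_m(x))^2\big]\,dx$. The key simplification is that, at the level of the distribution functions, $U_n(x)$ and $V_m(x)$ are unbiased: by \eqref{eq:def of U_n} each is a $U$-statistic with $\E[U_n(x)]=U(x)=V(x)=\E[V_m(x)]$. Since the two samples are independent, the cross term vanishes \emph{exactly}, and $\E[(U_n(x)-V_m(x))^2]=\Var(U_n(x))+\Var(V_m(x))$ with no residual bias contribution. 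This is precisely what avoids the factor-two loss of a crude $(a-b)^2\le 2a^2+2b^2$ split and lets one reach the stated constant.

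It remains to control the variance of each $U$-statistic despite the strong dependence among the $\binom n2$ pairwise distances, which is the crux. I would use Hoeffding's variance formula for a degree-two $U$-statistic,
\[\Var(U_n(x))=\binom{n}{2}^{-1}\Big(2(n-2)\,\Gamma_{\dX}(x,x)+U(x)\big(1-U(x)\big)\Big),\]
where $\Gamma_{\dX}(x,x)$ is the diagonal of the covariance kernel of \Cref{thm:main2} (the variance of the first H\'ajek projection) and $U(x)(1-U(x))$ is the variance of the full indicator kernel. The law of total variance gives the projection inequality $\Gamma_{\dX}(x,x)\le U(x)(1-U(x))$, whence for $n\ge 3$ one obtains the pointwise estimate $\Var(U_n(x))\le \tfrac{4}{n}\,U(x)(1-U(x))$, and likewise $\Var(V_m(x))\le\tfrac{4}{m}\,U(x)(1-U(x))$. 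Substituting and recognising $\int_{-\infty}^\infty u(x)^{-1}U(x)(1-U(x))\,dx=J_2(\muU)$ yields $\E\dodstatnn\le\big(\tfrac4n+\tfrac4m\big)J_2(\muU)$, and the elementary inequalities $\tfrac4n\le\tfrac{8}{n+1}$, $\tfrac4m\le\tfrac8{m+1}$ deliver the asserted bound.

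The main obstacle is this variance step: unlike an i.i.d.\ empirical c.d.f., for which $\E[(F_n(x)-F(x))^2]=F(x)(1-F(x))/n$ is immediate, here $U_n(x)$ is built from dependent summands, so one must genuinely exploit the $U$-statistic structure --- the Hoeffding decomposition together with the projection bound $\Gamma_{\dX}(x,x)\le U(x)(1-U(x))$ --- to recover the correct $1/n$ rate with an explicit constant. A secondary technical point is justifying the transport inequality and the interchange of expectation and integration when $u$ vanishes at the endpoints of the support of $\muU$; here the hypothesis $J_2(\muU)<\infty$ is exactly what guarantees that the boundary contributions remain integrable.
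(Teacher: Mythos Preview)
Your strategy is different from the paper's, and the first step contains a genuine gap: the pathwise transport inequality
\[
\int_0^1\bigl(G^{-1}(t)-H^{-1}(t)\bigr)^2\,dt\;\le\;\int_{-\infty}^{\infty}\frac{(G(x)-H(x))^2}{u(x)}\,dx
\]
is \emph{false} as stated. Take $U(x)=x^2$ on $[0,1]$ with density $u(x)=2x$, let $G=U$ and let $H$ be the c.d.f.\ of the Dirac mass at $1$. Then the left side equals $\int_0^1(\sqrt t-1)^2\,dt=1/6$, while the right side equals $\int_0^1 x^4/(2x)\,dx=1/8$, so the inequality fails. The Cauchy--Schwarz argument you sketch in fact produces a \emph{different} bound: writing $G^{-1}(t)-H^{-1}(t)=\int\bigl(\indifunc{G(x)<t}-\indifunc{H(x)<t}\bigr)\,dx$ and applying Cauchy--Schwarz with weight $u$ yields, after Fubini,
\[
\int_0^1\bigl(G^{-1}-H^{-1}\bigr)^2\,dt\;\le\;\int\frac{|G(x)-H(x)|}{u(x)}\,dx,
\]
which is linear in $|G-H|$, not quadratic. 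Consequently, after taking expectations you are left with $\E|U_n(x)-V_m(x)|$ rather than the variance, and the unbiasedness/independence decoupling that drives the rest of your argument no longer applies. Your Hoeffding variance computation and the projection bound $\Gamma_{\dX}(x,x)\le U(x)(1-U(x))$ are correct, but they are never reached.

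The paper proceeds along an entirely different route and never passes to the c.d.f.\ scale. It stays on the quantile side: after a triangle inequality reducing to $\E\bigl[\Wtwo{\muUn}{\muU}\bigr]$ and a symmetrisation $\E\bigl[\Wtwo{\muUn}{\muU}\bigr]\le\E\bigl[\Wtwo{\muUn}{\nuUn}\bigr]$ with an independent copy $\nuUn$, the key idea is combinatorial. The index set $\{(i,j):1\le i<j\le n\}$ is partitioned into roughly $n$ groups of size roughly $n/2$ such that the distances $\{\dX(X_i,X_j)\}$ within each group are \emph{independent} (a $1$-factorisation of the complete graph $K_n$; this is \Cref{lem:Ustatsamplesplitup}). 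An assignment argument then bounds $\Wtwo{\muUn}{\nuUn}$ by the average over groups of the sorted-distance matchings, and each group, now consisting of i.i.d.\ observations, is controlled by the Bobkov--Ledoux i.i.d.\ bound $\E\bigl[\Wtwo{\mu_N^*}{\nu_N^*}\bigr]\le C(N+1)^{-1}J_2(\muU)$. It is this combinatorial decoupling---not a variance formula---that handles the dependence among the $\binom n2$ pairwise distances.
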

 For instance in the setting of \Cref{ex:explixitvar} it holds $J_2\left(\mu^U\right)=\frac{5}{48}<\infty$ and thus $\Eargs{\dodstatnn}\leq \frac{5}{6}\left( \frac{1}{n+1}+\frac{1}{m+1} \right) $ for $m,n\geq 3$.
\subsection{The Case \texorpdfstring{$\bm{\truedodstat>0$}}{Case 2}} \label{subsec:secondcase}
 In this subsection, we are concerned with the behavior of $\dodstat$ given that the distributions of distances of the metric measure spaces $\mmspaceX$ and $\mmspaceY$ do not coincide.
Just as for \Cref{thm:main2}, we distinguish the cases $\beta\in(0,1/2)$ and $\beta = 0$. 
\begin{thm}\label{thm:main} Assume \Cref{cond:setting}.
	\begin{itemize}[leftmargin=*]
	\item[(i)] Assume that \Cref{cond:firstcondition} holds, let $m,n\to\infty$ such that $\frac{n}{n+m}\to\lambda\in \left(0,1 \right) $ and let $\truetruncateddod> 0$.
	Then, it follows that
		\[\sqrt{\frac{nm}{n+m}}\left(\dodstat-\truetruncateddod \right)\]
		converges in distribution to a normal distribution with mean 0 and variance
		\begin{align*}
		&16\lambda\!\!\!\!\!\!\!\int\limits_{U^{-1}(\beta)}^{U^{-1}(1-\beta)}\int\limits_{U^{-1}(\beta)}^{U^{-1}(1-\beta)}\!\!\!\!\!\!\!\!(x-V^{-1}(U(x)))(y-V^{-1}(U(y)))\Gamma_{\dX}\left(x,y\right) dxdy\\
		+&16(1-\lambda)\!\!\!\!\!\!\!\int\limits_{V^{-1}(\beta)}^{V^{-1}(1-\beta)}\int\limits_{V^{-1}(\beta)}^{V^{-1}(1-\beta)}\!\!\!\!\!\!\!\!(U^{-1}(V(x))-x))(U^{-1}(V(y))-y)\Gamma_{\dY}\left(x,y\right) dxdy\label{eq:maincorvar}.
		\end{align*}
		Here, $\Gamma_{\dX}\left(x,y\right)$ is as defined in \Cref{thm:main2} and $\Gamma_{\dY}\left(x,y\right)$ is defined analogously.
		\item[(ii)] If we assume \Cref{cond:secondcondition} instead of \Cref{cond:firstcondition}, then the analogous statement holds for the untrimmed version, i.e., for $\beta =0$.
	\end{itemize}
\end{thm}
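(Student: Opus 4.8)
The plan is to realize $\dodstat$ as a smooth functional of the empirical quantile functions and to invoke the functional delta method. Write $\Psi(f,g)=\int_\beta^{1-\beta}\bigl(f(t)-g(t)\bigr)^2\,dt$, so that $\dodstat=\Psi(U_n^{-1},V_m^{-1})$ and $\truetruncateddod=\Psi(U^{-1},V^{-1})$. The decisive structural difference from the null case of \Cref{thm:main2} is that, since $\truetruncateddod>0$, the difference $U^{-1}-V^{-1}$ does not vanish identically on $[\beta,1-\beta]$; hence $\Psi$ possesses a \emph{non-degenerate} linear (first-order) Hadamard derivative at $(U^{-1},V^{-1})$, namely
\[
(h_1,h_2)\longmapsto 2\int_\beta^{1-\beta}\bigl(U^{-1}(t)-V^{-1}(t)\bigr)\bigl(h_1(t)-h_2(t)\bigr)\,dt .
\]
This is precisely why the correct scaling is $\sqrt{nm/(n+m)}$ and the limit is Gaussian, in contrast to the $nm/(n+m)$ scaling and the integral-of-squared-Gaussian limit obtained when the first derivative degenerates.

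The two ingredients I would assemble are the joint weak convergence of the empirical $U$-quantile processes and the chain rule for Hadamard-differentiable maps. For the first, observe that $U_n(t)$ is a $U$-statistic of order two with the bounded kernel $\indifunc{\dX(x,x')\leq t}$, and that the family indexed by $t\in\R$ is of VC type. The Hoeffding decomposition then shows that the degenerate second-order term is $o_P(n^{-1/2})$ uniformly in $t$, so that
\[
\sqrt{n}\bigl(U_n(t)-U(t)\bigr)=\frac{2}{\sqrt n}\sum_{i=1}^n\bigl(g_t(X_i)-U(t)\bigr)+o_P(1),\qquad g_t(x)\coloneqq\int\indifunc{\dX(x,y)\leq t}\,d\muX(y),
\]
uniformly over $t$; consequently $\sqrt n(U_n-U)$ converges to a centered Gaussian process with covariance $4\,\Gamma_{\dX}$ (the factor $4$ coming from the $2$ in the first-order projection, and $\Gamma_{\dX}$ being exactly the covariance of $g_t(X)$ appearing in \Cref{thm:main2}). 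Under \Cref{cond:firstcondition} the density is continuous and strictly positive, hence bounded away from zero, on a neighbourhood of $[U^{-1}(\beta),U^{-1}(1-\beta)]$, so the inversion map $\phi_{inv}\colon F\mapsto F^{-1}$ is Hadamard differentiable into $\ell^\infty[\beta,1-\beta]$ and the delta method yields $\sqrt n(U_n^{-1}-U^{-1})\rightsquigarrow\mathbb{G}_U$ with the covariance \eqref{eq:definition of the covariance}; the analogous statement holds for $V$, and the two limits $\mathbb{G}_U,\mathbb{G}_V$ are independent because the samples are.

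Combining these via the chain rule gives
\[
\sqrt{\tfrac{nm}{n+m}}\bigl(\dodstat-\truetruncateddod\bigr)\rightsquigarrow 2\int_\beta^{1-\beta}\bigl(U^{-1}(t)-V^{-1}(t)\bigr)\Bigl(\sqrt{\tfrac{m}{n+m}}\,\mathbb{G}_U(t)-\sqrt{\tfrac{n}{n+m}}\,\mathbb{G}_V(t)\Bigr)\,dt ,
\]
where the weights $\sqrt{m/(n+m)}\to\sqrt{1-\lambda}$ and $\sqrt{n/(n+m)}\to\sqrt{\lambda}$ arise from splitting the pooled normalization across the two independent samples. The right-hand side is a centered Gaussian, hence normal with mean $0$. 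To identify the variance I would expand it using the independence of $\mathbb{G}_U$ and $\mathbb{G}_V$, which produces two additive terms carrying the weights $\lambda$ and $1-\lambda$, and then perform the substitution $x=U^{-1}(t)$ (so that $dt=u(x)\,dx$, cancelling one of the two factors $u\circ U^{-1}$ in the covariance) in the $\mathbb{G}_U$-term and $x=V^{-1}(t)$ in the $\mathbb{G}_V$-term. These substitutions convert the trimming limits $\beta,1-\beta$ into $U^{-1}(\beta),U^{-1}(1-\beta)$ (respectively $V^{-1}(\beta),V^{-1}(1-\beta)$) and the integrands into $\bigl(x-V^{-1}(U(x))\bigr)\bigl(y-V^{-1}(U(y))\bigr)\Gamma_{\dX}(x,y)$ (respectively $\bigl(U^{-1}(V(x))-x\bigr)\bigl(U^{-1}(V(y))-y\bigr)\Gamma_{\dY}(x,y)$), reproducing the stated formula.

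For part (ii), i.e.\ $\beta=0$, the same strategy applies but the functional-analytic framework must be modified, and this is the main obstacle. When the densities vanish at the endpoints of the support, $\sqrt n(U_n^{-1}-U^{-1})$ need not converge in $\ell^\infty(0,1)$, so the Hadamard differentiability of $\phi_{inv}$ into $\ell^\infty$ used above is unavailable. Instead I would show, under \Cref{cond:secondcondition}, that $\phi_{inv}$ is Hadamard differentiable as a map into $\ell^1(0,1)$ and that the quantile processes converge weakly in $\ell^1(0,1)$; since compactness of the spaces makes $U^{-1}-V^{-1}$ bounded, the linear functional $h\mapsto 2\int_0^1(U^{-1}-V^{-1})\,h$ is continuous on $\ell^1(0,1)$, so the chain rule still delivers \eqref{eq:statement2} with $\beta=0$. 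The technical heart is the endpoint control: one must establish $\ell^1$-tightness of the quantile process near $0$ and $1$ in spite of the vanishing density, and this is exactly where the growth bound $\lvert(U^{-1})'(t)\rvert\leq c_U t^{\gamma_1}(1-t)^{\gamma_2}$ with $\gamma_1,\gamma_2>-1$ is used, guaranteeing integrability of the limit and uniform integrability of the prelimit processes. Throughout, the genuinely novel difficulty relative to the classical i.i.d.\ quantile central limit theorem is the dependence among the pairwise distances $\{\dX(X_i,X_j)\}$, which is what forces the $U$-process arguments in the first step.
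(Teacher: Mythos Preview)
Your proposal is correct and follows essentially the same route as the paper: weak convergence of the empirical $U$-quantile processes (via Hadamard differentiability of $\phi_{inv}$ into $\ell^\infty[\beta,1-\beta]$ for part~(i), and into $\ell^1(0,1)$ under \Cref{cond:secondcondition} for part~(ii)), combined with the functional delta method applied to the non-degenerate linear derivative of $\Psi$, exactly as in the paper's sketch in \Cref{subsec:proof of thm:main2} and Lemma~A.5. One bookkeeping point to double-check: tracing your own derivation, the factor $\sqrt{m/(n+m)}\to\sqrt{1-\lambda}$ sits in front of $\mathbb G_U$, so the $\Gamma_{\dX}$ term should carry the weight $16(1-\lambda)$ rather than $16\lambda$---make sure you reconcile this with the printed statement before claiming to ``reproduce the stated formula''.
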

The proof of \Cref{thm:main} is sketched in \Cref{appendix:proofs}. A detailed version is given in Section B.4 of \cite{supplement}.
\begin{rem}\label{rem:maintheoremextraassumption}
	The assumptions of \Cref{thm:main} $(i)$ include that $\beta$ is chosen such that
	\begin{equation*}\label{assump:slb}
	\truetruncateddod> 0.
	\end{equation*}
	 Suppose on the other hand that $\muU\neq\mu^V$, but $\truetruncateddod= 0$, i.e., their quantile functions agree Lebesgue almost surely on the considered interval $[\beta,1-\beta]$. Then, the limits found in \Cref{thm:main} are degenerate and it is easy to verify along the lines of the proof of \Cref{thm:main2} that $\dodstat$ exhibits the same distributional limit as in the case $\truedodstat=0$.
\end{rem}
\begin{rem}
	So far we have restricted ourselves to the case $p=2$. However, most of our findings directly translate to results for the statistic $\untruncateddodstat_p$, $p\in[1,\infty)$, defined in \eqref{eq:introlowerboundempversion}. Using the same ideas one can directly derive \Cref{thm:main2} and \Cref{thm:finite sample bound for p=2} for (a trimmed version) of $\untruncateddodstat_p$ (see Sections B.2 and B.3 of \citet{supplement}) under slightly different assumptions. Only the proof of \Cref{thm:main} requires more care (see \cite[Sec. B.4]{supplement}).
\end{rem}
\section{Bootstrapping the Quantiles}\label{sec:bootsrapping the quantiles}
The quantiles of the limit distribution of $\dodstat$ under $H_0$ depend on the unknown distribution $U$ and are therefore in general not accessible. One possible approach, which is quite cumbersome, is to estimate the covariance matrix of the Gaussian limit process $G$ from the data and use this to approximate the quantiles required. Alternatively, we suggest to directly bootstrap the quantiles of the limit distribution of $\dodstat$ under $H_0$. To this end, we define and investigate the bootstrap versions of $U_n$, $U_n^{-1}$ and $\mathbb{U}_n^{-1}\coloneqq \sqrt{n}\left( U_n^{-1}-U^{-1}\right)$.

\noindent Let $\mu_n$ denote the empirical measure based on the sample $X_1,\dots,X_n$. Given the sample values, let ${X}_1^*,\dots,{X}_{n_B}^*$ be an independent identically distributed sample of size $n_B$ from $\mu_n$. Then, the bootstrap estimator of $U_n$ is defined as 
\begin{align*}
&{U}_{n_B}^*(t):= \frac{2}{n_B(n_B-1)}\sum_{1\leq i<j\leq n_B}\mathds{1}_{\left\lbrace d_\mathcal{X}({X}_i^*,{X}_j^*)\leq t\right\rbrace},
\end{align*}
the corresponding bootstrap empirical $U$-process is for $t\in\R$ given as ${\mathbb{U}}_{n_B}^*(t)=\sqrt{n_B}\big({U}_{n_B}^*(t)-U_n(t) \big) $ and the corresponding bootstrap quantile process for $t\in(0,1)$ as $\left( {\mathbb{U}}_{n_B}^*\right) ^{-1}(t)=\sqrt{n_B}\Big(\big( {U}_{n_B}^*\big) ^{-1}(t)-U_n^{-1}(t) \Big)$.\\
\noindent One can easily verify along the lines of the proof of \Cref{thm:main2} that for $n\to\infty$ it also holds for $\beta\in(0,1/2)$
\begin{equation}\label{eq:bootstrapideabasis}
\int_{\beta}^{1-\beta}\!\left( \mathbb{U}_n^{-1}(t)\right)^2 \,dt\rightsquigarrow\Xi=\int_{\beta}^{1-\beta}\!\mathbb{G}^2(t) \,dt.
\end{equation}
Hence, this suggests to to approximate the quantiles of $\Xi$ by the quantiles of its bootstrapped version
\begin{equation}
{\Xi}_{n_B}^*\coloneqq\int_{\beta}^{1-\beta}\!\left(\left( {\mathbb{U}}_{n_B}^*\right) ^{-1}(t)\right)^2 \,dt.\label{eq:defofhatXinun}\end{equation}
	Let $\beta\in(0,1/2)$, suppose that \Cref{cond:firstcondition} holds, let $\sqrt{n_B}=o(n)$ and let $\xi_{n_B,\alpha}^{(R)}$ denote the empirical bootstrap quantile of $R$ independent bootstrap realizations ${\Xi}_{n_B}^{*,(1)},\dots,{\Xi}_{n_B}^{*,(R)}$. Under these assumptions, we derive (cf. Section C of the supplement \cite{supplement}) that for any $\alpha\in(0,1)$ it follows
\begin{equation}\label{eq:bootstrapconsitency}\lim_{n,n_B,R\to\infty}\prob\left(\int_{\beta}^{1-\beta}\!\left(\mathbb{U}_n^{-1}(t)\right)^2\,dt \geq \xi_{n_B,\alpha}^{(R)} \right)=\alpha. \end{equation}

Because of \eqref{eq:bootstrapideabasis} the statement \eqref{eq:bootstrapconsitency} guarantees the consistency of $\xi_{n_B,\alpha}^{(R)}$ for $n,n_B,R\to\infty$. Hence, a consistent bootstrap analogue of the test defined by the decision rule \eqref{eq:decision rule} is for $\beta\in(0,1/2)$ given by the \textit{bootstrapped Distribution of Distances (DoD)}-test 
\begin{equation}\label{eq:bootstraptest}\Phi_{DoD}^*(\mathcal{X}_n,\mathcal{Y}_m)=\begin{cases}
1, &\text{if}~\frac{nm}{n+m}\dodstat>\xi_{n_B,1-\alpha}^{(R)}\vspace*{5mm}\\
0, &\text{if}~\frac{nm}{n+m}\dodstat\leq \xi_{n_B,1-\alpha}^{(R)}.
\end{cases}\end{equation}
\section{Simulations}\label{sec:simulations}
We investigate the finite sample behavior of $\dodstatnn$ in Monte Carlo simulations. To this end, we simulate the speed of convergence of $\dodstatnn$ under $H_0$ to its limit distribution (\Cref{thm:main2}) and its behavior under $H_1$. Moreover, we showcase the accuracy of the approximation by the bootstrap scheme introduced in \Cref{sec:bootsrapping the quantiles} and investigate what kind of differences are detectable in the finite sample setting using the bootstrapped DoD-test $\Phi_{DoD}^*$ defined in \eqref{eq:bootstraptest}. All simulations were performed using $\mathsf{R}$ (\citet{Rbasicversion}). 
\subsection{The Hypothesis}\label{subsec:hypothesis simulation}
We begin with the simulation of the finite sample distribution under the hypothesis and consider the metric measure space $\mmspaceX$ from \Cref{ex:dod3}, where $\mathcal{X}$ denotes the unit square in $\R^2$, $d_\mathcal{X}$ the distance induced by the supremum norm and $\muX$ the uniform distribution on $\mathcal{X}$. We generate for $n=m=10,50,100,250$ two samples $\X_n$ and $\X_n'$ of $\muX$ and calculate for $\beta=0.01$ the statistic $\frac{n}{2}\dodstatnn$. For each $n$, we repeat this process 10.000 times. The finite sample distribution is then compared to a Monte Carlo sample of its theoretical limit distribution (sample size 10.000). Kernel density estimators (Gaussian kernel with bandwidth given by Silverman's rule) and Q-Q-plots are displayed in \Cref{fig: hypothesis visiualization}.
All plots highlight that the finite sample distribution of $\dodstatnn$ is already well approximated by its theoretical limit distribution for moderate sample sizes.
%
Moreover, for $n=10$ the quantiles of the finite sample distribution of $\dodstatnn$ are in general larger than the ones of the sample of its theoretical limit distribution, which suggests that the DoD-test will be rather conservative for small $n$. For $n\geq 50$ most quantiles of the finite sample distribution of $\dodstatnn$ match the ones of its theoretical limit distribution reasonably well.
\begin{figure}
	\includegraphics[width = \textwidth,height=0.5\textwidth]{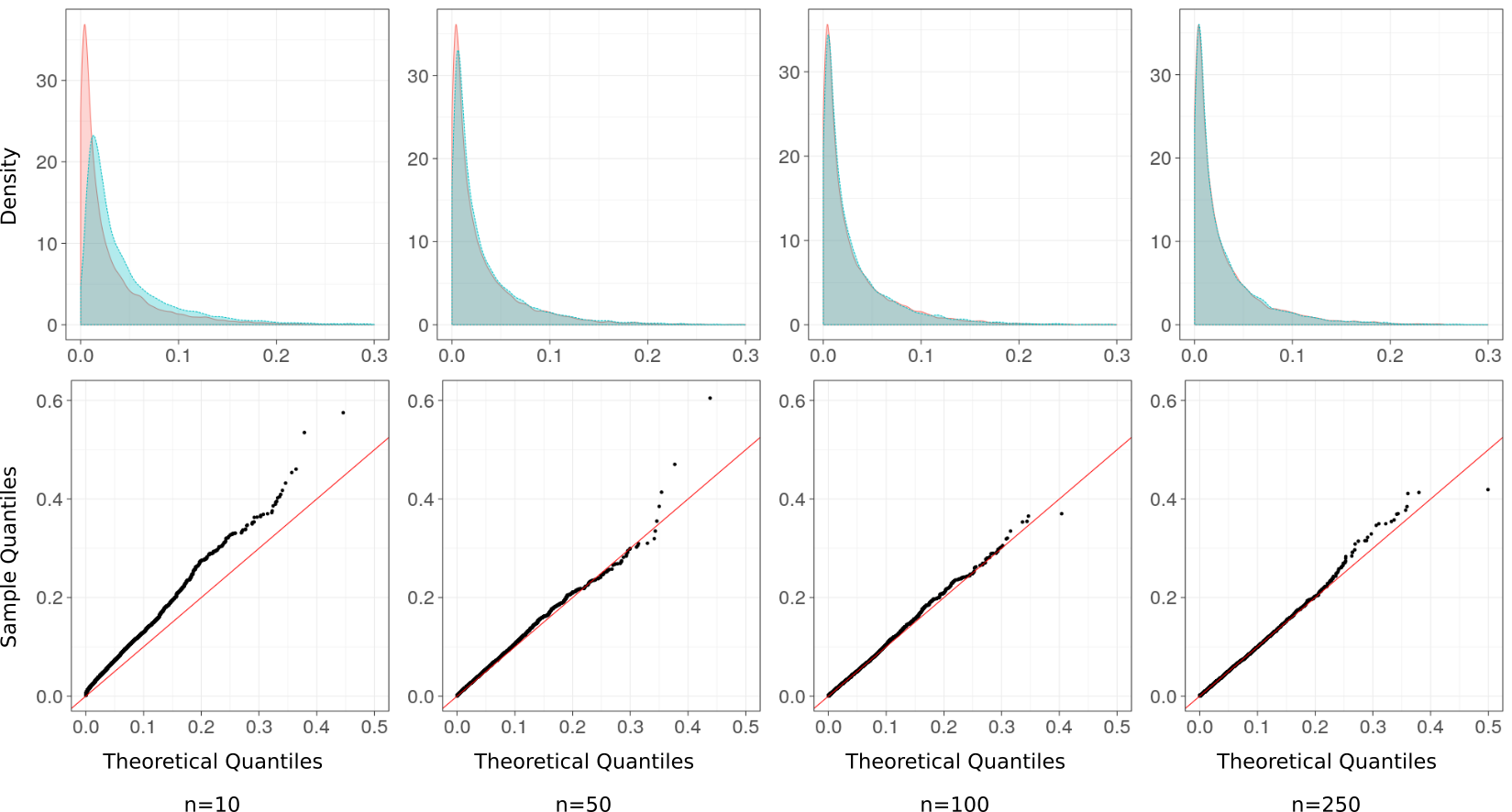}
	\caption{\textbf{Finite sample accuracy of the limit law under the hypothesis:} 
	Upper row: Kernel density estimators of the sample of $\dodstatnn$ (in blue) and a Monte Carlo sample of its theoretical limit distribution (in red, sample size 10.000) for $n=10,50,100,250$ (from left to right).
Lower row: The corresponding Q-Q-plots.}
	\label{fig: hypothesis visiualization}
\end{figure}
\subsection{Alternative}
Next we investigate the behavior of the statistic $\dodstatnn$ under the alternative. To this end, we consider the metric measure spaces $\mmspaceX$ and $\mmspaceY$, where $\mmspaceX$ denotes the one as defined in \Cref{subsec:hypothesis simulation} and $\mmspaceY$ the one, where $\mathcal{Y}$ is a disc in $\R^2$ with radius 0.5, $\dY$ the distance induced by the supremum norm and $\muY$ the uniform distribution on $\mathcal{Y}$. From a testing point of view it is more interesting to compare the finite sample distribution under the alternative to the limit distribution under $H_0$ (the considered metric measure spaces are isomorphic) than to investigate the speed of convergence to the limit derived in \Cref{thm:main}. Thus, we repeat the course of action of \Cref{subsec:hypothesis simulation} for $n=m=10,50,100,250$ and $\beta=0.01$ with samples $\X_n$ and $\Y_n$ from $\muX$ and $\muY$, respectively.\\
In order to highlight the different behavior of $\dodstatnn(\X_n,\Y_n)$ in this setting, we compare its finite sample distributions to the theoretical limit distribution under the hypothesis, which has already been considered in \Cref{subsec:hypothesis simulation}.\\
The results are visualized as kernel density estimators (Gaussian kernel with bandwidth given by Silverman's rule) and Q-Q-plots in \Cref{fig: alternative visiualization}.
As $n$ grows, the kernel density estimator based on the realizations of $\dodstatnn(\X_n,\Y_n)$ shifts to the right and becomes less and less concentrated. Furthermore, it becomes more and more symmetric around its peak which matches its theoretical Gaussian limit behavior (recall \Cref{thm:main}). For $n \geq100$ we see in \Cref{fig: alternative visiualization} that the densities based on the realizations of $\dodstatnn(\X_n,\Y_n)$ differ drastically from the ones based on the Monte Carlo samples of the theoretical limit distribution under $H_0$. The corresponding Q-Q-plots underline this observation and highlight that for $n\geq50$ essentially all quantiles of the sample are drastically larger than the ones of the theoretical limit distribution. This suggests that the proposed test discriminates between these metric measure spaces with high probability already for moderate values of $n$.
\begin{figure}
	\includegraphics[width = \textwidth,height=0.45\textwidth]{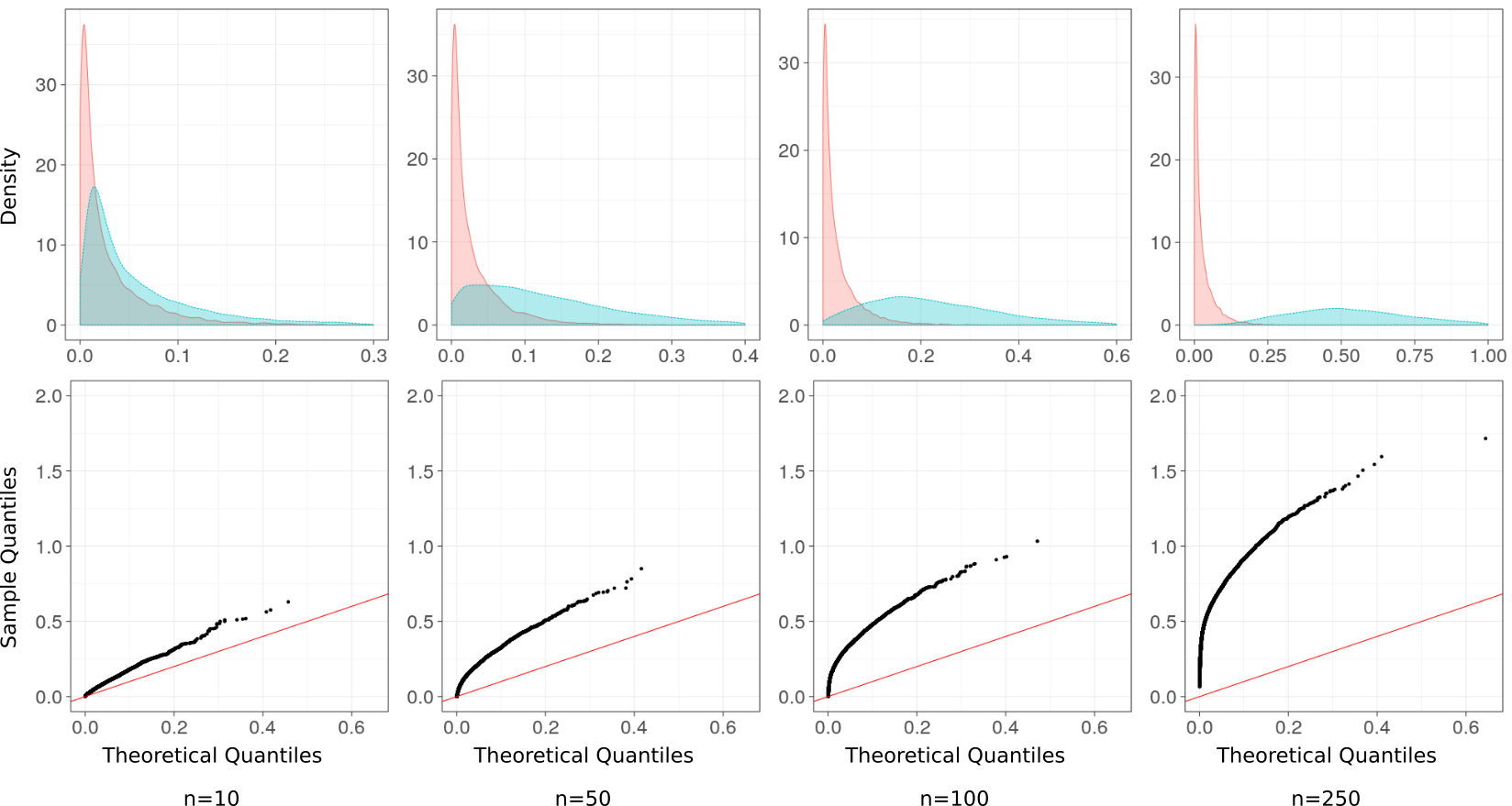}
	\caption{\textbf{The behavior of \boldmath$\dodstatnn$ under the alternative:} 
%
Upper Row: Kernel density estimators based on the Monte Carlo sample of the theoretical limit distribution under $H_0$ (red, sample size 10.000) and the realizations of $\dodstatnn(\X_n,\Y_n)$ (blue) for $n=10,50,100,250$ (from left to right). Lower row: The corresponding Q-Q-plots.}
	\label{fig: alternative visiualization}
\end{figure}
\subsection{The Bootstrap Test}\label{subsec:bootstrap test}
We now investigate the finite sample properties of the bootstrap test $\Phi_{DoD}^*$ (defined in \eqref{eq:bootstraptest}). Therefore, we compare the metric measure space $\mmspaceV$, where $\mathcal{V}$ is the unit square, $\dV$ is the Euclidean distance and $\muV$ the uniform distribution on $\mathcal{V}$, with the spaces $\left\lbrace\left(\mathcal {W}_i,d_{\mathcal {W}_i},\mu_{\mathcal {W}_i} \right) \right\rbrace_{i=1}^5$. Here, $\mathcal {W}_i$ denotes the intersection of the unit square with a disc of radius $r_i\in\left\lbrace \sqrt{2}/2,0.65,0.6,0.55,0.5\right\rbrace$ both centered at $(0,0)$, $d_{\mathcal {W}_i}$ the Euclidean distance and $\mu_{\mathcal {W}_i}$ the uniform distribution on $\mathcal {W}_i$. In \Cref{fig:mmspacevisiualization} the sets $\mathcal{V}$ (white) and $\left\lbrace\mathcal {W}_i \right\rbrace_{i=1}^5$ (red) are displayed. It highlights the increasing similarity of the sets for growing $r_i$.\\
	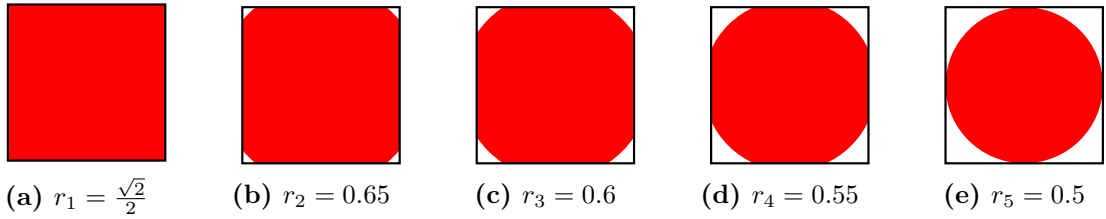
\begin{figure}
	\begin{subfigure}[c]{0.19\textwidth}
		\centering\resizebox{0.8\textwidth}{!}{
			\begin{tikzpicture}
			\begin{scope}
			\clip (-0.5,-0.5) rectangle (0.5,0.5);
			\fill[red] (0,0) circle (0.7);
			\end{scope}
			\draw (-0.5,-0.5) rectangle (0.5,0.5);
			\end{tikzpicture}}
		\caption{$r_1=\frac{\sqrt{2}}{2}$}
	\end{subfigure}
	\begin{subfigure}[c]{0.19\textwidth}
		\centering\resizebox{0.8\textwidth}{!}{
			\begin{tikzpicture}
			\begin{scope}
			\clip (-0.5,-0.5) rectangle (0.5,0.5);
			\fill[red] (0,0) circle (0.65);
			\end{scope}
			\draw (-0.5,-0.5) rectangle (0.5,0.5);
			\end{tikzpicture}}
		\caption{$r_2=0.65$}
	\end{subfigure}
	\begin{subfigure}[8]{0.19\textwidth}
		\centering
		\resizebox{0.8\textwidth}{!}{
			\begin{tikzpicture}
			\begin{scope}
			\clip (-0.5,-0.5) rectangle (0.5,0.5);
			\fill[red] (0,0) circle (0.6);
			\end{scope}
			\draw (-0.5,-0.5) rectangle (0.5,0.5);
			\end{tikzpicture}}
		\caption{$r_3=0.6$}
	\end{subfigure}
	\begin{subfigure}[c]{0.19\textwidth}
		\centering
		\resizebox{0.8\textwidth}{!}{
			\begin{tikzpicture}
			\begin{scope}
			\clip (-0.5,-0.5) rectangle (0.5,0.5);
			\fill[red] (0,0) circle (0.55);
			\end{scope}
			\draw (-0.5,-0.5) rectangle (0.5,0.5);
			\end{tikzpicture}}
		\caption{$r_4=0.55$}
	\end{subfigure}
	\begin{subfigure}[c]{0.19\textwidth}
		\centering
		\resizebox{0.8\textwidth}{!}{
			\begin{tikzpicture}
			\begin{scope}
			\clip (-0.5,-0.5) rectangle (0.5,0.5);
			\fill[red] (0,0) circle (0.5);
			\end{scope}
			\draw (-0.5,-0.5) rectangle (0.5,0.5);
			\end{tikzpicture}}
		\caption{$r_5=0.5$}
	\end{subfigure}
	\caption{\textbf{Different metric measure spaces:} Comparisons of the metric measure space $\mmspaceV$ (white)
		  to the spaces $\left\lbrace\left( \mathcal {W}_i,d_{\mathcal{W}_i},\mu_{\mathcal{W}_i}\right) \right\rbrace_{i=1}^5$ (red).	  
	  }\label{fig:mmspacevisiualization}
\end{figure}Before we employ the bootstrap DoD-test with $\beta = 0.01$ in the present setting, we consider the bootstrap approximation proposed in \Cref{sec:bootsrapping the quantiles} in this simple setting. Therefore, we generate $n=10,50,100,250$ realizations of $\mu_\mathcal{V}$ and calculate for $n_B=n$ based on these samples 1000 times
\begin{equation*}
{\Xi}_{n_B}^*=\int_{0.01}^{0.99}\!\left(\left( {\mathbb{U}}_{n_B}^*\right) ^{-1}(t)\right)^2 \,dt\end{equation*}
as described in \Cref{sec:bootsrapping the quantiles}. We then compare for the different $n$ the obtained finite sample distributions to ones of $\dodstatnn(\V_n,\mathcal{W}_{1,n})$ (generated as described in \Cref{subsec:hypothesis simulation}).
\begin{figure}
	\includegraphics[width = \textwidth,height=0.45\textwidth]{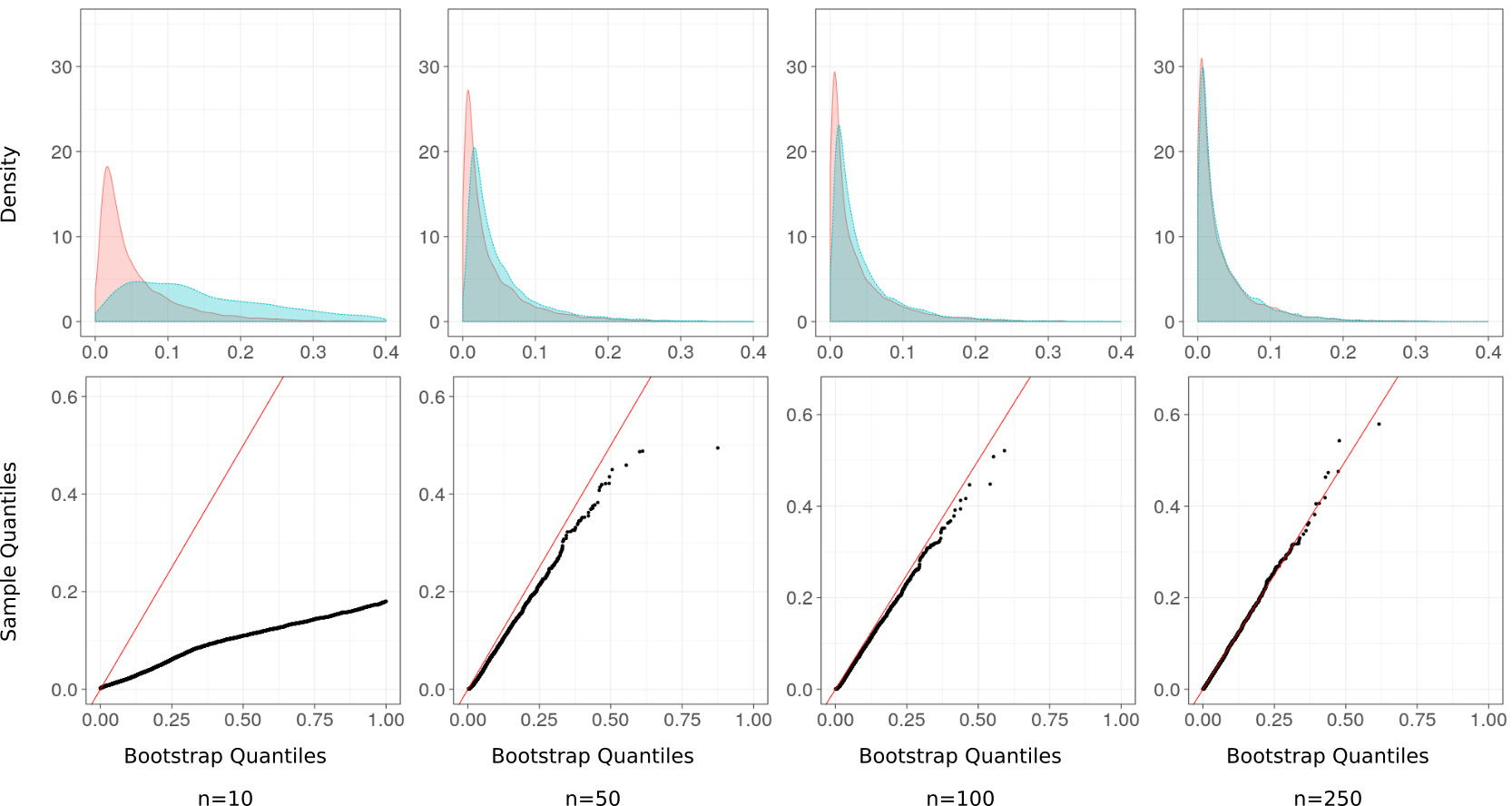}
	\caption{\textbf{Bootstrap under the hypothesis:} Illustration of the $n$ out of $n$ plug-in bootstrap approximation for the statistic $\dodstatnn$ based on samples from $\mmspaceV$ and $\left( \mathcal {W}_1,d_{\mathcal{W}_1},\mu_{\mathcal{W}_1}\right)$. Upper Row: Kernel density estimators of 1000 realizations of $\dodstatnn$ (in red) and its bootstrap approximation (blue, 1000 replications) for $n=10,50,100,250$ (from left to right). Lower row: The corresponding Q-Q-plots. }
	\label{fig: bootstrap visiualization}
\end{figure}
 The results are summarized as kernel density estimators (Gaussian kernel with bandwidth given by Silverman's rule) and Q-Q-plots in \Cref{fig: bootstrap visiualization}. Both, the kernel density estimators and the Q-Q-plots show that for $n\leq50$ the bootstrap quantiles are clearly larger than the empirical quantiles leading to a rather conservative procedure for smaller $n$, an effect that disappears for large $n$.
 
\noindent Next, we aim to apply $\Phi_{DoD}^*$ for $\beta=0.01$ at $5\%$-significance level for discriminating between $\mmspaceV$ and each of the spaces $\left( \mathcal {W}_i,d_{\mathcal{W}_i},\mu_{\mathcal{W}_i}\right)$, $i=1,\dots,5$. To this end, we bootstrap the quantile $\xi_{0.95}$ based on samples from $\muV$ as described in \Cref{sec:bootsrapping the quantiles} ($R=1000$) and then we apply the test $\Phi_{DoD}^*$, defined in \eqref{eq:bootstraptest}, with the bootstrapped quantile $\xi_{n_B,\alpha}^{(R)}$ on 1000 samples of size $n=10, 50, 100,250,500,$ $1000$ as illustrated in \Cref{sec:bootsrapping the quantiles}. The results are summarized in \Cref{tab:res}. In accordance to the previous simulations, we find that the prespecified significance level (for $r_1=\sqrt{2}/2$ the sets are equal) is approximated well for $n\geq 100$. Concerning the power of the test we observe that it is conservative for small $n$, but already for $n\geq 100$ the cases (d) and (e) (see \Cref{fig:mmspacevisiualization}) are detected reasonably well. If we choose $n=1000$, even the spaces in (c) are distinguishable, although in this case, $\mathcal {W}_3$ fills out about $94\%$ of $\mathcal{V}$. For (b), i.e.,  $r_2=0.65$, where more than $98\%$ of $\mathcal{V}$ is covered by $\mathcal {W}_4$, the power of the test falls below $0.11$. 
\begin{table}
	\centering
\begin{tabular}{|c|c|c|c|c|c|}
	\hline\multicolumn{6}{|c|}{$\Phi_{DoD}^*$\rule{0pt}{10pt}} \\
	\hline\rule{0pt}{10pt}
	Sample Size & $r_1={\sqrt{2}}/{2}$ &$r_2=0.65$& $r_3=0.6$ & $r_4=0.55$ &  $r_5=0.5$  \\\hline
	10  & 0.010&0.018&0.005&0.006&0.009    \\
	50  & 0.031&0.034&0.041&0.139&0.406      \\
	100 &0.048& 0.048&0.098&0.323&0.824 \\
	250 &0.038& 0.058&0.203&0.722&1.000 \\
	500 &0.045& 0.080&0.402&0.962&1.000 \\
	1000&0.051& 0.108&0.713&1.000&1.000 \\\hline
\end{tabular}
\medskip
\caption{\textbf{Comparison of different metric measure spaces I:} The empirical power of the DoD-test $\Phi_{DoD}^*$ (1000 replications) for the comparison of the metric measure spaces represented in \Cref{fig:mmspacevisiualization} for different $n$.}\label{tab:res}
\end{table}

\noindent In order to highlight how much power we gain in the finite sample setting by carefully handling the occurring dependencies we repeat the above comparisons, but calculate $\dodstat$ only based on the independent distances, i.e., on $\lbrace \dX(X_1,X_2),\dX(X_3,X_4),\dots,\dX(X_{n-1},X_n)\rbrace $ and $\{ \dY(Y_1,Y_2),\dY(Y_3,Y_4),\dots,\dY(Y_{m-1},Y_m) \} $, instead of all available distances. In the following, the corresponding statistic is denoted as $\widehat{D}_{\beta,ind}$. From the existing theory on testing with the empirical (trimmed) Wasserstein distance \cite{MunkNonparametricvalidationsimilar1998,BarrioTestsgoodnessfit1999,BarrioAsymptoticsL2functionals2005} it is immediately clear, how to construct an asymptotic level $\alpha$ test $\Phi_{{D}_{ind}}$ based on $\widehat{D}_{\beta,ind}$. The results for comparing $\mmspaceV$ and $\left\lbrace\left(\mathcal {W}_i,d_{\mathcal {W}_i},\mu_{\mathcal {W}_i} \right) \right\rbrace_{i=1}^5$ using $\Phi_{{D}_{ind}}$ with $\beta = 0.01$ are displayed in \Cref{tab:comparison indep}. Apparently, $\Phi_{{D}_{ind}}$ keeps its prespecified significance level of $\alpha=0.05$, but develops significantly less power than $\Phi_{DoD}^*$ in the finite sample setting.

\begin{table}
	\centering
	\begin{tabular}{|c|c|c|c|c|c|}
		 \hline\multicolumn{6}{|c|}{$\Phi_{{D}_{ind}}$\rule{0pt}{10pt}} \\
		\hline 	\rule{0pt}{10pt} Sample Size & $r_1={\sqrt{2}}/{2}$ &$r_2=0.65$& $r_3=0.6$ & $r_4=0.55$&$r_5=0.5$  \\\hline
		
		100  &0.036  & 0.040    &0.048 	& 0.050	  & 0.177 \\
		250  & 0.041 &  0.043   & 0.043	& 0.179   & 0.799   \\
		500  & 0.051 &   0.045  &0.084	& 0.583   & 0.998 \\
		1000 & 0.043 &   0.044  &0.231	& 0.974   & 1 \\\hline 
	\end{tabular}
	\medskip
	\caption{\textbf{Comparison of different metric measure spaces III:} The empirical power of the test based on $\widehat{D}_{\beta,ind}$ (1000 applications) for the comparison of the metric measure spaces represented in \Cref{fig:mmspacevisiualization}.} \label{tab:comparison indep}
\end{table}

\noindent Furthermore, we investigate the influence of $\beta$ on our results. To this end, we repeat the previous comparisons with $n=250$  and $\beta=0,0.01,0.05,0.25$. The results of the corresponding comparisons are displayed in \Cref{tab:resfordiffbeta}. It highlights that the test $\Phi_{DoD}^*$ holds its level for all $\beta$. Furthermore, we observe a decrease in power with increasing $\beta$, i.e., increasing degree of trimming. This is due to the fact that excluding too many large distances will no longer show small differences in the diameter.
\begin{table}\centering
	\begin{tabular}{|c|c|c|c|c|c|}
			\hline\multicolumn{6}{|c|}{$\Phi_{DoD}^*$\rule{0pt}{10pt}} \\
		\hline\rule{0pt}{10pt}
		$\beta$ & $r_1={\sqrt{2}}/{2}$ &$r_2=0.65$& $r_3=0.6$ & $r_4=0.55$ &  $r_5=0.5$  \\\hline
		0 & 0.048   &   0.058   &   0.228   & 0.776 & 0.997 \\
		0.01  &  0.051   &   0.065   &  0.232   &    0.736   &     0.995   \\
		0.05 & 0.049 & 0.059& 0.189 & 0.676 & 0.998 \\
		0.25 & 0.045 & 0.061 & 0.156 & 0.579&  0.979\\\hline
	\end{tabular}
	\medskip
	\caption{\textbf{The influence of $\beta$:} The empirical power of the DoD-test $\Phi_{DoD}^*$ (1000 replications) for the comparison of the metric measure spaces represented in \Cref{fig:mmspacevisiualization} for different $\beta$.}\label{tab:resfordiffbeta}
\end{table}

\noindent To conclude this subsection, we remark that in the above simulations the quantiles required for the applications of $\Phi_{DoD}^*$ were always estimated based on samples of $\muV$. Evidently, this slightly affects the results obtained, but we found that this influence is not significant.
\section{Structural Protein Comparisons}\label{sec:application}
Next, we apply the DoD-test to compare the protein structures displayed in \Cref{fig: protein visiualization}. First, we compare 5D0U with itself, in order to investigate the actual significance level of the proposed test under $H_0$ in a realistic example. Afterwards, 5D0U is compared with 5JPT and with 6FAA, respectively. However, before we can apply $\Phi_{DoD}^*$, we need to model proteins as metric measure spaces. Thus, we briefly recap some well known facts about proteins to motivate the subsequent approach. A protein is a polypeptide chain made up of amino acid residues linked together in a definite sequence. Tracing the repeated amide, $C^\alpha$ and carbonyl atoms of each amino acid residue, a so called \textit{backbone} can be identified. It is well established that the distances between the $C^\alpha$ {atoms} of the backbone contain most of the information about the protein's structure \cite{rossman1974recognition,kuntz1975approach,jones1986using,holm1993protein}. For the following comparisons, we randomly select $n = 10,50,100,250,500$ from the 650-750 $C^\alpha$ atoms of the respective proteins and assume that the corresponding coordinates are samples of unknown distributions $\left\lbrace \mu_{\X_i}\right\rbrace_{i=1}^3 $ supported on Borel sets $\mathcal{X}_i\subset\R^3$ equipped with the Euclidean distance. Furthermore, we choose $\beta=0.01$, $\alpha=0.05$ and determine for each $n$ the bootstrap quantile  $\xi_{n_B,0.95}^{(R)}$ based on a sample of size $n$ from 5D0U ($R=1000$, $n_B=n$) as illustrated in \Cref{sec:bootsrapping the quantiles}. This allows us to directly apply the test ${\Phi}_{DoD}^*$ on the drawn samples.
 
\noindent The results of our comparisons are summarized in \Cref{fig:protein results}. It displays the empirical significance level resp. the empirical power of the proposed method as a function of $n$.

\noindent\textbf{5D0U vs 5D0U:} In accordance with the previous simulation study this comparison (see \Cref{fig:protein results}, left) shows that $\Phi_{DoD}^*$ is conservative in this application as well. 
	
\noindent\textbf{5D0U vs 5JPT:} We have already mentioned in \Cref{subsec:applications} that 5D0U and 5JPT are structures of the same protein extracted from two different organisms and thus highly similar (their alignment has a root mean deviation of less than 0.59 \AA). The empirical power for this comparison (\Cref{fig:protein results}, middle) stays for all $n$ below $\alpha = 0.05$ and thus the test does not discriminate between these two protein structures in accordance with our biological knowledge. 

\noindent\textbf{5D0U vs 6FAA:} Although the protein structures 5D0U and 6FAA are similar at large parts (their alignment has a root mean square deviation of $0.75$ \AA), the DoD-test is able to discriminate between them with high statistical power. The empirical power (\Cref{fig:protein results}, right) is a strictly monotonically increasing function in $n$ that is greater than 0.63 for $n\geq100$ and approaches 1 for $n=500$ (recall that we use random samples of the $650-750$ $C^\alpha$ atoms).

\begin{figure}
	\centering
	\begin{subfigure}[c]{0.32\textwidth}
		\centering
		\includegraphics[width = \textwidth,height=\textwidth]{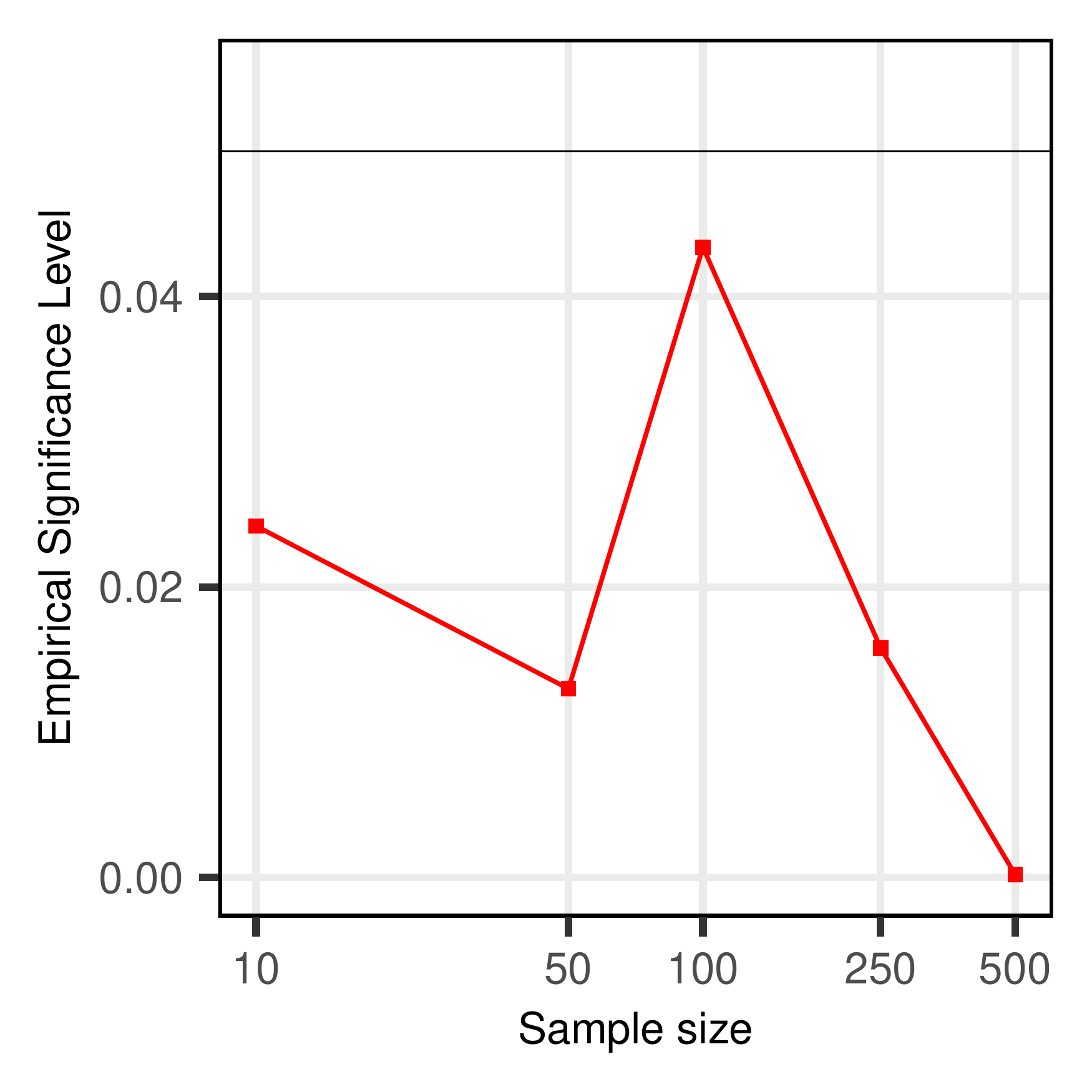}
	\end{subfigure}
	\begin{subfigure}[c]{0.32\textwidth}
				\centering
		\includegraphics[width =\textwidth,height=\textwidth]{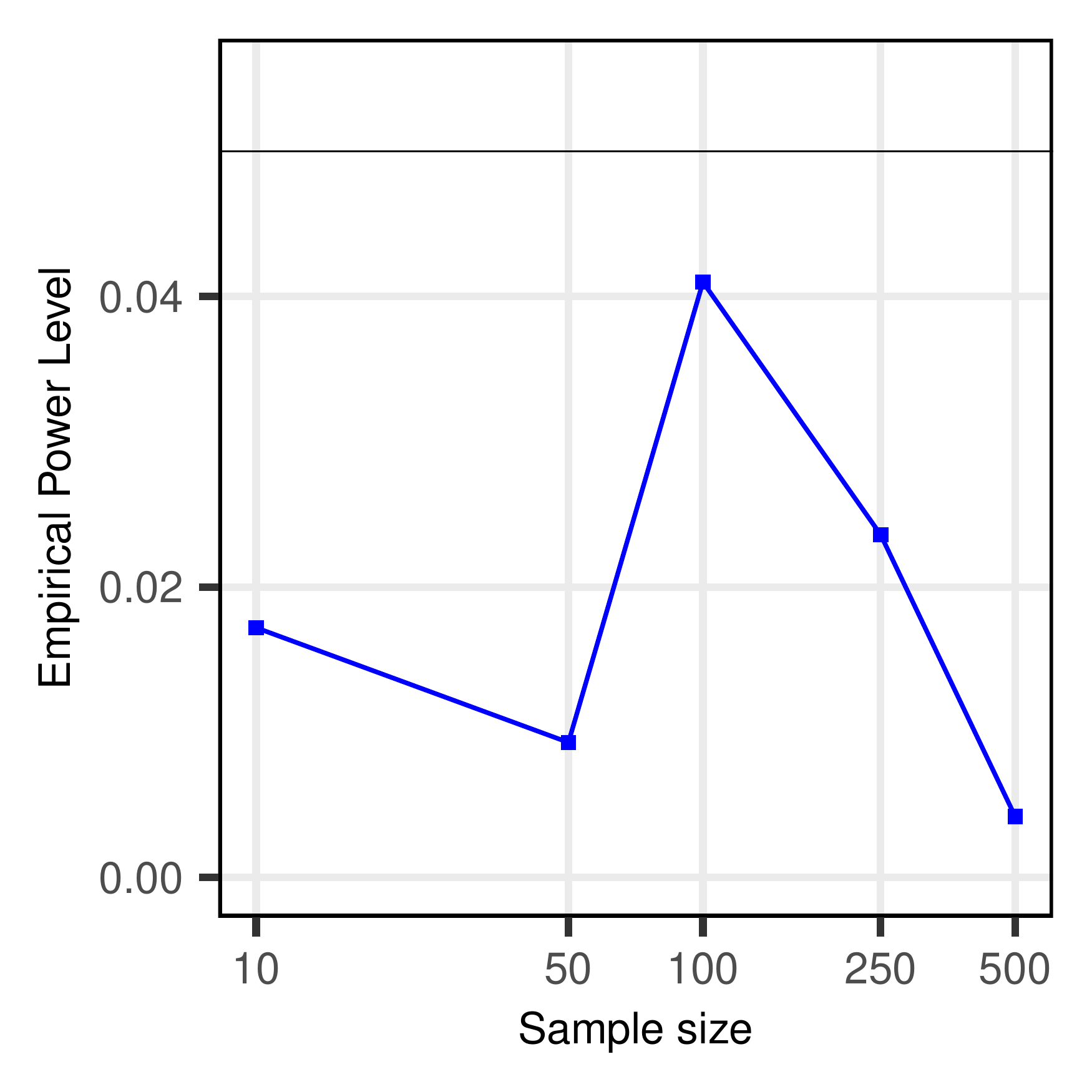}
	\end{subfigure}
\begin{subfigure}[c]{0.32\textwidth}
			\centering
			\includegraphics[width = \textwidth,height=\textwidth]{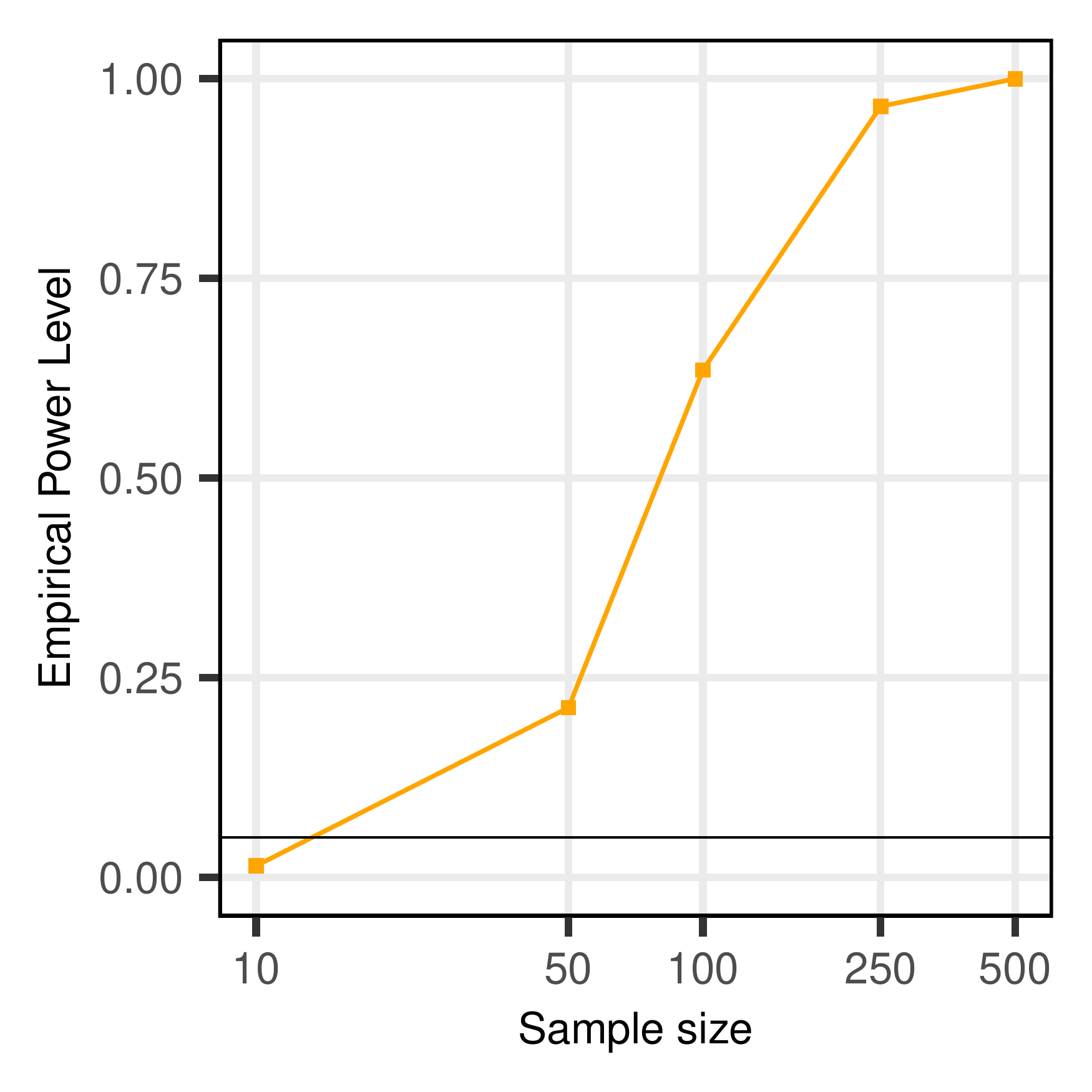}
		\end{subfigure}
	\caption{\textbf{Protein Structure Comparison:} Empirical significance level for comparing 5D0U with itself (left), empirical power for the comparison of 5D0U with 5JPT (middle) as well as the the empirical power for comparing 5D0U with 6FAA (right). 1000 repetitions of the test ${\Phi}^*_{DoD}$ have been simulated for each $n$. 		
}
	\label{fig:protein results}
\end{figure} 

\noindent Finally, we remark that throughout this section we have always based the quantiles required for testing on samples of the protein structure 5D0U. By the definition of $\Phi_{DoD}^*$ it is evident that this influences the results. If we compared the proteins 6FAA and 5D0U using $\Phi_{DoD}^*$ with quantiles obtained by a sample of 6FAA, the results would change slightly, but remain comparable. 
\subsection{Comparison to the DTM-test}\label{subsec:comparisontoBrecheteau}
In this section, we investigate how the test proposed by \citet{brecheteau2019statistical} compares to $\Phi_{DoD}^*$ for protein structure comparison. To put this method in our context, we briefly introduce the method proposed in the latter reference, comment on the underlying theoretical signature and compare the empirical power of the two tests in some simple scenarios.

\noindent We begin with the introduction of the \textit{empirical distance to measure signature}. Let $\mmspaceX$ be a metric measure space with $X_1,\dots,X_n\iid \muX$ and let $\X_n=\left\lbrace X_1,\dots,X_n\right\rbrace $. Then, the \textit{(empirical) distance to measure function} with mass parameter $\kappa={k}/{n}$ is given as
\[\delta_{\X_{n},\kappa}(x)\coloneqq\frac{1}{k}\sum_{i=1}^{k}\dX\left( X^{(i)},x\right),\]
where $X^{(i)}$ denotes the $i$'th nearest neighbor of $x$ in the sample $\X_n$ (for general $\kappa$ see \cite{brecheteau2019statistical}). For $n_S\leq n$ the \textit{empirical distance to measure signature} with mass parameter $\kappa={k}/{n}$ is then defined as
\begin{equation}\label{eq:definition distance to measure}D_{\X_{n},\kappa}\left({n_S}\right)\coloneqq\frac{1}{n_{S}}\sum_{i=1}^{n_S}\delta_{\X_{n},\kappa}(X_i),\end{equation}
which is a discrete probability distribution on $\R$. Let $\mmspaceY$ be a second metric measure space, $Y_1,\dots,Y_n\iid\muY$, let $\Y_n=\{Y_1,\dots,Y_n\}$ and let $D_{\Y_{n},\kappa}\left({n_S}\right)$ be defined analogously to \eqref{eq:definition distance to measure}. Then, given that $\frac{n_S}{n}=o(1)$, \citet{brecheteau2019statistical} constructs an asymptotic level $\alpha$ test for $H_0$ defined in \eqref{eq:nullhypothesis} based on the 1-Kantorovich distance between the respective empirical distance to measure signatures, i.e., on the test statistic
\begin{equation}\label{eq:def of Brestat}T_{n_S,\kappa}(\X_n,\Y_n)\coloneqq\mathcal{K}_1\left( D_{\X_{n},\kappa}\left({n_S}\right),D_{\Y_{n},\kappa}\left({n_S}\right)\right). \end{equation}
The corresponding test, that rejects if \eqref{eq:def of Brestat} exceeds a bootstrapped critical value $q^{DTM}_{\alpha}$, is in the following denoted as $\Phi_{DTM}$. The test statistic $T_{n_S,\kappa}$ is related to the Gromov-Kantorovich distance as follows (see \cite[Prop. 3.2]{brecheteau2019statistical})
\[T_\kappa(\X,\Y)\coloneqq\mathcal{K}_1(D_{\X,\kappa},D_{\Y,\kappa})\leq\frac{2}{\kappa}\mathcal{GK}_1 \left( \X,\Y\right). \]
Here, $D_{\X,\kappa}$ and $D_{\Y,\kappa}$ denote the true \textit{distance to measure signature} (see \cite[Sec. 1]{brecheteau2019statistical} or Section B.6 in the supplementary material \cite{supplement} for a formal definition) of $\mmspaceX$ and $\mmspaceY$, respectively.

\noindent The first step for the comparison of both methods is now to analyze how the respective signatures, the distribution of distances and the distance to measure signature, relate to each other. By the definition of $D_{\X,\kappa}$, which coincides for $n=n_S$ with \eqref{eq:definition distance to measure} for discrete metric measure spaces with $n$ points and the uniform measure, we see that $T_\kappa(\X,\Y)$ puts emphasis on local changes. This allows it to discriminate between the metric measure spaces in Figure 7 of \citet{Memoli2011} for $\kappa>1/4$, whereas $\ptruedodstat\left(\X,\Y \right)$ is always zero for this example. One the other hand, for $\kappa\leq 1/2$, $T_\kappa(\X,\Y)$ cannot distinguish between the metric measure spaces displayed in \Cref{fig:differnt metric measure spaces}, whereas $\ptruedodstat(\X,\Y)$ can become arbitrarily large, if the represented triangles are moved further apart. More precisely, $\ptruedodstat$ scales as the $p$'th power of the distance between the triangles.
\begin{figure}
	\centering
		\includegraphics[scale = 0.4]{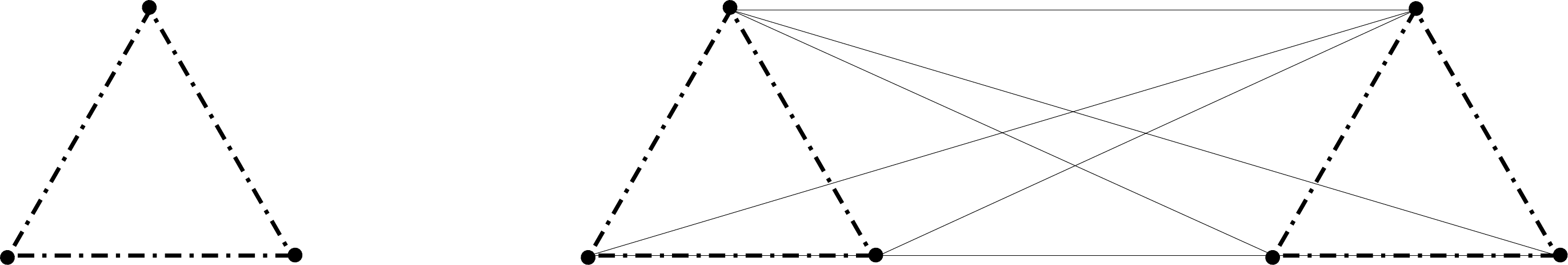}

	\caption{\textbf{Different metric measure spaces:} Representation of two different, discrete metric measure spaces that are both equipped with the respective uniform distribution. Left: Three points with the same pairwise distances (dash-dotted lines). Right: Two translated copies that are further than one side length apart. 
	}
	\label{fig:differnt metric measure spaces}
\end{figure} Generally, one can show that $\ptruedodstat(\X,\Y)$ and $T_\kappa(\X,\Y)$ are related to different sequences of lower bounds for the Gromov-Kantorovich distance (see \cite[Sec. B.6]{supplement} for more details). 

 \noindent The next step of the comparison of both methods consists in comparing their performance for simulated examples. To this end, we repeat the comparisons of $\mmspaceV$ with the spaces $\left\lbrace\left(\mathcal {W}_i,d_{\mathcal {W}_i},\mu_{\mathcal {W}_i} \right) \right\rbrace_{i=1}^5$ as done in \Cref{subsec:bootstrap test}. Furthermore, we simulate the empirical power of $\Phi_{DoD}^*$ in the setting of Section 4.2 of \cite{brecheteau2019statistical}. For both comparisons, we choose a significance level of $\alpha=0.05$. We begin with applying $\Phi_{DTM}$ in the setting of \Cref{subsec:bootstrap test}. By the definition of the test statistic in \eqref{eq:def of Brestat} and the representation of the metric measure spaces in \Cref{fig:mmspacevisiualization}, it is clear that this setting is difficult for the method based on $T_{n_S,\kappa}$. Furthermore, we remark that the test $\Phi_{DTM}$ is not easily applied in the finite sample setting. Although it is an asymptotic test of level $\alpha$, the parameters $n_S$ and $\kappa$ have to be chosen carefully for the test to hold its prespecified significance level for finite samples. In particular, choosing $n_S$ and $\kappa$ large violates the independence assumption underlying the results of \cite{brecheteau2019statistical}.\\ Generally, we found that the choices $\kappa\leq0.1$ and $n_S\leq n/15$ yield reasonable results which we list in  \Cref{tab: comparison brecheteau}. The results show that the DTM-test holds its significance level (for $r_1$ both spaces are the same) and develops a significant amount of power for the cases $r_4$ and $r_5$. As to be expected it struggles to discriminate $\mmspaceV$ and $\left(\mathcal {W}_3,d_{\mathcal {W}_3},\mu_{\mathcal {W}_3} \right)$, as for this task especially the large distances are important. Here, $\Phi_{DoD}^*$ clearly outperforms $\Phi_{DTM}$.
 
 \begin{table}
 	\centering
 	\begin{tabular}{|c|c|c|c|c|c|}\hline
 		\multicolumn{6}{|c|}{$\kappa=0.05$, $\alpha=0.05$\rule{0pt}{10pt}} \\
 		\hline 	\rule{0pt}{10pt} Sample Size & $r_1={\sqrt{2}}/{2}$ &$r_2=0.65$& $r_3=0.6$ & $r_4=0.55$&$r_5=0.5$  \\\hline
 		
 		100  & 0.039 &     0.054	 &0.049 	&0.064& 0.096  \\
 		250  & 0.053 &  0.049   &0.070 	&0.144&  0.379   \\
 		500  & 0.065 &  0.076   &0.128	&0.361& 0.853  \\
 		1000 & 0.072 & 		0.088 &0.259	&0.740&  1\\\hline
 		\multicolumn{6}{|c|}{$\kappa=0.1$, $\alpha=0.05$\rule{0pt}{10pt}} \\
 		\hline\rule{0pt}{10pt}	Sample Size & $r_1={\sqrt{2}}/{2}$ &$r_2=0.65$& $r_3=0.6$ & $r_4=0.55$&$r_5=0.5$\\\hline
 		100 &  0.067&     0.047	 & 0.070&0.083&  0.170 \\
 		250  &   0.066  &   0.069  & 0.094&0.163& 0.533    \\
 		500  &  0.064 &   0.081  &0.147 &0.430&   0.926  \\
 		1000 & 0.064 & 	0.091	 &  0.255    &0.790&1  \\\hline
 	\end{tabular}
 	\medskip
 	\caption{\textbf{Comparison of different metric measure spaces II:} The empirical power of the test based on $T_{n_S,\kappa}$ (1000 applications) for the comparison of the metric measure spaces represented in \Cref{fig:mmspacevisiualization} for different $n$ and $\kappa$ ($n_S=n/15$).}\label{tab: comparison brecheteau}
 \end{table}
 
\noindent In a further example, we investigate how well $\Phi_{DoD}^*$ and $\Phi_{DTM}$ discriminate between different spiral types (see \Cref{fig:spirals}). These spirals are constructed as follows. Let $R\sim U[0,1]$ be uniformly distributed and independent of $S,S'\iid N(0,1)$. Choose a significance level of $\alpha = 0.05$ and let $\beta=0.01$. For $v=10,15,20,30,40,100$ we simulate samples of
\begin{equation}
	\label{eq:spiral creation} (R\sin(vR)+0.03S,R\cos(vR)+0.03S')\sim \mu_v
\end{equation}
and considers these to be samples from a metric measure spaces equipped with the Euclidean metric.
\begin{figure}
	\centering
	\begin{subfigure}[c]{0.32\textwidth}
		\centering
		\includegraphics[width = \textwidth,height=\textwidth]{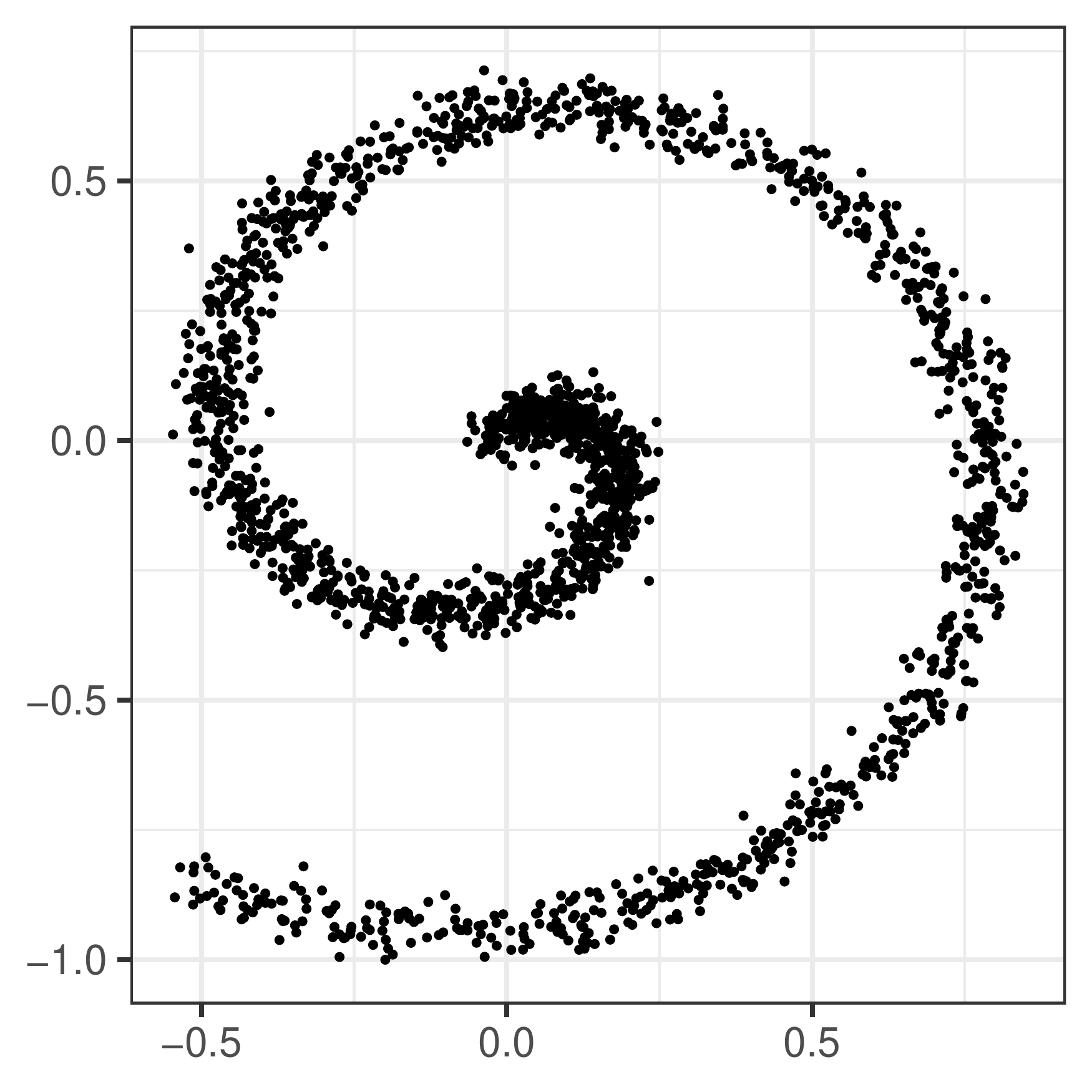}
	\end{subfigure}
	\begin{subfigure}[c]{0.32\textwidth}
	\centering
	\includegraphics[width = \textwidth,height=\textwidth]{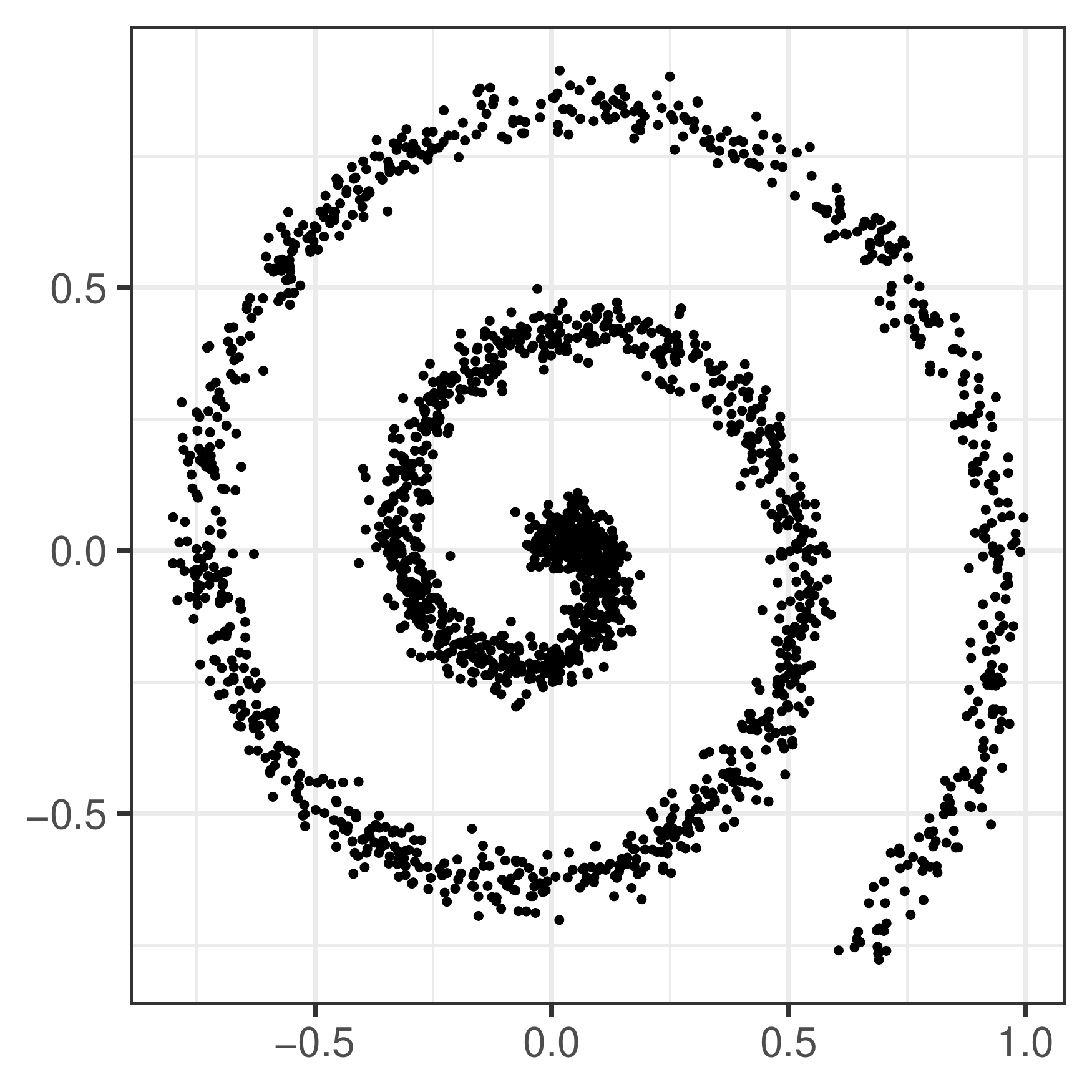}
\end{subfigure}
	\begin{subfigure}[c]{0.32\textwidth}
	\centering
	\includegraphics[width = \textwidth,height=\textwidth]{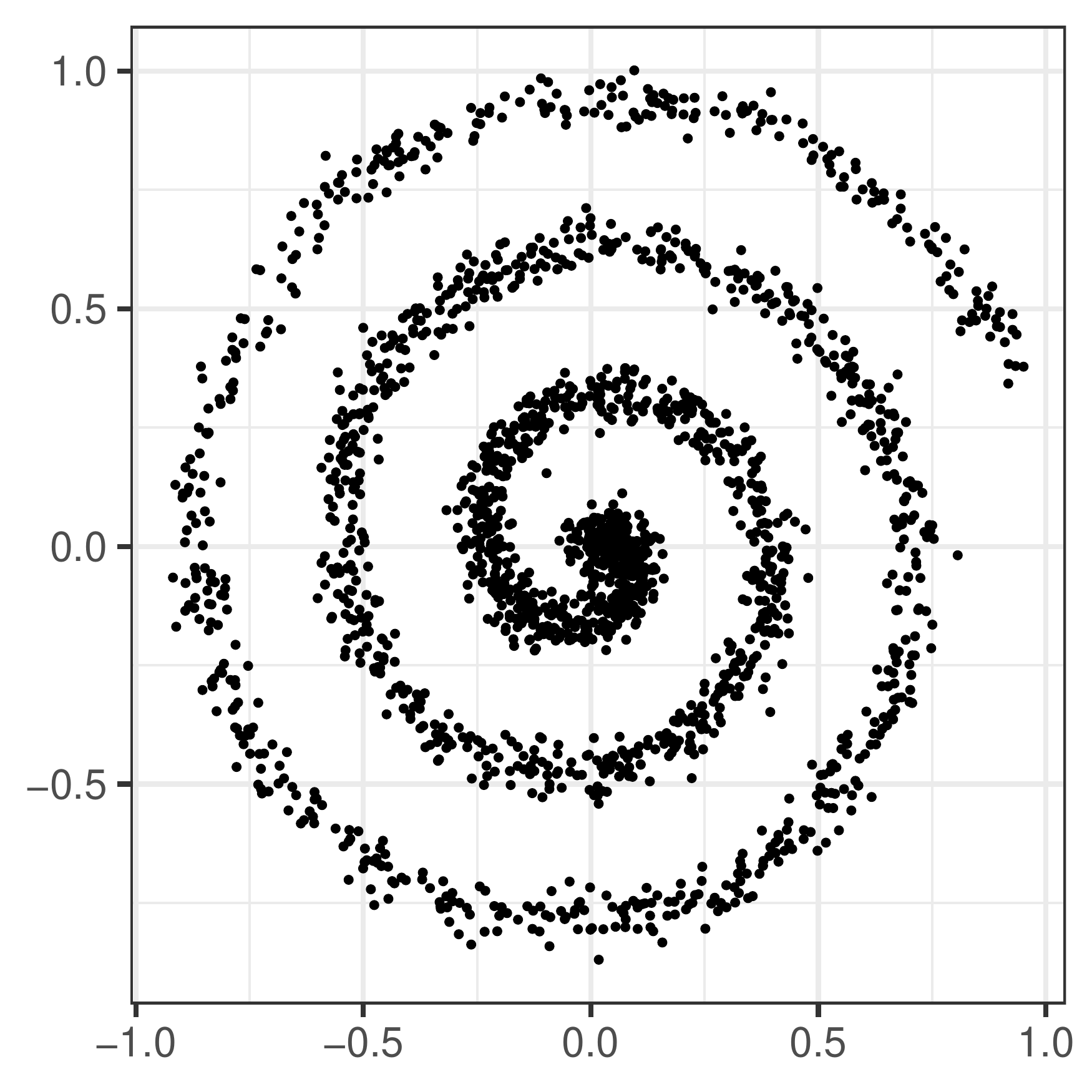}
\end{subfigure}
	\caption{\textbf{Different metric measure spaces II:} Representation of samples created by \eqref{eq:spiral creation} for $v=10,15,20$ (from left to right).
	}
	\label{fig:spirals}
\end{figure}
We apply $\Phi_{DoD}^*$ with quantiles based on $\mu_v$ in order to compare $\mu_v$ with $\mu_v$ (based on different samples) and $\mu_v$ with $\mu_{10}$, $v=15,\dots,100$. The results presented in \Cref{tab:galaxy comparison} show that $\Phi_{DoD}^*$ holds its significance level and always discriminates between $\mu_{10}$ and $\mu_v$ in this setting. The reported values from \cite{brecheteau2019statistical} are also listed for the ease of readability. They show that the additional subsampling in the definition of \eqref{eq:def of Brestat} leads to a loss of power in this example. Moreover, for $\Phi_{DTM}$ the results always depend on the choice of the parameters. 
\begin{table}\centering	\begin{tabular}{|c|c|c|c|c|c|}\hline
		\multicolumn{6}{|c|}{$\Phi_{DoD}^*$\rule{0pt}{10pt}} \\
			\hline\rule{0pt}{10pt}
		$v$ & 15 &20& 30&40&100  \\\hline
		\rule{0pt}{10pt} Type-I error & 0.036   & 0.051   & 0.051&0.054&0.048    \\
		\rule{0pt}{10pt}  Emp. power & 1   & 1	& 1&1&1  \\\hline
		\multicolumn{6}{|c|}{$\Phi_{DTM}$\rule{0pt}{10pt}} \\
	\hline\rule{0pt}{10pt}
$v$ & 15 &20& 30&40&100  \\\hline
\rule{0pt}{10pt} Type-I error&  0.043&0.049&0.050&0.051&0.050\\
\rule{0pt}{10pt}  Emp. power &0.525&0.884&0.987&0.977&0.985\\\hline
\end{tabular}
	\medskip
	\caption{\textbf{Spiral Comparison:} Empirical significance level and power of  $\Phi_{DoD}^*$ and  $\Phi_{DTM}$ for the comparisons the metric measure spaces represented in \Cref{fig:spirals}.}\label{tab:galaxy comparison}
\end{table}

\noindent Finally, we come to the protein structure comparison. We repeat the previous comparisons of 5D0U, 5JPT and 6FAA for a significance level $\alpha=0.05$, $n=100,250,500$, $n_S=N/5$ and $\kappa=0.05,0.1$. The results are reported in \Cref{tab:protein comparison brecheteau}. We see that also $\Phi_{DTM}$ approximately holds its significance level and is more sensitive to small local changes such as slight shifts of structural elements for small $\kappa$.
%
 However, the evident differences between 5D0U and 6FAA are detected much better by ${\Phi}_{DoD}^*$ (see \Cref{fig:protein results}).
\begin{table}\centering
	\begin{tabular}{|c|c|c|c|}\hline
		\multicolumn{4}{|c|}{$\kappa=0.05$, $\alpha=0.05$\rule{0pt}{10pt}} \\
		\hline\rule{0pt}{10pt}
		$n$ & 5D0U vs 5D0U &5D0U vs 5JPT& 5D0U vs 6FAA  \\\hline
		
		100 & 0.055   & 0.068   & 0.109    \\
		250 & 0.049   & 0.080   & 0.297     \\
		500 & 0.037   & 0.090	& 0.690  \\\hline
\multicolumn{4}{|c|}{$\kappa=0.1$, $\alpha=0.05$\rule{0pt}{10pt}} \\
		\hline\rule{0pt}{10pt}
$n$ & 5D0U vs 5D0U &5D0U vs 5JPT& 5D0U vs 6FAA  \\\hline
	100 & 0.068 &   0.061   &   0.166    \\
	250  & 0.056&   0.084   &  0.420     \\
	500 & 0.047 &   0.104   & 0.760  \\\hline
\end{tabular}
	\medskip
	\caption{\textbf{Protein Comparison II:} The empirical power of the Test proposed by \citet{brecheteau2019statistical} (1000 applications) for the comparison of proteins represented in \Cref{fig: protein visiualization} for different $n$ and $\kappa$ ($n_S=n/5$).}\label{tab:protein comparison brecheteau}
\end{table}

\subsection{Discussion}
We conclude this section with some remarks on the way we have modeled proteins as metric measure spaces in this section. The flexibility of metric measure spaces offers possible refinements which might be of interest for further investigation. For example, we have treated all $C^\alpha$ atoms as equally important, although it appears to be reasonable for some applications to put major emphasis on the cores of the proteins. Further, one could have included that the error of measurement that is in general higher for some parts of the protein by adjusting the measure on the considered space accordingly. Finally, we remark that throughout this section we have considered proteins as rigid objects and shown that this allows us to efficiently discriminate between them. However, it is well known that proteins undergo different conformational states. In such a case the usage of the Euclidean metric as done previously will most likely cause $\Phi_{DoD}^*$ to discriminate between the different conformations, as the Euclidean distance is not suited for the matching of flexible objects \cite{elad2003bending}. Depending on the application one might want to take this into account by adopting a different metric reflecting (estimates of the) corresponding intrinsic distances and to modify the theory developed. Conceptually, this is straight forward but beyond the scope of this illustrative example.
\appendix

\section{Sketch of the Proofs}\label{appendix:proofs}

The proofs of both \Cref{thm:main2} $(i)$ and \Cref{thm:main} $(i)$, that treat the case $\beta>0$, are based on distributional limits for the empirical $U$-quantile processes $\mathbb{U}_n^{-1}\coloneqq\sqrt{n}\big(U_n^{-1}\allowdisplaybreaks -U^{-1}\big) $ and $\mathbb{V}_m^{-1}\coloneqq\sqrt{m} \big(V_m^{-1}-V^{-1} \big)$ in $\ell^\infty[\beta,1-\beta]$. These limits are derived in Section B.2 of \citet{supplement} using the Hadamard differentiability of the inversion functional $\phi_{inv}:F\mapsto F^{-1}$ regarded as a map from the set of restricted distribution functions into the space $\ell^\infty[\beta,1-\beta]$ \cite[Lemma 3.9.23]{vaartWeakConvergenceEmpirical1996}. Once the distributional limits of $\mathbb{U}_n^{-1}$ and $\mathbb{V}_m^{-1}$ are established, both \Cref{thm:main2} $(i)$ and \Cref{thm:main} $(i)$ follow by standard arguments (see Section B.2 and Section B.4 of \cite{supplement}). Therefore, we focus in the following on the proofs of \Cref{thm:main2} $(ii)$, \Cref{thm:finite sample bound for p=2} and \Cref{thm:main} $(ii)$, which are concerned with the case $\beta=0$. As the proof of \Cref{thm:finite sample bound for p=2} is essential for the derivation of the second part of \Cref{thm:main2}, we prove it first.
\subsection{Proof of Theorem \ref*{thm:finite sample bound for p=2}}\label{subsec:proof of the finite sample boud}
 The key idea is to exploit the dependency structure of the samples $\left\lbrace \dX\left(X_i,X_j \right) \right\rbrace_{1\leq i <j \leq n} $ and $\left\lbrace \dY\left(Y_k,Y_l \right) \right\rbrace_{1\leq k <l \leq m}$. Let us consider the set $\left\lbrace \dX\left(X_i,X_j \right) \right\rbrace_{1\leq i <j \leq n} $. Since $X_1,\dots,X_n$ are independent, it follows that the random variable $\dX(X_i,X_j)$ is independent of $\dX(X_{i'},X_{j'})$, whenever $i,j,i',j'$ are pairwise different. This allows us to divide the sample into relatively large groups of independent random variables. This idea is represented in \Cref{fig:idearepresentation}. It highlights a possibility to divide the set $\left\lbrace \dX\left(X_i,X_j \right) \right\rbrace_{1\leq i <j \leq 6} $ into five sets of three independent distances.
\begin{figure}\centering
	\begin{subfigure}[c]{0.2\textwidth}
		\centering
		\includegraphics[width = \textwidth,height=\textwidth]{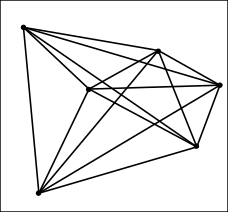}
	\end{subfigure}
\hspace*{0.5cm}
	\begin{subfigure}[c]{0.2\textwidth}
		\centering
		\includegraphics[width = \textwidth, height=\textwidth]{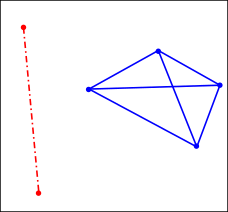}
	\end{subfigure}
\hspace*{0.5cm}
	\begin{subfigure}[c]{0.2\textwidth}
		\centering
		\includegraphics[width = \textwidth, height=\textwidth]{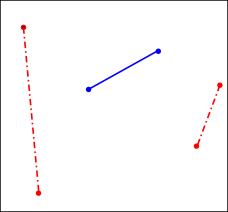}
	\end{subfigure}\vspace*{2mm}\\
	\begin{subfigure}[c]{0.2\textwidth}
		\centering
	\includegraphics[width = \textwidth,height=\textwidth]{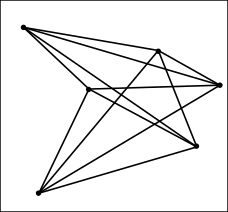}
\end{subfigure}
\hspace*{0.5cm}
\begin{subfigure}[c]{0.2\textwidth}\centering
	\includegraphics[width = \textwidth, height=\textwidth]{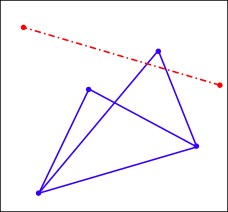}
\end{subfigure}
\hspace*{0.5cm}
\begin{subfigure}[c]{0.2\textwidth}\centering
	\includegraphics[width = \textwidth, height=\textwidth]{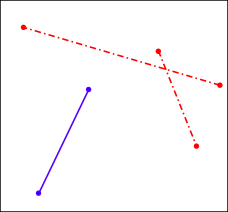}
\end{subfigure}
\caption{\textbf{Partitioning the Distances:} Illustration how to partition the set $\left\lbrace \dX\left(X_i,X_j \right) \right\rbrace_{1\leq i <j \leq 6} $ successively into five set of independent distances of size three. Top row: Left: All pairwise distances between six points. Middle: All distances (blue) that are independent of one chosen distance (red). Right: All distances that are independent of two chosen, independent distances (same color code, right) are shown. Bottom row: The same selection process for the set, where the independent distances displayed in the top right plot were removed.}\label{fig:idearepresentation}
\end{figure}
Generally, one can prove the following (see Section B.3 of the supplementary material \cite{supplement}).

\begin{lemma}\label{lem:Ustatsamplesplitup}
	Let $n\geq 3$ and let $X_1,\dots,X_n\iid\muX$. If $n$ is even, there exists a partition $\left\lbrace \Pi^n_k\right\rbrace_{1\leq k\leq n-1}$ of $\left\lbrace(i,j) \right\rbrace_{1\leq i<j\leq n}$ such that $|\Pi^n_k|=n/2$ for each $k$ and such that the random variables in the set $\left\lbrace \dX(X_i,X_j)\right\rbrace_{(i,j)\in\Pi^n_k}$ are independent, $1\leq k\leq n/2$. If $n$ is odd, there exists a partition $\left\lbrace \Pi^n_k\right\rbrace_{1\leq k\leq n}$ of $\left\lbrace(i,j) \right\rbrace_{1\leq i<j\leq n}$ such that $|\Pi^n_k|=(n-1)/2$ for each $k$ and such that the random variables in the set $\left\lbrace \dX(X_i,X_j)\right\rbrace_{(i,j)\in\Pi^n_k}$ are independent, $1\leq k\leq n/2$. 
\end{lemma}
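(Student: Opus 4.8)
The plan is to recognize this as a purely combinatorial statement about edge decompositions of the complete graph. First I would record the elementary probabilistic reduction: since $X_1,\dots,X_n$ are independent, the distances $\dX(X_i,X_j)$ and $\dX(X_{i'},X_{j'})$ are functions of disjoint sub-tuples of $(X_1,\dots,X_n)$ precisely when the index pairs $\{i,j\}$ and $\{i',j'\}$ are disjoint, and are then independent. More strongly, if a block $\Pi$ of index pairs is such that the pairs in $\Pi$ are mutually vertex-disjoint, that is, $\Pi$ is a \emph{matching} on $\{1,\dots,n\}$, then the random variables $\{\dX(X_i,X_j)\}_{(i,j)\in\Pi}$ are measurable functions of disjoint sub-collections of the independent $X_1,\dots,X_n$ and are therefore \emph{mutually} independent, not merely pairwise so. Thus the lemma reduces to partitioning the edge set of the complete graph $K_n$ on vertices $\{1,\dots,n\}$ into matchings of the prescribed sizes.

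For even $n$ this is exactly a $1$-factorization of $K_n$ into $n-1$ perfect matchings, each of size $n/2$, while for odd $n$ it is a decomposition into $n$ near-perfect matchings, each of size $(n-1)/2$; both are classical, and I would give explicit modular constructions. For odd $n$, label the vertices by $\mathbb{Z}_n$; since $2$ is invertible modulo $n$, every edge $\{a,b\}$ has a unique midpoint $s=(a+b)/2\in\mathbb{Z}_n$, and setting $\Pi^n_s=\{\{s-t,s+t\}:1\le t\le (n-1)/2\}$ gives $n$ matchings (the vertex $s$ being omitted by $\Pi^n_s$) that partition the $n(n-1)/2$ edges. For even $n$, label the vertices by $\mathbb{Z}_{n-1}\cup\{\infty\}$; since $n-1$ is odd, $2$ is invertible modulo $n-1$, and setting $\Pi^n_s=\{\{\infty,s\}\}\cup\{\{s-t,s+t\}:1\le t\le (n-2)/2\}$ for $s\in\mathbb{Z}_{n-1}$ yields $n-1$ perfect matchings of size $\tfrac{n-2}{2}+1=\tfrac{n}{2}$.

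The verification in each case is a short counting argument carried out through the midpoint bijection. Each block is a matching because the vertices $s\pm t$ are pairwise distinct as $t$ ranges over its values and, together with the pairing $\{\infty,s\}$ in the even case, exhaust every vertex exactly once; hence the random variables indexed by that block are jointly independent by the reduction above. Every edge lies in exactly one block because its midpoint is uniquely determined, and the two parameter values $t$ and its reflection yield the same unordered pair, so exactly one representative falls in the chosen range. Combined with the size count $(n-1)\cdot\tfrac{n}{2}=n(n-1)/2$ in the even case, respectively $n\cdot\tfrac{n-1}{2}=n(n-1)/2$ in the odd case, this shows the blocks exhaust all pairs.

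There is no deep obstacle here; the only points requiring care are the parity bookkeeping, namely keeping the two arithmetic moduli ($n$ versus $n-1$) straight and checking that the degenerate value $t=0$, which would produce a loop or re-use the vertex $s$, is correctly excluded by the constraint $t\ge 1$. The one genuinely conceptual step, worth stating explicitly, is the upgrade noted above: vertex-disjointness of an \emph{entire} matching (rather than of single pairs) is what forces joint independence of all the distances in a block, which is precisely the conclusion the statement requires.
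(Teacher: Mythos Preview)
Your proposal is correct. The paper defers the proof of this lemma to the supplementary material, but the idea sketched there via \Cref{fig:idearepresentation} (partitioning the $\binom{6}{2}$ distances into five triples of independent distances) is precisely the $1$-factorization of $K_n$ you invoke, and your explicit round-robin/midpoint construction on $\mathbb{Z}_n$ (odd case) and $\mathbb{Z}_{n-1}\cup\{\infty\}$ (even case) is the standard way to realize it; the probabilistic reduction to matchings via disjoint index sets is exactly the observation the paper makes before stating the lemma.
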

The idea of the proof of \Cref{thm:finite sample bound for p=2} is now to write the problem at hand as a certain assignment problem and then to restrict the assignments to assignments between groups of independent distances.

	\noindent We observe that for $\beta\in[0,1/2)$ 
	\[\Eargs{ \dodstatnn }\leq\Eargs{\int\limits_{0}^{1}\!|U_n^{-1}(t)-V_m^{-1}(t)|^2 \, dt }=\Eargs{\Wtwo{\muUn}{\muVm}},\]
	where $\muUn$ and $\muVm$ are the empirical measures corresponding to $U_n$ and $V_m$, i.e., for $A\in\borelsetsR$
	\[\muUn(A)=\frac{2}{n(n-1)}\sum_{1 \leq i <j\leq n}\indifunc{\dX(X_i,X_j)\in A}\]
	and
	\[\muVm(A)=\frac{2}{m(m-1)}\sum_{1 \leq k <l\leq m}\indifunc{\dY(Y_k,Y_l)\in A}.\]
	Since it holds $\muU=\mu^V$ by assumption, we obtain by the triangle inequality
	\[\Eargs{\Wtwo{\muUn}{\muVm}}\leq2\left( \Eargs{\Wtwo{\muUn}{\muU}}+\Eargs{\Wtwo{\muVm}{\mu^V}}\right). \]
	Thus, it remains to show
\begin{equation}\label{eq:aimfinitesamplebound}
	\Eargs{\Wtwo{\muUn}{\muU}}\leq\frac{4}{n+1}J_2\left(\muU\right).
\end{equation}	
In order to demonstrate \eqref{eq:aimfinitesamplebound}, we will use \Cref{lem:Ustatsamplesplitup}. Hence, it is notationally convenient to distinguish the cases $n$ even and $n$ odd, although the proof is essentially the same in both settings. In the following, we will therefore restrict ourselves to $n$ odd.

\noindent The first step to prove \eqref{eq:aimfinitesamplebound} is to realize (cf. \citet[Sec. 4]{bobkov2016one}), that
		\begin{equation}\label{eq:firstU-quantileLpnorm estimate}
\Eargs{\Wtwo{\muUn}{\muU}}\leq\Eargs{\Wtwo{\muUn}{\nuUn}},
\end{equation}
where $\nuUn$ denotes an independent copy of $\muUn$, i.e.,
\[\nuUn(A)=\frac{2}{n(n-1)}\sum_{1 \leq i <j\leq n}\indifunc{\dX(X_i',X_j')\in A},\]
for $X_1',\dots,X_n'\iid \muX$. By \Cref{lem:Ustatsamplesplitup}, there exists a partition $\Pi^n_1,\dots,\Pi^n_{n}$ of the index set $\left\lbrace (i,j)\right\rbrace_{1\leq i<j\leq n}$ with $|\Pi^n_k|=(n-1)/2$, $1\leq k\leq n$, such that the random variables in the sets $\{\dX(X_i,X_j)\}_{(i,j)\in\Pi^n_k}$ and the ones in the sets $\{ \dX(X_i',X_j')\}_{(i,j)\in\Pi^n_k}$ are independent. Let $\left\lbrace d_{(i)}^{\Pi^n_k,X}\right\rbrace_{1\leq i\leq(n-1)/2}$ stand for the ordered sample of $\lbrace \dX(X_i,X_j )\rbrace_{(i,j)\in\Pi^n_k}$ and the let $\Big\lbrace d_{(i)}^{\Pi^n_k,X'}\Big\rbrace_{1\leq i\leq(n-1)/2}$ stand for the one of $\left\lbrace \dX\left(X_i',X_j' \right)\right\rbrace_{(i,j)\in\Pi^n_k}$, $1\leq k\leq n$. The application of Corollary B.12 with this partition yields that
\begin{align*}
\Eargs{\Wtwo{\muUn}{\nu^U_n}}\leq&\frac{2}{n(n-1)}\Eargs{\sum_{k=1}^{n}\sum_{i=1} ^{(n-1)/2}\left| d^{\Pi^n_k,X}_{(i)}-d^{\Pi^n_k,X'}_{(i)}\right|^2}. 
\end{align*}
Furthermore, we realize that, as $X_1,\dots,X_n,X_1',\dots,X_n'$ are independent, identically distributed, it holds for $1\leq k,l\leq n$ that 
\begin{equation*}
\sum_{i=1} ^{(n-1)/2}\left| d^{\Pi^n_k,X}_{(i)}-d^{\Pi^n_k,X'}_{(i)}\right|^2\overset{D}{=}	\sum_{i=1} ^{(n-1)/2}\left| d^{\Pi^n_l,X}_{(i)}-d^{\Pi^n_l,X'}_{(i)}\right|^2.
\end{equation*}
Consequently, we have
\begin{align*}
\Eargs{\sum_{k=1}^{n}\sum_{i=1} ^{(n-1)/2}\left| d^{\Pi^n_k,X}_{(i)}-d^{\Pi^n_k,X'}_{(i)}\right|^2}=n\Eargs{\sum_{i=1} ^{(n-1)/2}\left| d^{\Pi^n_1,X}_{(i)}-d^{\Pi^n_1,X'}_{(i)}\right|^2}.
\end{align*}
We come to the final step of this proof. Let for any $A\in\borelsetsR$
\[\mu_n^*(A)=\frac{2}{n-1}\sum_{i=1} ^{(n-1)/2}\indifunc{d_{(i)}^{\Pi^n_1,X}\in A}\]
and let $\nu_n^*(A)$ be defined analogously.
Then, Theorem 4.3 of \citet{bobkov2016one} implies that
\[\Eargs{\Wtwo{\mu_n^*}{\nu_n^*}}=\frac{2}{n-1}\Eargs{\sum_{i=1} ^{(n-1)/2}\left| d^{\Pi^n_1,X}_{(i)}-d^{\Pi^n_1,X'}_{(i)}\right|^2}.\]
By construction, the samples $\left\lbrace d_{i}^{\Pi^n_1,X}\right\rbrace_{1\leq i\leq (n-1)/2}$ and $\left\lbrace d_{i}^{\Pi^n_1,X'}\right\rbrace_{1\leq i\leq (n-1)/2}$ consist of independent random variables and are independent of each other. Furthermore, we have $\Eargs{\mu_n^*}=\Eargs{\nu_n^*}=\muU$. Since $J_2\left(\mu^U\right)<\infty$ by assumption, it follows by Theorem 5.1 of \citet{bobkov2016one} that
\begin{align*}
\frac{2}{n-1}\Eargs{\sum_{i=1} ^{(n-1)/2}\left| d^{\Pi^n_1,X}_{(i)}-d^{\Pi^n_1,X'}_{(i)}\right|^2}=\Eargs{\Wtwo{\mu_n^*}{\nu_n^*}}\leq \frac{4}{n+1}J_2\left(\mu^U\right).
\end{align*}
This yields \eqref{eq:aimfinitesamplebound} and thus concludes the proof. \hfill\qed
\subsection{Proof of Theorem \ref*{thm:main2} \texorpdfstring{$(ii)$}{(ii)}}

\noindent For notational convenience we restrict ourselves from now on to the case $n=m$. However, the same strategy of proof also gives the general case $n\neq m$ (for some additional details on this issue see \cite[Sec. B.2.2]{supplement}). We first demonstrate that $\Big\{\XiUVn (\beta)\,|\,\beta\in[0,1/2]\Big\}_{n\in\mathbb{N}}\subset\left(C[0,1/2],||\cdot||_\infty\right),$ where 
		\begin{align*}
		\XiUVn(\beta)=\frac{n}{2}\int_{\beta}^{1-\beta}\!\left( U_n^{-1}(t)-V_n^{-1}(t)\right)^2 \, dt,
	\end{align*}
	is tight. Under \Cref{cond:secondcondition}, which implies \Cref{cond:firstcondition} for $\beta\in (0,1)$, we already have by \Cref{thm:main2} $(i)$ that $\XiUVn(\beta)\rightsquigarrow\Xi(\beta)$ for all $\beta\in(0,1/2)$. In order to prove tightness of the sequence $\left\{\XiUVn\right\}_{n\in\mathbb{N}}\subset C[0,1/2]$ we process the subsequent steps:
	\begin{enumerate}
		\item Show that the sequence of real valued random variables $\left\lbrace \XiUVn(0)\right\rbrace_{n\in\N}$ is tight;
		\item Control the following expectations for small $\beta$
		\[	\Eargs{\int_{0}^{\beta}\! \left(U_n^{-1}(t)-V_n^{-1}(t)\right)^2\,dt } \text{  and  } \Eargs{\int_{1-\beta}^{1}\! \left(U_n^{-1}(t)-V_n^{-1}(t)\right)^2\,dt }.\]
	\end{enumerate}
While the first step of the above strategy directly follows by a combination of Lemma B.6 and \Cref{thm:finite sample bound for p=2}, we have to work for the second. Using a technically somewhat more involved variation of the partitioning idea of the proof of \Cref{thm:finite sample bound for p=2}, we can demonstrate the subsequent bounds. 
\begin{lemma}\label{lemma:integrated difference for small beta}
	Suppose \Cref{cond:setting} and \Cref{cond:secondcondition} are met. Let $\muU=\mu^V$, let $n\geq 100$, let $0\leq\beta=\beta_n<1/6$ and let $n\beta > 8$. Then, it follows that 
	\begin{align}\label{eq:important estimate1}
	&\Eargs{\int_{0}^{\beta}\! \left(U_n^{-1}(t)-V_n^{-1}(t)\right)^2\,dt } \leq \frac{2C_1}{n-1}\left( 4\beta\left( 1+2\frac{\sqrt{\log\left( n\right)}}{\sqrt{n }}\right)\right)^{2\gamma_1+2}+o\left(n^{-1}\right)
	\end{align}
	as well as 
	\begin{align}\label{eq:important estimate2}
	&\Eargs{\int_{1-\beta}^{1}\! \left(U_n^{-1}(t)-V_n^{-1}(t)\right)^2\,dt } \leq \frac{2C_2}{n-1}\left( 4\beta\left( 1+2\frac{\sqrt{\log\left( n\right)}}{\sqrt{n }}\right)\right)^{2\gamma_2+2} +o\left(n^{-1}\right),
	\end{align}
	where $C_1$ and $C_2$ denote finite constants that are independent of $\beta$.
\end{lemma}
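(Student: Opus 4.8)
The plan is to bound the two boundary integrals directly, exploiting that under $\mu^U=\mu^V$ (and in the reduced case $n=m$) the pairwise distances underlying $V_n$ form an independent copy of those underlying $U_n$. Hence $\int_0^\beta(U_n^{-1}(t)-V_n^{-1}(t))^2\,dt$ is a \emph{partial} (localized) squared Kantorovich cost between two independent empirical $U$-measures of the same law, and I would treat it by a localized refinement of the partitioning argument used for \Cref{thm:finite sample bound for p=2}. First I would apply \Cref{lem:Ustatsamplesplitup} to split each of the two dependent samples into $n$ groups of $(n-1)/2$ mutually independent distances.

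The global argument bounds the \emph{full} cost by the sum over groups of the within-group sorted-matching costs (a suboptimal coupling). For the partial cost over $[0,\beta]$ this localization is not automatic, and controlling it is the heart of the proof. I would introduce a high-probability event $E_n$ on which the pooled empirical quantiles $U_n^{-1}(\beta)$ and $V_n^{-1}(\beta)$, together with the number of distances in each independent group lying below them, are simultaneously controlled; concentration of the $U$-statistic empirical c.d.f.\ (through its Hoeffding projection, equivalently through the independent groups) supplies the deviation of order $\sqrt{\log n}/\sqrt n$ and allows $E_n$ to be chosen with $\prob(E_n^c)=o(n^{-1})$. On $E_n$ the partial pooled matching is dominated by the within-group sorted matchings truncated at the inflated level $\beta^\ast\coloneqq 4\beta\big(1+2\sqrt{\log n}/\sqrt n\big)$, while on $E_n^c$ the integrand is at most $\beta\,\diam{\X}^2$, contributing only $o(n^{-1})$.

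By identical distribution across groups the expected truncated cost reduces to $n$ times that of a single group, producing the prefactor $n/\binom{n}{2}=2/(n-1)$. For one group of size $N=(n-1)/2$ I would bound the expected truncated sorted-matching cost by a localized version of the order-statistic estimates of \citet{bobkov2016one}, namely by $\tfrac{C}{N}\int_0^{\beta^\ast} t(1-t)\,[(U^{-1})'(t)]^2\,dt$; since $\mathbb{E}\big[\sum_{i\le \beta^\ast N}(\cdot)^2\big]=N\cdot\mathbb{E}\big[\tfrac1N\sum_{i\le\beta^\ast N}(\cdot)^2\big]$, the factor $N$ cancels and the per-group contribution is $O((\beta^\ast)^{2\gamma_1+2})$, uniformly in $N$. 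Here I invoke \Cref{cond:secondcondition}: from $|(U^{-1})'(t)|\le c_U t^{\gamma_1}(1-t)^{\gamma_2}$ and $\beta^\ast$ bounded away from $1$ one gets $\int_0^{\beta^\ast}t(1-t)[(U^{-1})'(t)]^2\,dt\le c_U^2\,C'\int_0^{\beta^\ast}t^{2\gamma_1+1}\,dt\le C''(\beta^\ast)^{2\gamma_1+2}$, the integral being finite precisely because $\gamma_1>-1$. Collecting the prefactor $2/(n-1)$, the per-group bound $C_1(\beta^\ast)^{2\gamma_1+2}$, and the $o(n^{-1})$ remainder from $E_n^c$ yields \eqref{eq:important estimate1} with $C_1$ absorbing the constants; the upper-tail bound \eqref{eq:important estimate2} follows by the symmetric argument near $t=1$, where $\gamma_2$ governs the $(1-t)^{2\gamma_2+2}$ behavior.

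The main obstacle is the localization in the second paragraph: unlike the full Kantorovich cost, the partial cost is not delivered by a convexity/Jensen step, and it does not localize through the partition on its own. The technical crux is to show, uniformly and with failure probability $o(n^{-1})$, that the number of distances in each independent group falling below the pooled $\beta$-quantile is at most of order $\beta^\ast N$, which is exactly what produces the inflated level $\beta^\ast$ and the $\sqrt{\log n}/\sqrt n$ correction in the statement.
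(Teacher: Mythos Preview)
Your proposal is essentially correct and follows the same route the paper takes: it explicitly describes its argument as ``a technically somewhat more involved variation of the partitioning idea of the proof of \Cref{thm:finite sample bound for p=2}'', and that is precisely what you outline---partitioning via \Cref{lem:Ustatsamplesplitup}, a high-probability event (with $\sqrt{\log n}/\sqrt n$ deviations, giving the inflated threshold $\beta^\ast$ and the $o(n^{-1})$ remainder from the complement), and a localized Bobkov--Ledoux order-statistic bound combined with \Cref{cond:secondcondition} to produce the power $(\beta^\ast)^{2\gamma_i+2}$. You have also correctly isolated the genuine difficulty: the partial cost on $[0,\beta]$ does not localize through the partition by convexity alone, and one must control, uniformly over all groups and with failure probability $o(n^{-1})$, how many group distances fall below the pooled $\beta$-quantile. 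One minor comment: the factor bookkeeping in your third paragraph is slightly tangled (the cancellation of $N$ is really just the statement that the pooled-to-group reduction yields an \emph{average} over groups, each of which carries a $1/N$ from the localized Bobkov--Ledoux estimate), but the resulting prefactor $2/(n-1)$ is correct.
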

Based on this, we prove the subsequent technical lemma.
\begin{lemma}\label{lemma:Stetigkeitsmodul}
	Under \Cref{cond:secondcondition}, it holds for all $\epsilon>0$ that
	\[\lim_{\delta\to0^+}\limsup_{n\to\infty}\prob\left(\omega(\XiUVn,\delta)>\epsilon)\right)=0,\]
	where $\omega(\cdot,\cdot)$ is defined for $f:[0,1/2]\to\R$ and $0<\delta\leq 1/2$ as $\omega(f,\delta)=\sup_{|s-t|\leq \delta}|f(s)-f(t)|$.
\end{lemma}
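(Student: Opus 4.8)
The plan is to exploit that $\XiUVn$ is a \emph{nonincreasing} function of $\beta$ and to split the modulus of continuity into an interior contribution, controlled by the process-level convergence already available on compact subintervals of $(0,1)$, and a boundary contribution near $\beta=0$ and $\beta=1$, controlled by \Cref{lemma:integrated difference for small beta}. First I would record the monotonicity: for $0\le\beta_1\le\beta_2\le 1/2$ the interval $[\beta_2,1-\beta_2]$ is contained in $[\beta_1,1-\beta_1]$ and the integrand is nonnegative, so
\[
\XiUVn(\beta_1)-\XiUVn(\beta_2)=\frac n2\left(\int_{\beta_1}^{\beta_2}\!(U_n^{-1}-V_n^{-1})^2\,dt+\int_{1-\beta_2}^{1-\beta_1}\!(U_n^{-1}-V_n^{-1})^2\,dt\right)\ge 0 .
\]
In particular $\XiUVn$ is nonincreasing, whence $\omega(\XiUVn,\delta)=\sup\{\XiUVn(s)-\XiUVn(t):0\le s\le t\le 1/2,\ t-s\le\delta\}$.

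Next I would fix $a\in(0,1/4)$ and take $\delta<a/2$. For a competing pair $(s,t)$ with $t\le a$, the monotonicity identity above gives $\XiUVn(s)-\XiUVn(t)\le B_n(a)$, where
\[
B_n(a):=\frac n2\left(\int_{0}^{a}\!(U_n^{-1}-V_n^{-1})^2\,dt+\int_{1-a}^{1}\!(U_n^{-1}-V_n^{-1})^2\,dt\right).
\]
For the remaining pairs $t>a$, so $s>a-\delta>a/2$, and such pairs contribute only to the interior modulus $\omega_{[a/2,1/2]}(\XiUVn,\delta)$, i.e. the modulus computed over $[a/2,1/2]$. This yields $\omega(\XiUVn,\delta)\le B_n(a)+\omega_{[a/2,1/2]}(\XiUVn,\delta)$ and hence
\[
\prob\!\left(\omega(\XiUVn,\delta)>\epsilon\right)\le\prob\!\left(B_n(a)>\epsilon/2\right)+\prob\!\left(\omega_{[a/2,1/2]}(\XiUVn,\delta)>\epsilon/2\right).
\]

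For the boundary term I would apply \Cref{lemma:integrated difference for small beta} with $\beta=a$ together with Markov's inequality. Multiplying those bounds by $n/2$ and letting $n\to\infty$ (so that $n/(n-1)\to1$, the factor $1+2\sqrt{\log n/n}\to1$, and the $o(n^{-1})$ remainders vanish after scaling) gives $\limsup_{n\to\infty}\Eargs{B_n(a)}\le C_1(4a)^{2\gamma_1+2}+C_2(4a)^{2\gamma_2+2}=:C(a)$, and $C(a)\to0$ as $a\to0$ since $\gamma_1,\gamma_2>-1$; consequently $\limsup_n\prob(B_n(a)>\epsilon/2)\le 2C(a)/\epsilon$, uniformly in $\delta$. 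For the interior term I would invoke the weak convergence of $\mathbb{U}_n^{-1}-\mathbb{V}_n^{-1}$ in $\ell^\infty[a/2,1-a/2]$ that underlies \Cref{thm:main2}$(i)$: under $\muU=\mu^V$ one has $\XiUVn(\beta)=\tfrac12\int_\beta^{1-\beta}(\mathbb{U}_n^{-1}-\mathbb{V}_n^{-1})^2\,dt$, and the map $g\mapsto(\beta\mapsto\tfrac12\int_\beta^{1-\beta}g^2\,dt)$ is continuous from $\ell^\infty[a/2,1-a/2]$ into $C[a/2,1/2]$ (the limit process having bounded, continuous paths), so the continuous mapping theorem shows $\XiUVn$ converges weakly in $C[a/2,1/2]$. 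Weak convergence in $C[a/2,1/2]$ entails asymptotic equicontinuity, i.e. $\lim_{\delta\to0^+}\limsup_n\prob(\omega_{[a/2,1/2]}(\XiUVn,\delta)>\epsilon/2)=0$.

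Combining the two estimates, for every fixed $a$ I obtain $\lim_{\delta\to0^+}\limsup_n\prob(\omega(\XiUVn,\delta)>\epsilon)\le 2C(a)/\epsilon$; since the left-hand side is independent of $a$, letting $a\to0$ yields the claim. The main obstacle is the behaviour in the boundary regions near $\beta=0$ and $\beta=1$, where the quantile functions are unbounded under \Cref{cond:secondcondition} and so the drop of $\XiUVn$ could in principle concentrate; this is precisely the content of \Cref{lemma:integrated difference for small beta}, and the role of the present argument is to organize the monotonicity together with the interior tightness so that those delicate boundary estimates can be fed in cleanly.
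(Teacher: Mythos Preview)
Your argument is correct and follows the same overall strategy as the paper: split the modulus into a boundary part and an interior part, control the boundary via \Cref{lemma:integrated difference for small beta} plus Markov, and control the interior via the $\ell^\infty$-convergence of the quantile process on compact subintervals together with the continuous mapping theorem. The difference is purely organizational. The paper couples the boundary cutoff to $\delta$ itself (splitting at $t=2\delta$), passes to the limit process $\mathbb G$ on $[\delta,1-\delta]$ via Portmanteau, and then shows the resulting probability tends to $0$ as $\delta\to0$ by the explicit bound $\sqrt{\delta/\epsilon}\,\Eargs{\sup_{[\delta,1-\delta]}|\mathbb G|}$. You instead decouple the two scales by introducing an auxiliary cutoff $a$, obtain weak convergence of $\beta\mapsto\XiUVn(\beta)$ in $C[a/2,1/2]$ by one application of the continuous mapping theorem, read off asymptotic equicontinuity directly, send $\delta\to0$ first, and only then let $a\to0$. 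This two-parameter route is a bit cleaner because it sidesteps the need to control the Gaussian limit on shrinking intervals. One minor point: \Cref{lemma:integrated difference for small beta} requires $\beta<1/6$, so take $a\in(0,1/6)$ rather than $(0,1/4)$; this is harmless since you send $a\to0$ at the end.
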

\begin{proof}
	Let $0<\delta<1/20$ and let $0\leq s, t\leq1/2$. We have that
	\begin{align}
	&\prob\left( \omega(\XiUVn,\delta)>\epsilon\right)= \prob\left( \sup_{|s-t|\leq \delta}\left|\XiUVn(s)-\XiUVn(t) \right| >\epsilon\right)\nonumber\\
	 = &\prob\left( \sup_{\substack{|s-t|\leq \delta, \\ t< 2\delta}}\left|\XiUVn(s)-\XiUVn(t) \right| >\epsilon \text{ or }  \sup_{\substack{|s-t|\leq \delta,\\ t\geq 2\delta}}\left|\XiUVn(s)-\XiUVn(t) \right| >\epsilon\right)\nonumber\\
	\leq& \prob\left( \sup_{\substack{|s-t|\leq \delta, \\ t< 2\delta}}\left|\XiUVn(s)-\XiUVn(t) \right| >{\epsilon} \!\right)\!+\!\prob\left(\sup_{\substack{|s-t|\leq \delta, \\ t\geq 2\delta}}\left|\XiUVn(s)-\XiUVn(t) \right| >{\epsilon}\!\right)\nonumber\\
	=& \text{ \textbf{I}}+\text{\textbf{II}}\label{eq:twosummands}.
	\end{align}
	In the following, we consider both summands separately.\vspace*{2mm}
	
	\noindent\textit{First Summand:} Since $\XiUVn(\beta)$ is monotonically decreasing in $\beta$, 
	\begin{align*}
	\text{ \textbf{I}}\leq& 	\prob\left( \XiUVn(0)-\XiUVn(3\delta) >{\epsilon} \right)\\
	=&	\prob\left( \int_{0}^{3\delta}\!|\UV(s)|^2\diff s+ \int_{1-3\delta}^{1}\!|\UV(s)|^2\diff  s >{\epsilon} \right),
	\end{align*}
	where $\UV=\sqrt{\frac{n}{2}}\left(\qu-\qv\right)$. Further, we obtain that
	\begin{align*}
	&\prob\left( \int_{0}^{3\delta}\!|\UV(s)|^2\diff s+ \int_{1-3\delta}^{1}\!|\UV(s)|^2\diff  s >{\epsilon} \right)\\
	\leq &\,\frac{2}{\epsilon} \left(\Eargs{ \int_{0}^{3\delta}\!|\UV(s)|^2\diff s}+   \Eargs{ \int_{1-3\delta}^{1}\!|\UV(s)|^2\diff  s  }\right).
	\end{align*}
	As $3\delta<1/6$, we can conclude with \Cref{lemma:integrated difference for small beta} that 
	\begin{align*}
	&\limsup_{n\to\infty}\frac{2}{\epsilon}\mathbb{E}\left[ \int_{0}^{3\delta}\!\left|\UV(s)\right|^2\diff s\right]\\
	\leq& \limsup_{n\to\infty} \frac{n}{\epsilon}\left( \frac{2C_1}{n-1}\left( 6\delta\left( 1+2\frac{\sqrt{\log\left( n\right)}}{\sqrt{n }}\right)\right)^{2\gamma_1+2} +o\left(n^{-1}\right)\right) 
	\leq C_2\delta^{2\gamma_1+2}.
	\end{align*}
	Here, $C_1$ and $C_2$ denote finite constants that are independent of $\delta$. Similarly,
	\[\limsup_{n\to\infty}\Eargs{ \int_{1-3\delta}^{1}\!|\UV(s)|^2\diff  s  }\leq C_3\delta^{2\gamma_2+2},\]
	where $C_3$ denotes a finite constant independent of $\delta$. Since we have by assumption that $\gamma_1,\gamma_2>-1$, i.e., $2\gamma_1+2>0$ and $2\gamma_2+2>0$, it follows that
	\[\lim_{\delta\to0+}\limsup_{n\to\infty}\prob\left( \sup_{\substack{|s-t|\leq \delta, \\ t< 2\delta}}\left|\XiUVn(s)-\XiUVn(t) \right| >{\epsilon} \right)=0.\]
	\textit{Second Summand:} In order to handle \textbf{II} in \eqref{eq:twosummands}, we want to make use of the fact that for $\delta >0$ the process $\UV=\sqrt{\frac{n}{2}}(U_n^{-1}-V_n^{-1})$ converges to a Gaussian process in $\ell^\infty[\delta,1-\delta]$ given \Cref*{cond:secondcondition} (see Lemma B.7 of \citet{supplement}). To this end, we verify that the function 
	\begin{align*}&\Upsilon:\ell^\infty[\delta,1-\delta]\times \ell^\infty[\delta,1-\delta] \to\R,\\&(f,g)\mapsto \sup_{\substack{|s-t|\leq \delta, \\ t\geq 2\delta}}\left|\int_{s}^{1-s}\!f^2(x) \diff x -\int_{t}^{1-t}\! g^2(x)\diff x \right| \end{align*}
	is continuous.
	
	\noindent Let in the following $||\cdot||_\infty$ denote the norm of $\ell^\infty[\delta,1-\delta]$. Let $\left((f_n,g_n)\right)_{n\in\N}\subset \ell^\infty[\delta,1-\delta]\times \ell^\infty[\delta,1-\delta]$ be a sequence such that $(f_n,g_n)\to(f,g)$ with respect to the product norm $||(f,g)||\coloneqq||f||_\infty+||g||_\infty$. Then, the inverse triangle inequality yields
	\begin{align*}
	&\left| \Upsilon(f_n,g_n)-\Upsilon(f,g)\right|\\
	\leq& \sup_{\substack{|s-t|\leq \delta, \\ t\geq 2\delta}} \Bigg|\int_{s}^{1-s}\! f_n^2 (x)-f^2(x)\diff x -\int_{t}^{1-t}\! g_n^2(x)-g^2(x) \diff x \Bigg|\\
	\leq&  \sup_{\substack{|s-t|\leq \delta, \\ t\geq 2\delta}} \left|\int_{s}^{1-s}\! \left\|  f_n +f\right\|_\infty \left\|  f_n -f\right\|_\infty \diff x +\int_{t}^{1-t}\! \left\| g +g_n\right\| \left\| g-g_n\right\|_\infty  \diff x \right|\\
	\leq& (1-\delta)\max\left\lbrace  \left\|f_n\right\|_\infty +\left\|f\right\|_\infty,~ \left\|g_n\right\|_\infty +\left\|g\right\|_\infty \right\rbrace \left\|\left( f-f_n, g-g_n\right)\right\|\overset{n\to\infty}{\to}0.
	\end{align*}
	Thus, we have shown $\lim_{n\to\infty}\Upsilon(f_n,g_n)=\Upsilon(f,g)$, i.e., that $\Upsilon$ is sequentially continuous. Hence, a combination of Lemma B.7 and the Continuous Mapping Theorem \citep[Thm. 1.3.6]{vaartWeakConvergenceEmpirical1996} yields that
	\[\Upsilon\left( \UV,\UV\right) \rightsquigarrow \Upsilon\left( \G,\G\right),\]
	where $\G$ denotes the centered Gaussian process defined in Theorem 2.5. Furthermore, Lemma B.8 shows that $\XiUVn(\beta)$ is measurable for $\beta\in[\delta,1-\delta]$ and $n\in\N$. As $\XiUVn$ is continuous in $\beta$ this induces the measurability of $\Upsilon\left( \UV,\UV\right)$ for $n\in\N$. Thus, we find that
	\[\Upsilon\left( \UV,\UV\right) \Rightarrow\Upsilon\left( \G,\G\right).\]
	Let $A=[\epsilon,\infty)\subset\R$. Then, the set $A$ is closed and $(\epsilon,\infty)\subset A$. Hence, an application of the Portmanteau-Theorem \citep[Thm. 2.1]{BillingsleyConvergenceProbabilityMeasures2013} yields that
	\begin{align*}
	\limsup_{n\to\infty} \prob\left(\!\sup_{\substack{|s-t|\leq \delta, \\ t\geq 2\delta}}\left|\XiUVn(s)-\XiUVn(t) \right| >{\epsilon}\!\right)&\leq\limsup_{n\to\infty}\prob\left( \Upsilon\left( \UV,\!\UV\right) \!\! \in\! A\right)\\&\leq\prob\left( \Upsilon\left( \G,\G\right)  \geq{\epsilon}\right).
	\end{align*}
	Next, we remark that 
	\[\sup_{\substack{|s-t|\leq \delta, \\ t\geq 2\delta}}\left|\XiUVn(s)-\XiUVn(t) \right|=\sup_{\substack{|s-t|\leq \delta, \\ s\leq t, t\geq 2\delta}}\left|\XiUVn(s)-\XiUVn(t) \right|.\]
	Hence, we can assume for the treatment of this summand that $s\leq t$. With this, we obtain that
	\begin{align*}
	\limsup_{n\to\infty} \,&\prob\left(\sup_{\substack{|s-t|\leq \delta, \\ t\geq 2\delta}}\left|\XiUVn(s)-\XiUVn(t) \right| >{\epsilon}\right)\\
	\leq&  \prob\left(\sup_{\substack{|s-t|\leq \delta, \\ t\geq 2\delta}}\left|\int_{s}^{1-s}\!\G^2(x) \diff x -\int_{t}^{1-t}\! \G^2(x)\diff x \right|\geq {\epsilon}\right)\\
	\leq&\prob\left(\sup_{\substack{|s-t|\leq \delta, \\ t\geq 2\delta}}\int_{s}^{t}\!\G^2(x) \diff x \geq \frac{\epsilon}{2}\right)+\prob\left(\sup_{\substack{|s-t|\leq \delta, \\ t\geq 2\delta}}\int_{1-t}^{1-s}\!\G^2(x) \diff x \geq\frac{\epsilon}{2}\right).
	\end{align*}
	In the following, we focus on the first term. As $0<\delta<1/20$ and $ t\leq1/2$, it holds
	\begin{align*}
	&\prob\left(\sup_{\substack{|s-t|\leq \delta, \\ t\geq 2\delta}}\int_{s}^{t}\!\G^2(x) \diff x \geq \frac{\epsilon}{2}\right)
	\!\leq\!  \prob\left(\sup_{\substack{|s-t|\leq \delta, \\ t\geq 2\delta}} \left( \sup_{x\in[\delta,1-\delta]}\G^2(x) \right)(t-s) \geq \frac{\epsilon}{2}\right)\\
	=& \prob\left( \sup_{x\in[\delta,1-\delta]}\G^2(x)  \geq \frac{\epsilon}{\delta}\right)
	\leq\sqrt{\frac{\delta}{\epsilon}}\Eargs{\sup_{x\in[\delta,1-\delta]}\left| \G(x)\right| }. 
	\end{align*}
	By Lemma D.11 the Gaussian process $\G$ is continuous on $[\delta,1-\delta]$ under the assumptions made, i.e., almost surely bounded on $[\delta,1-\delta]$. Thus, Theorem 2.1.1 of \citet{AdlerRandomFieldsGeometry2007} ensures that $\Eargs{\sup_{x\in[\delta,1-\delta]}\left| \G(x)\right| }<\infty$. Hence, we find that
	\begin{align*}
	\lim_{\delta\to0+} \prob\left(\sup_{\substack{|s-t|\leq \delta, \\ t\geq 2\delta}}\int_{s}^{t}\!\G^2(x) \diff x \geq \frac{\epsilon}{2}\right)\leq \lim_{\delta\to0+}\sqrt{\frac{\delta}{\epsilon}}\Eargs{\sup_{x\in[\delta,1-\delta]}\left| \G(x)\right| }=0.
	\end{align*} 
Analogously,
	\[\lim_{\delta\to0+}\prob\left(\sup_{\substack{|s-t|\leq \delta, \\ t\geq 2\delta}}\int_{1-t}^{1-s}\!\G^2(x) \diff x >\frac{\epsilon}{2}\right)=0.\]
	This concludes the treatment of the second summand in \eqref{eq:twosummands}.\vspace*{2mm}
	
	\noindent Combining the results for \textbf{I} and \textbf{II}, we find that
	\begin{align*}&\lim_{\delta\to0+}\limsup_{n\to\infty}\prob\left( \sup_{|s-t|\leq \delta}\left|\XiUVn(s)-\XiUVn(t) \right| >\epsilon\right)
	=0.\end{align*}
	Thus, we have proven \Cref{lemma:Stetigkeitsmodul}.
\end{proof}
Now, we obtain the tightness of the sequence $\left\{\XiUVn\right\}_{n\in\mathbb{N}}$ in $C[0,1/2]$ as a simple consequence of the above results.
\begin{cor}\label{cor:tightness}
	Under \Cref*{cond:secondcondition}, the sequence $\left\{\XiUVn\right\}_{n\in\mathbb{N}}$ is tight in $\left( C[0,1/2],||\cdot||_\infty\right)$.
\end{cor}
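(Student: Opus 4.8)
The plan is to apply the classical characterization of tightness on the space of continuous functions (e.g.\ \citet[Thm.~7.3]{BillingsleyConvergenceProbabilityMeasures2013}), adapted to the compact interval $[0,1/2]$: a sequence of random elements of $\left(C[0,1/2],\|\cdot\|_\infty\right)$ is tight if and only if (a) the one-dimensional marginal at a single point is tight, and (b) the modulus of continuity is asymptotically negligible, i.e.\ $\lim_{\delta\to0^+}\limsup_{n\to\infty}\prob(\omega(\XiUVn,\delta)>\epsilon)=0$ for every $\epsilon>0$. Since $\beta\mapsto\XiUVn(\beta)$ is continuous (the integrand is fixed and only the limits of integration vary with $\beta$), each $\XiUVn$ is genuinely a random element of $C[0,1/2]$, so this criterion applies.

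Condition (b) is precisely the content of \Cref{lemma:Stetigkeitsmodul}, which is already established. It therefore only remains to verify condition (a), for which I would take the base point $\beta=0$ and show that the real-valued sequence $\left\{\XiUVn(0)\right\}_{n\in\N}$ is tight. Here $\XiUVn(0)=\tfrac{n}{2}\dodstatnn$ at $\beta=0$, a nonnegative random variable; by the finite-sample bound of \Cref{thm:finite sample bound for p=2} (valid for $\beta=0$ under \Cref{cond:secondcondition}, which supplies $J_2(\muU)<\infty$) one obtains, with $n=m$,
\[
\Eargs{\XiUVn(0)}\leq\frac{n}{2}\left(\frac{8}{n+1}+\frac{8}{n+1}\right)J_2(\muU)=\frac{8n}{n+1}J_2(\muU)\leq 8\,J_2(\muU),
\]
uniformly in $n$. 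Markov's inequality then gives $\sup_{n}\prob(\XiUVn(0)>K)\leq 8\,J_2(\muU)/K\to0$ as $K\to\infty$, which is exactly the tightness of the marginal. This is the step the text attributes to Lemma~B.6 together with \Cref{thm:finite sample bound for p=2}.

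Combining (a) and (b) through the cited characterization yields the tightness of $\left\{\XiUVn\right\}_{n\in\N}$ in $C[0,1/2]$, completing the proof. I do not expect a genuine obstacle at this stage: the entire difficulty has been front-loaded into \Cref{lemma:Stetigkeitsmodul}, whose proof handled the delicate estimates of the integrated quantile differences near the endpoints $0$ and $1$ (via \Cref{lemma:integrated difference for small beta}) separately from the interior, where the weak convergence of $\UV$ to the Gaussian process $\G$ of \Cref{thm:main2} together with the continuous mapping theorem applies. The only points requiring a line of care are the measurability of $\XiUVn$ as a map into $C[0,1/2]$, which follows from the continuity of its sample paths, and ensuring that the finite-sample bound is invoked only for parameter values ($n\geq 3$, $\beta=0$) admitted by its hypotheses.
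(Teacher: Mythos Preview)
Your proposal is correct and follows essentially the same route as the paper: both invoke \citet[Thm.~7.3]{BillingsleyConvergenceProbabilityMeasures2013}, obtain the modulus-of-continuity condition from \Cref{lemma:Stetigkeitsmodul}, and establish tightness of $\XiUVn(0)$ via \Cref{thm:finite sample bound for p=2} (with Lemma~B.6 supplying $J_2(\muU)<\infty$ under \Cref{cond:secondcondition}). Your version is slightly more explicit in spelling out the Markov-inequality step and the measurability remark, but the argument is the same.
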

\begin{proof}
	By Theorem 7.3 in \citet{BillingsleyConvergenceProbabilityMeasures2013} (and a rescaling argument) it is sufficient to prove that the sequence $\left\{\XiUVn(0)\right\}_{n\in\mathbb{N}}$ is tight in $\R$ and that\[\lim_{\delta\to0^+}\limsup_{n\to\infty}\prob\left(\omega(\XiUVn,\delta))>\epsilon)\right)=0.\] We have already noted that $\left\{\XiUVn(0)\right\}_{n\in\mathbb{N}}$ is tight (by \Cref{thm:finite sample bound for p=2}, which is applicable due to Lemma B.6) and thus \Cref{lemma:Stetigkeitsmodul} yields \Cref{cor:tightness}. 
\end{proof}
We conclude the proof of \Cref{thm:main2} $(ii)$ by using the Skorohod Representation Theorem \cite[Thm. 6.7]{BillingsleyConvergenceProbabilityMeasures2013} to verify (see Section B.2.2) that the tightness of $\left\{\XiUVn \right\}_{n\in\mathbb{N}}$ induces that
	\[\frac{nm}{n+m}\int_{0}^{1}\!\left( U_n^{-1}(t)-V_m^{-1}(t)\right)^2 \, dt\rightsquigarrow\int_{0}^{1}\!(\G(t))^2 \, dt.\]

\subsection{Proof of Theorem \ref*{thm:main} \texorpdfstring{$(ii)$}{(ii)}}\label{subsec:proof of thm:main2}
The most important step of the proof of \Cref{thm:main} $(i)$ (see \cite[Sec. B.4]{supplement}) is to derive the limit distributions of 
 \begin{align}\label{eq:limit1}
 \int_{\beta}^{1-\beta}\!\zeta(t)\mathbb{U}_n^{-1}(t) \, dt  \text{ and } \int_{\beta}^{1-\beta}\!\zeta(t)\mathbb{V}_m^{-1}(t) \, dt
 \end{align}
 under \Cref{cond:firstcondition}, where $\zeta(t)\coloneqq U^{-1}(t)-V^{-1}(t)$.
 Under \Cref{cond:firstcondition} these can be derived via the distributional limits for the empirical $U$-quantile processes $\mathbb{U}_n^{-1}\coloneqq\sqrt{n}\big(U_n^{-1}\allowdisplaybreaks -U^{-1}\big) $ and $\mathbb{V}_m^{-1}\coloneqq\sqrt{m} \big(V_m^{-1}-V^{-1} \big)$ in $\ell^\infty[\beta,1-\beta]$. However, as already argued, we cannot expect $\ell^\infty(0,1)$-convergence of $\mathbb{U}_n^{-1}$ and $\mathbb{V}_m^{-1}$ under \Cref{cond:secondcondition}. Reconsidering \eqref{eq:limit1}, we realize that $\ell^1(0,1)$-convergence of $\mathbb{U}_n^{-1}$ and $\mathbb{V}_m^{-1}$ is sufficient to derive the corresponding limiting distributions. Convergence in $\ell^1(0,1)$ is much weaker than convergence in $\ell^2(0,1)$ or $\ell^\infty(0,1)$. Indeed, it turns out that this convergence can quickly be verified, since $\phi_{inv}$ is Hadamard differentiable in the present setting as a map from $\mathbb{D}_2\subset D[a,b]\to \ell^1(0,1)$. Using ideas from \citet{kaji2017switching} we can show the following.
 \begin{lemma}\label{lem:Hadamardfiff2}
 	Let $F$ have compact support on $[a,b]$ and let $F$ be continuously differentiable on its support with derivative $f$ that is strictly positive on $(a,b)$ (Possibly, $f(a)=0$ and/or $f(b))=0$). Then the inversion functional $\phi_{inv}:F\mapsto F^{-1}$ as a map $\mathbb{D}_2\subset D[a,b]\to \ell^1(0,1)$ is Hadamard-differentiable at $F$ tangentially to $C[a,b]$ with derivative $\alpha\mapsto-(\alpha/f)\circ F^{-1}$.
 \end{lemma}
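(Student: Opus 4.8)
The plan is to verify Hadamard differentiability directly from the definition: fix sequences $t_n \to 0$ and $h_n \to h$ in $D[a,b]$ with $h \in C[a,b]$ and $F_n := F + t_n h_n \in \mathbb{D}_2$, and show that the rescaled increments $r_n := (F_n^{-1} - F^{-1})/t_n$ converge in $\ell^1(0,1)$ to $\alpha_h := -(h/f)\circ F^{-1}$. First I would establish pointwise convergence $r_n(p) \to \alpha_h(p)$ for every $p \in (0,1)$. Since $f$ is continuous and strictly positive on $(a,b)$, for each fixed $p$ the quantile $F^{-1}(p)$ lies in the open interval $(a,b)$ where the density is bounded away from zero on a neighborhood; hence the classical interior argument (a mean value expansion of $F$ at $F^{-1}(p)$ together with the convergence $F_n^{-1}(p) \to F^{-1}(p)$, which itself follows from $\|F_n - F\|_\infty = |t_n|\,\|h_n\|_\infty \to 0$ and the strict monotonicity of $F$) yields $r_n(p) \to -h(F^{-1}(p))/f(F^{-1}(p))$. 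This is the pointwise content of \cite[Lemma 3.9.23]{vaartWeakConvergenceEmpirical1996} applied locally, so no new idea is needed here; the difficulty is that this convergence is not uniform up to the endpoints once $f$ is allowed to vanish at $a$ or $b$, which is why the target space must be weakened from $\ell^\infty$ to $\ell^1$.

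The crucial step, following \citet{kaji2017switching}, is to compute the $\ell^1$ norm of $r_n$ exactly by switching from quantiles to distribution functions. Using the layer-cake identity $|u-v| = \int_{\R} |\mathds{1}\{u > x\} - \mathds{1}\{v > x\}|\,dx$ for $u,v \in [a,b]$, Fubini's theorem, and the switching relation $F_n^{-1}(p) > x \iff p > F_n(x)$ (valid for the generalized inverse of any distribution function in $\mathbb{D}_2$), I would obtain
\[
\int_0^1 \big| F_n^{-1}(p) - F^{-1}(p)\big|\,dp = \int_a^b \int_0^1 \big|\mathds{1}\{p > F_n(x)\} - \mathds{1}\{p > F(x)\}\big|\,dp\,dx = \int_a^b |F_n(x) - F(x)|\,dx .
\]
Dividing by $|t_n|$ and using $F_n - F = t_n h_n$ gives $\|r_n\|_{L^1(0,1)} = \int_a^b |h_n(x)|\,dx$, which converges to $\int_a^b |h(x)|\,dx$ by the uniform convergence of $h_n$. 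The change of variables $x = F^{-1}(p)$, $dp = f(x)\,dx$, then shows $\int_a^b |h| = \int_0^1 |(h/f)\circ F^{-1}| = \|\alpha_h\|_{L^1(0,1)}$; in particular $\alpha_h \in L^1(0,1)$ even though $\alpha_h$ need not be bounded, because the Jacobian $f$ cancels the vanishing denominator. Thus the norms of the increments converge to the norm of the candidate derivative.

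Finally, I would upgrade pointwise convergence to $L^1$ convergence by the Riesz--Scheff\'e lemma: if $r_n \to \alpha_h$ almost everywhere and $\|r_n\|_{L^1} \to \|\alpha_h\|_{L^1} < \infty$, then $\|r_n - \alpha_h\|_{L^1} \to 0$ (apply Fatou to the nonnegative functions $|r_n| + |\alpha_h| - |r_n - \alpha_h|$). This is precisely Hadamard differentiability at $F$ tangentially to $C[a,b]$ into $\ell^1(0,1)$, and the candidate derivative $\alpha \mapsto -(\alpha/f)\circ F^{-1}$ is a bounded linear map from $C[a,b]$ to $\ell^1(0,1)$ since $\|-(\alpha/f)\circ F^{-1}\|_{L^1} = \int_a^b|\alpha| \le (b-a)\|\alpha\|_\infty$. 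The main obstacle---and the reason the standard $\ell^\infty$ proof breaks down---is that near the endpoints the difference quotient is not uniformly controlled when $f$ degenerates; the switching identity of the second paragraph is what circumvents this, trading uniform endpoint control for exact global $L^1$ bookkeeping of mass, after which Scheff\'e's lemma does the rest.
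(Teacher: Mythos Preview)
Your proof is correct and takes a genuinely different route from the paper's. Both arguments rely on the same two ingredients---the classical interior Hadamard differentiability of $\phi_{inv}$ from \cite[Lemma 3.9.23]{vaartWeakConvergenceEmpirical1996} and the Kaji-type switching identity $\int_0^1|G^{-1}-H^{-1}|=\int_a^b|G-H|$---but they assemble them differently. The paper fixes $\epsilon>0$, splits $\int_0^1$ into an interior piece $[F(a_\epsilon)+\epsilon,F(b_\epsilon)-\epsilon]$ (handled by uniform convergence from the classical lemma) and two boundary pieces $[0,2\epsilon]$, $[1-2\epsilon,1]$ (shown to be small via the switching identity applied locally), and then sends $\epsilon\to0$. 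You instead use the classical lemma only to extract pointwise convergence on $(0,1)$, apply the switching identity globally to get the exact equality $\|r_n\|_{L^1}=\int_a^b|h_n|$, and close with the Riesz--Scheff\'e argument (pointwise a.e.\ plus norm convergence implies $L^1$ convergence). Your version avoids the $\epsilon$-bookkeeping and the separate boundary analysis, at the price of invoking Scheff\'e; the paper's version is slightly more hands-on but entirely self-contained. Both are valid, and your observation that the derivative is bounded $C[a,b]\to\ell^1(0,1)$ via $\|(\alpha/f)\circ F^{-1}\|_{L^1}=\int_a^b|\alpha|\le(b-a)\|\alpha\|_\infty$ is a nice explicit check that the paper leaves implicit.
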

 \begin{proof}
 	Let $h_n\to h$ uniformly in $D[a,b]$, where $h$ is continuous, $t_n\to 0$ and $F+t_nh_n\in\mathbb{D}_2$ for all $n\in\N$. Let $\HadPhi(\alpha)=-(\alpha/f)\circ F^{-1}$. We have to demonstrate that
 	\[\left\| \frac{\phi_{inv}(F+t_nh_n)-\phi_{inv}(F)}{t_n}-\HadPhi(h)\right\|_{\ell^1(0,1)} \to 0,\]
 	as $n\to \infty$. We realize that for every $\epsilon>0$ there exist $a_\epsilon,b_\epsilon\in[a,b]$ such that $\max\lbrace F(a_\epsilon),1-F(b_\epsilon)\rbrace<\epsilon $ and $f$ is strictly positive on $[a_\epsilon,b_\epsilon]$. It follows that	
 	\begin{align*}
 	&\left\| \frac{\phi_{inv}(F+t_nh_n)-\phi_{inv}(F)}{t_n}-\HadPhi(h)\right\|_{\ell^1(0,1)}\\\leq& \int_{F(a_\epsilon)+\epsilon}^{F(b_\epsilon)-\epsilon}\!\left| \frac{\phi_{inv}(F+t_nh_n)-\phi_{inv}(F)}{t_n}-\HadPhi(h)\right|(s)\,ds\\+&\int_{0}^{2\epsilon}\!\left| \frac{\phi_{inv}(F+t_nh_n)-\phi_{inv}(F)}{t_n}-\HadPhi(h)\right|(s)\,ds\\ +&\int_{1-2\epsilon}^{1}\!\left| \frac{\phi_{inv}(F+t_nh_n)-\phi_{inv}(F)}{t_n}-\HadPhi(h)\right|(s)\,ds. 
 	\end{align*}
 Next, we treat the summands separately. The claim follows once we have shown that the first summand vanishes for all $\epsilon>0$ as $n\to\infty$ and the other two summands become arbitrarily small for $\epsilon$ small and $n\to\infty$.
 
 \noindent\textit{First summand:} We start with the first summand. Since its requirements are fulfilled for all $\epsilon>0$, we have by Lemma 3.9.23 of \citet{vaartWeakConvergenceEmpirical1996} that
 	\begin{align}\label{eq:li-Hadamard first summand}
 	\sup_{s\in [F(a_\epsilon)+\epsilon,F(b_\epsilon)-\epsilon]}\left| \frac{\phi_{inv}(F+t_nh_n)-\phi_{inv}(F)}{t_n}-\HadPhi(h)\right|(s)\to 0,
 	\end{align}
 	as $n\to\infty$. Thus, the same holds for 
 	\[\int_{F(a_\epsilon)+\epsilon}^{F(b_\epsilon)-\epsilon}\!\left| \frac{\phi_{inv}(F+t_nh_n)-\phi_{inv}(F)}{t_n}-\HadPhi(h)\right|(s)\,ds,\]
 	which is bounded by \eqref{eq:li-Hadamard first summand}.
 	
 	\noindent\textit{Second summand:} We have
 	\begin{align}
 	&\int_{0}^{2\epsilon}\!\left| \frac{\phi_{inv}(F+t_nh_n)-\phi_{inv}(F)}{t_n}-\HadPhi(h)\right|(s)\,ds\nonumber\\
 	\leq &\int_{0}^{2\epsilon}\!\left| \frac{\phi_{inv}(F+t_nh_n)-\phi_{inv}(F)}{t_n}\right|(s)\,ds +\int_{0}^{2\epsilon}\!\left|\HadPhi(h)\right|(s)\,ds.\label{eq:nikolaus}
 	\end{align}
 	In the following, we consider both terms separately. For the first term, we find that
 	\begin{align*}
 	&\int_{0}^{2\epsilon}\!\left| \frac{\phi_{inv}(F+t_nh_n)-\phi_{inv}(F)}{t_n}\right|(s)\,ds\\=&\frac{1}{|t_n|}\int_{0}^{2\epsilon}\!\left| (F+t_nh_n)^{-1}(s)-F^{-1}(s)\right| \,ds.
 	\end{align*}
 	Next, we realize that for $G\in\mathbb{D}_2\subset D[a,b]$ and $s\in(0,1)$, we have that
 	\begin{align*}G^{-1}(s)=-\int_a^{b}\!\indifunc{ s\leq G(x)}\,dx+b.\end{align*}
 	Since $F\in \mathbb{D}_2$ and $F+t_nh_n\in\mathbb{D}_2$ for all $n\in\N$, this yields that
 	\begin{align*}
 	&\frac{1}{|t_n|}\int_{0}^{2\epsilon}\!\left| (F+t_nh_n)^{-1}(s)-F^{-1}(s)\right| \,ds\\
 	\leq&\frac{1}{|t_n|} \int_a^{b}\!\int_{0}^{2\epsilon}\!\left|\indifunc{s\leq (F+t_nh_n)(x)}-\indifunc{s\leq F(x)}\right|\,ds\,dx, 
 	\end{align*}
 	where we applied the Theorem of Tonelli/Fubini \cite[Thm. 18.3]{billingsley2008probability} in the last step. Let in the following $F_{t_nh_n}=F+t_nh_n$. Then, we obtain that
 	\begin{align*}&\int_{0}^{2\epsilon}\!\!\left|\indifunc{s\leq F_{t_nh_n}(x)}-\indifunc{s\leq F(x)}\right|\,ds\\\leq&
 	\begin{cases}
 	\left|F(x)-F_{t_nh_n}(x) \right| , &\mathrm{if}~ \min\left\lbrace F(x),F_{t_nh_n}(x)\right\rbrace \leq 2\epsilon.\\
 	0, & \mathrm{else}.
 	\end{cases}\end{align*}
 	Let now $x\geq F^{-1}\left(2\epsilon+\left\| h_nt_n\right\|_\infty \right)$, then it follows that
 	\[F(x)\geq2\epsilon+\left\| h_nt_n\right\|_\infty\geq2\epsilon\] { as well as } \[F_{t_nh_n}(x)\geq 2\epsilon+\left\| h_nt_n\right\|_\infty+t_nh_n(x)\geq 2\epsilon.\]
 	Combining these findings, we obtain that
 	\begin{align*}
 	&\frac{1}{|t_n|}\int_{0}^{2\epsilon}\!\left| (F+t_nh_n)^{-1}(s)-F^{-1}(s)\right| \,ds\\\leq&\frac{1}{|t_n|}\int_{a}^{F^{-1}\left(2\epsilon+\left\| h_nt_n\right\|_\infty \right)}\!\left|(F+t_nh_n)(x)-F(x)\right| \,dx\\
 	\leq &\left\|h_n-h \right\|_{\ell^1(a,b)}+ \int_{a}^{F^{-1}\left(2\epsilon+\left\| h_nt_n\right\|_\infty \right)}\!\left|h(x)\right| \,dx.
 	\end{align*}
 	We realize that the first term goes to zero as $n\to\infty$ by construction. Further, since $t_n\to0$ for $n\to\infty$, we obtain that $F^{-1}\left(2\epsilon+\left\| h_nt_n\right\|_\infty \right)\to F^{-1}(2\epsilon) $. Thus, for $n\to\infty$ the second term can be made arbitrarily small by the choice of $\epsilon$.
 	
 	\noindent For the second term in \eqref{eq:nikolaus}, we obtain by a change of variables that
 	\begin{align*}
 	\int_{0}^{2\epsilon}\!\left|\HadPhi(h)\right|(s)\,ds=\int_{0}^{2\epsilon}\!\frac{h\left( F^{-1}(u)\right) }{f\left( F^{-1}(u)\right)}\,du=\int_{a}^{F^{-1}(2\epsilon)}\!h (u) \,du.
 	\end{align*}
 	Thus, this term will be arbitrarily small for $\epsilon$ small.
 	
 	\noindent\textit{Third summand:} The third summand can be treated with the same arguments as the second.
 \end{proof}
With the above lemma established it is straight forward to prove that
\begin{equation*}\mathbb{U}_n^{-1}\rightsquigarrow\mathbb{G}_1 \text{ in } \ell^1(0,1) \text{ and }
 \mathbb{V}_m^{-1}\rightsquigarrow\mathbb{G}_2 \text{ in } \ell^1(0,1).\end{equation*}
where $\mathbb{G}_1$ and $\mathbb{G}_2$ are centered, independent, continuous Gaussian processes with covariance structures
\begin{align*}
&\Cov\left(\mathbb{G}_1(t),\mathbb{G}_1(t') \right)= \frac{4}{(u\circ U^{-1}(t))(u\circ U^{-1}(t'))}\Gamma_{\dX}(U^{-1}(t),U^{-1}(t')) 
\end{align*}
and
\begin{align*}
&\Cov\left(\mathbb{G}_2(t),\mathbb{G}_2(t') \right)= \frac{4}{(v\circ V^{-1}(t))(v\circ V^{-1}(t'))}\Gamma_{\dY}(V^{-1}(t),V^{-1}(t')),
\end{align*}
where $u=U'$ and $v=V'$. This allows us to essentially repeat the arguments of the proof of \Cref{thm:main} $(i)$. For the missing details we refer to \cite[Sec. B.4]{supplement}.
\section*{Acknowledgements}
We are grateful to F. M\'emoli, V. Liebscher and M. Wendler for interesting discussions and helpful comments and to C. Br\'echeteau for providing $\mathsf{R}$ code for the application of the DTM-test. Support by the DFG Research Training Group 2088 Project A1 and Cluster of Excellence MBExC 2067 is gratefully acknowledged.
\bibliographystyle{plainnat.bst}
\bibliography{Dod}

\begin{thebibliography}{82}
\providecommand{\natexlab}[1]{#1}
\providecommand{\url}[1]{\texttt{#1}}
\expandafter\ifx\csname urlstyle\endcsname\relax
  \providecommand{\doi}[1]{doi: #1}\else
  \providecommand{\doi}{doi: \begingroup \urlstyle{rm}\Url}\fi

\bibitem[Adler and Taylor(2007)]{AdlerRandomFieldsGeometry2007}
Robert~J. Adler and Jonathan~E. Taylor.
\newblock \emph{Random Fields and Geometry}.
\newblock Springer Monographs in Mathematics. Springer-Verlag, 2007.

\bibitem[Alvarez-Esteban et~al.(2008)Alvarez-Esteban, Del~Barrio,
  Cuesta-Albertos, and Matran]{alvarez2008trimmed}
Pedro~C{\'e}sar Alvarez-Esteban, Eustasio Del~Barrio, Juan~Antonio
  Cuesta-Albertos, and Carlos Matran.
\newblock Trimmed comparison of distributions.
\newblock \emph{Journal of the American Statistical Association}, 103\penalty0
  (482):\penalty0 697--704, 2008.

\bibitem[Alvarez-Melis and Jaakkola(2018)]{alvarez2018gromov}
David Alvarez-Melis and Tommi~S. Jaakkola.
\newblock {Gromov-Wasserstein} alignment of word embedding spaces.
\newblock \emph{arXiv preprint arXiv:1809.00013}, 2018.

\bibitem[Arcones and Gin{\'e}(1993)]{arcones1993limit}
Miguel~A. Arcones and Evarist Gin{\'e}.
\newblock Limit theorems for {$U$}-processes.
\newblock \emph{The Annals of Probability}, pages 1494--1542, 1993.

\bibitem[Arcones and Gin{\'e}(1994)]{arcones1994u}
Miguel~A. Arcones and Evarist Gin{\'e}.
\newblock {$U$}-processes indexed by {V}apnik-\v{C}ervonenkis classes of
  functions with applications to asymptotics and bootstrap of {$U$}-statistics
  with estimated parameters.
\newblock \emph{Stochastic Processes and their Applications}, 52\penalty0
  (1):\penalty0 17--38, 1994.

\bibitem[Arenas and Abelson(1997)]{arenas1997prp43}
Jaime~E. Arenas and John~N. Abelson.
\newblock Prp43: {An} {RNA} helicase-like factor involved in spliceosome
  disassembly.
\newblock \emph{Proceedings of the National Academy of Sciences}, 94\penalty0
  (22):\penalty0 11798--11802, 1997.

\bibitem[Au and Yuen(1999)]{AuFeaturebasedreverseengineering1999}
Chikit~K. Au and Matthew M.~F. Yuen.
\newblock Feature-based reverse engineering of mannequin for garment design.
\newblock \emph{Computer-Aided Design}, 31\penalty0 (12), 1999.

\bibitem[Baringhaus and Franz(2004)]{baringhaus2004new}
Ludwig Baringhaus and Carsten Franz.
\newblock On a new multivariate two-sample test.
\newblock \emph{Journal of Multivariate Analysis}, 88\penalty0 (1):\penalty0
  190--206, 2004.

\bibitem[Belongie et~al.(2002)Belongie, Malik, and
  Puzicha]{BelongieShapematchingobject2002}
Serge Belongie, Jitendra Malik, and Jan Puzicha.
\newblock Shape matching and object recognition using shape contexts.
\newblock \emph{IEEE Transactions on Pattern Analysis and Machine
  Intelligence}, 24\penalty0 (4), 2002.

\bibitem[Berman et~al.(2000)Berman, Westbrook, Feng, Gilliland, Bhat, Weissig,
  Shindyalov, and Bourne]{proteindatabase}
Helen~M. Berman, John Westbrook, Zukang Feng, Gary Gilliland, T.~N. Bhat, Helge
  Weissig, Ilya~N. Shindyalov, and Philip~E. Bourne.
\newblock The protein data bank.
\newblock \emph{Nucleic Acids Research}, 28\penalty0 (1):\penalty0 235--242,
  2000.
\newblock \doi{10.1093/nar/28.1.235}.
\newblock URL \url{http://http://www.rcsb.org}.

\bibitem[Berrendero et~al.(2016)Berrendero, Cuevas, and
  Pateiro-L{\'o}pez]{berrendero2016shape}
Jos{\'e}~R. Berrendero, Antonio Cuevas, and Beatriz Pateiro-L{\'o}pez.
\newblock Shape classification based on interpoint distance distributions.
\newblock \emph{Journal of Multivariate Analysis}, 146:\penalty0 237--247,
  2016.

\bibitem[Berrett et~al.(2020)Berrett, Kontoyiannis, and
  Samworth]{berrett2020optimal}
Thomas~B. Berrett, Ioannis Kontoyiannis, and Richard~J. Samworth.
\newblock Optimal rates for independence testing via {$U$}-statistic
  permutation tests.
\newblock \emph{arXiv preprint arXiv:2001.05513}, 2020.

\bibitem[Berthet and Fort(2019)]{berthet2019weak}
Philippe Berthet and Jean-Claude Fort.
\newblock Weak convergence of empirical {Wasserstein} type distances.
\newblock \emph{arXiv preprint arXiv:1911.02389}, 2019.

\bibitem[Berthet et~al.(2017)Berthet, Fort, and Klein]{berthet2017central}
Philippe Berthet, Jean-Claude Fort, and Thierry Klein.
\newblock A central limit theorem for {Wasserstein} type distances between two
  different laws.
\newblock \emph{arXiv preprint arXiv:1710.09763}, 2017.

\bibitem[Billingsley(1979)]{billingsley2008probability}
Patrick Billingsley.
\newblock \emph{Probability and Measure}.
\newblock John Wiley \& Sons, 1979.

\bibitem[Billingsley(2013)]{BillingsleyConvergenceProbabilityMeasures2013}
Patrick Billingsley.
\newblock \emph{Convergence of Probability Measures}.
\newblock John Wiley \& Sons, 2013.

\bibitem[Bobkov and Ledoux(2016)]{bobkov2016one}
Sergey Bobkov and Michel Ledoux.
\newblock One-dimensional empirical measures, order statistics and
  {Kantorovich} transport distances.
\newblock \emph{preprint}, 2016.

\bibitem[Br{\'e}cheteau(2019)]{brecheteau2019statistical}
Claire Br{\'e}cheteau.
\newblock A statistical test of isomorphism between metric-measure spaces using
  the distance-to-a-measure signature.
\newblock \emph{Electronic Journal of Statistics}, 13\penalty0 (1):\penalty0
  795--849, 2019.

\bibitem[Brinkman and Olver(2012)]{brinkman2012invariant}
Daniel Brinkman and Peter~J. Olver.
\newblock Invariant histograms.
\newblock \emph{The American Mathematical Monthly}, 119\penalty0 (1):\penalty0
  4--24, 2012.

\bibitem[C{\'a}rdenas et~al.(2005)C{\'a}rdenas, Eugenio, Keim, Saupe, Schreck,
  and Vrani{\'c}]{CardenasFeaturebasedSimilaritySearch2005}
Bustos C{\'a}rdenas, Benjamin Eugenio, Daniel~A. Keim, Dietmar Saupe, Tobias
  Schreck, and Dejan~V. Vrani{\'c}.
\newblock Feature-based similarity search in {3D} object databases.
\newblock \emph{ACM computing surveys}, 37\penalty0 (4), 2005.

\bibitem[Chowdhury and M{\'e}moli(2019)]{chowdhury2019gromov}
Samir Chowdhury and Facundo M{\'e}moli.
\newblock The {Gromov-Wasserstein} distance between networks and stable network
  invariants.
\newblock \emph{Information and Inference: A Journal of the IMA}, 8\penalty0
  (4):\penalty0 757--787, 2019.

\bibitem[Chowdhury and Needham(2019)]{chowdhury2019gromovaveraging}
Samir Chowdhury and Tom Needham.
\newblock {Gromov-Wasserstein} averaging in a riemannian framework.
\newblock \emph{arXiv preprint arXiv:1910.04308}, 2019.

\bibitem[Czado and Munk(1998)]{czado1998assessing}
Claudia Czado and Axel Munk.
\newblock Assessing the similarity of distributions-finite sample performance
  of the empirical {Mallows} distance.
\newblock \emph{Journal of Statistical Computation and Simulation}, 60\penalty0
  (4):\penalty0 319--346, 1998.

\bibitem[{d'Amico} et~al.(2008){d'Amico}, Frosini, and
  Landi]{dAmicoNaturalpseudodistanceoptimal2008}
Michele {d'Amico}, Patrizio Frosini, and Claudia Landi.
\newblock Natural pseudo-distance and optimal matching between reduced size
  functions.
\newblock \emph{arXiv:0804.3500}, 2008.

\bibitem[Dede(2009)]{dede2009empirical}
Sophie Dede.
\newblock An empirical central limit theorem in {$L_1$} for stationary
  sequences.
\newblock \emph{Stochastic Processes and their Applications}, 119\penalty0
  (10):\penalty0 3494--3515, 2009.

\bibitem[Dedecker et~al.(2017)Dedecker, Merlev{\`e}de,
  et~al.]{dedecker2017behavior}
J{\'e}r{\^o}me Dedecker, Florence Merlev{\`e}de, et~al.
\newblock Behavior of the {Wasserstein} distance between the empirical and the
  marginal distributions of stationary $\alpha $-dependent sequences.
\newblock \emph{Bernoulli}, 23\penalty0 (3):\penalty0 2083--2127, 2017.

\bibitem[del Barrio et~al.(1999)del Barrio, Cuesta-Albertos, Matr{\'a}n, and
  Rodr{\'\i}guez-Rodr{\'\i}guez]{BarrioTestsgoodnessfit1999}
Eustasio del Barrio, Juan~A. Cuesta-Albertos, Carlos Matr{\'a}n, and
  Jes{\'u}s~M. Rodr{\'\i}guez-Rodr{\'\i}guez.
\newblock Tests of goodness of fit based on the {$L_2$}-{Wasserstein} distance.
\newblock \emph{The Annals of Statistics}, 27\penalty0 (4), 1999.

\bibitem[del Barrio et~al.(2005)del Barrio, Gin{\'e}, and
  Utzet]{BarrioAsymptoticsL2functionals2005}
Eustasio del Barrio, Evarist Gin{\'e}, and Frederic Utzet.
\newblock Asymptotics for {$L_2$} functionals of the empirical quantile
  process, with applications to tests of fit based on weighted {Wasserstein}
  distances.
\newblock \emph{Bernoulli}, 11\penalty0 (1), 2005.

\bibitem[Dokmanic et~al.(2015)Dokmanic, Parhizkar, Ranieri, and
  Vetterli]{dokmanic2015euclidean}
Ivan Dokmanic, Reza Parhizkar, Juri Ranieri, and Martin Vetterli.
\newblock Euclidean distance matrices: {Essential} theory, algorithms, and
  applications.
\newblock \emph{IEEE Signal Processing Magazine}, 32\penalty0 (6):\penalty0
  12--30, 2015.

\bibitem[El-Mehalawi and Miller(2003)]{El-Mehalawidatabasesystemmechanical2003}
Mohamed El-Mehalawi and Allen~R. Miller.
\newblock A database system of mechanical components based on geometric and
  topological similarity. {Part II}: {Indexing}, retrieval, matching, and
  similarity assessment.
\newblock \emph{Computer-Aided Design}, 35\penalty0 (1), 2003.

\bibitem[Elad and Kimmel(2003)]{elad2003bending}
Asi Elad and Ron Kimmel.
\newblock On bending invariant signatures for surfaces.
\newblock \emph{IEEE Transactions on pattern analysis and machine
  intelligence}, 25\penalty0 (10):\penalty0 1285--1295, 2003.

\bibitem[Gellert et~al.(2019)Gellert, Hossain, Berens, Bruhn, Urbainsky,
  Liebscher, and Lillig]{gellert2019substrate}
Manuela Gellert, Md~Faruq Hossain, Felix Jacob~Ferdinand Berens, Lukas~Willy
  Bruhn, Claudia Urbainsky, Volkmar Liebscher, and Christopher~Horst Lillig.
\newblock Substrate specificity of thioredoxins and glutaredoxins--towards a
  functional classification.
\newblock \emph{Heliyon}, 5\penalty0 (12):\penalty0 e02943, 2019.

\bibitem[Gromov(1999)]{gromov1999metric}
Misha Gromov.
\newblock \emph{Metric structures for Riemannian and non-Riemannian spaces}.
\newblock vol. 152 of Progress in Mathematics (Birkh\"auser), 1999.

\bibitem[Holm and Sander(1993)]{holm1993protein}
Liisa Holm and Chris Sander.
\newblock Protein structure comparison by alignment of distance matrices.
\newblock \emph{Journal of Molecular Biology}, 233\penalty0 (1):\penalty0
  123--138, 1993.

\bibitem[Jones and Thirup(1986)]{jones1986using}
T.~Alwyn Jones and Soren Thirup.
\newblock Using known substructures in protein model building and
  crystallography.
\newblock \emph{The EMBO Journal}, 5\penalty0 (4):\penalty0 819--822, 1986.

\bibitem[Kaji(2018)]{kaji2017switching}
Tetsuya Kaji.
\newblock Switching to the new norm: {From} heuristics to formal tests using
  integrable empirical processes.
\newblock Technical report, Massachusetts Institute of Technology, 2018.

\bibitem[Kantorovich and Rubinstein(1958)]{kantorovich1958space}
Leonid~V. Kantorovich and Gennady~S. Rubinstein.
\newblock On a space of completely additive functions.
\newblock \emph{Vestnik Leningrad. Univ}, 13\penalty0 (7):\penalty0 52--59,
  1958.

\bibitem[Kantorovich(1942)]{kantorovich1942translocation}
LV~Kantorovich.
\newblock On the translocation of masses, cr dokl.
\newblock \emph{Acad. Sci. URSS}, 37:\penalty0 191--201, 1942.

\bibitem[Kim and Lin(1996)]{kim1996spliceosome}
Sahn-Ho Kim and Ren-Jang Lin.
\newblock Spliceosome activation by {Prp2} {ATPase} prior to the first
  transesterification reaction of pre-m{RNA} splicing.
\newblock \emph{Molecular and Cellular Biology}, 16\penalty0 (12):\penalty0
  6810--6819, 1996.

\bibitem[Kolodny et~al.(2005)Kolodny, Koehl, and
  Levitt]{kolodny2005comprehensive}
Rachel Kolodny, Patrice Koehl, and Michael Levitt.
\newblock Comprehensive evaluation of protein structure alignment methods:
  {Scoring} by geometric measures.
\newblock \emph{Journal of Molecular Biology}, 346\penalty0 (4):\penalty0
  1173--1188, 2005.

\bibitem[Krissinel and
  Henrick(2004)]{KrissinelSecondarystructurematchingSSM2004}
Eugene Krissinel and Kim Henrick.
\newblock Secondary-structure matching ({SSM}), a new tool for fast protein
  structure alignment in three dimensions.
\newblock \emph{Acta Crystallographica Section D: Biological Crystallography},
  60\penalty0 (12), 2004.

\bibitem[Kuntz(1975)]{kuntz1975approach}
I.~D. Kuntz.
\newblock Approach to the tertiary structure of globular proteins.
\newblock \emph{Journal of the American Chemical Society}, 97\penalty0
  (15):\penalty0 4362--4366, 1975.

\bibitem[Lebaron et~al.(2005)Lebaron, Froment, Fromont-Racine, Rain, Monsarrat,
  Caizergues-Ferrer, and Henry]{lebaron2005splicing}
Simon Lebaron, Carine Froment, Micheline Fromont-Racine, Jean-Christophe Rain,
  Bernard Monsarrat, Michele Caizergues-Ferrer, and Yves Henry.
\newblock The splicing {ATPase Prp43p} is a component of multiple preribosomal
  particles.
\newblock \emph{Molecular and Cellular Biology}, 25\penalty0 (21):\penalty0
  9269--9282, 2005.

\bibitem[Liebscher(2018)]{liebscher2018new}
Volkmar Liebscher.
\newblock New {Gromov}-inspired metrics on phylogenetic tree space.
\newblock \emph{Bulletin of mathematical biology}, 80\penalty0 (3):\penalty0
  493--518, 2018.

\bibitem[Lomont(2014)]{lomont2014applications}
John~S. Lomont.
\newblock \emph{Applications of Finite Groups}.
\newblock Academic Press, 2014.

\bibitem[Mallows(1972)]{mallows1972note}
Colin~L. Mallows.
\newblock A note on asymptotic joint normality.
\newblock \emph{The Annals of Mathematical Statistics}, pages 508--515, 1972.

\bibitem[Mason(1984)]{mason1984weak}
David~M. Mason.
\newblock Weak convergence of the weighted empirical quantile process in
  {$L^2(0,1)$}.
\newblock \emph{The Annals of Probability}, 12\penalty0 (1):\penalty0 243--255,
  1984.

\bibitem[{$\mathsf{R}$ Core Team}(2017)]{Rbasicversion}
{$\mathsf{R}$ Core Team}.
\newblock \emph{{$\mathsf{R}$}: {A} Language and Environment for Statistical
  Computing}.
\newblock $\mathsf{R}$ Foundation for Statistical Computing, Vienna, Austria,
  2017.
\newblock URL \url{https://www.R-project.org/}.

\bibitem[M{\'e}moli(2007)]{MemoliuseGromovHausdorffDistances2007}
Facundo M{\'e}moli.
\newblock On the use of {Gromov}-{Hausdorff} distances for shape comparison.
\newblock In \emph{Proceedings Point Based Graphics}, 2007.

\bibitem[M{\'e}moli(2011)]{Memoli2011}
Facundo M{\'e}moli.
\newblock {Gromov}\textendash{}{Wasserstein} distances and the metric approach
  to object matching.
\newblock \emph{Foundations of Computational Mathematics}, 11\penalty0
  (4):\penalty0 417--487, 2011.

\bibitem[M{\'e}moli and Sapiro(2004)]{MemoliComparingPointClouds2004}
Facundo M{\'e}moli and Guillermo Sapiro.
\newblock Comparing point clouds.
\newblock In \emph{{SGP} '04: {Proceedings} of the 2004 {Eurographics}/{ACM
  SIGGRAPH Symposium} on {Geometry Processing}}, volume~71, 2004.

\bibitem[M{\'e}moli and
  Sapiro(2005)]{MemoliTheoreticalComputationalFramework2005}
Facundo M{\'e}moli and Guillermo Sapiro.
\newblock A theoretical and computational framework for isometry invariant
  recognition of point cloud data.
\newblock \emph{Foundations of Computational Mathematics}, 5\penalty0
  (3):\penalty0 313--347, 2005.

\bibitem[Moltchanov(2012)]{moltchanov2012distance}
Dmitri Moltchanov.
\newblock Distance distributions in random networks.
\newblock \emph{Ad Hoc Networks}, 10\penalty0 (6):\penalty0 1146--1166, 2012.

\bibitem[Montero-Manso and Vilar(2019)]{montero2019two}
Pablo Montero-Manso and Jos{\'e}~A Vilar.
\newblock Two-sample homogeneity testing: {A} procedure based on comparing
  distributions of interpoint distances.
\newblock \emph{Statistical Analysis and Data Mining: The ASA Data Science
  Journal}, 12\penalty0 (3):\penalty0 234--252, 2019.

\bibitem[Munk and Czado(1998)]{MunkNonparametricvalidationsimilar1998}
Axel Munk and Claudia Czado.
\newblock Nonparametric validation of similar distributions and assessment of
  goodness of fit.
\newblock \emph{Journal of the Royal Statistical Society: Series B (Statistical
  Methodology)}, 60\penalty0 (1), 1998.

\bibitem[Nolan and Pollard(1987)]{NolanProcessesRatesConvergence1987}
Deborah Nolan and David Pollard.
\newblock {$U$}-processes: {Rates} of convergence.
\newblock \emph{The Annals of Statistics}, pages 780--799, 1987.

\bibitem[Nolan and Pollard(1988)]{nolan1988functional}
Deborah Nolan and David Pollard.
\newblock Functional limit theorems for {$U$}-processes.
\newblock \emph{The Annals of Probability}, pages 1291--1298, 1988.

\bibitem[Nussinov and
  Wolfson(1991)]{NussinovEfficientdetectionthreedimensional1991}
Ruth Nussinov and Haim~J. Wolfson.
\newblock Efficient detection of three-dimensional structural motifs in
  biological macromolecules by computer vision techniques.
\newblock \emph{Proceedings of the National Academy of Sciences}, 88\penalty0
  (23), 1991.

\bibitem[Osada et~al.(2002)Osada, Funkhouser, Chazelle, and
  Dobkin]{OsadaShapedistributions2002}
Robert Osada, Thomas Funkhouser, Bernard Chazelle, and David Dobkin.
\newblock Shape distributions.
\newblock \emph{ACM Transactions on Graphics (TOG)}, 21\penalty0 (4), 2002.

\bibitem[Pardalos and Vavasis(1991)]{pardalos1991quadratic}
Panos~M. Pardalos and Stephen~A. Vavasis.
\newblock Quadratic programming with one negative eigenvalue is {NP}-hard.
\newblock \emph{Journal of Global Optimization}, 1\penalty0 (1):\penalty0
  15--22, 1991.

\bibitem[Rhodes(2010)]{rhodes2010crystallography}
Gale Rhodes.
\newblock \emph{Crystallography made crystal clear: A guide for users of
  macromolecular models}.
\newblock Elsevier, 2010.

\bibitem[Robert-Paganin et~al.(2016)Robert-Paganin, Halladjian, Blaud, Lebaron,
  Delbos, Chardon, Capeyrou, Humbert, Henry, Henras,
  et~al.]{robert2016functional}
Julien Robert-Paganin, Maral Halladjian, Magali Blaud, Simon Lebaron, Lila
  Delbos, Florian Chardon, R{\'e}gine Capeyrou, Odile Humbert, Yves Henry,
  Anthony~K. Henras, et~al.
\newblock Functional link between {DEAH/RHA} helicase {Prp43} activation and
  {ATP} base binding.
\newblock \emph{Nucleic acids research}, 45\penalty0 (3):\penalty0 1539--1552,
  2016.

\bibitem[Rossman and Liljas(1974)]{rossman1974recognition}
Michael~G. Rossman and Anders Liljas.
\newblock Recognition of structural domains in globular proteins.
\newblock \emph{Journal of Molecular Biology}, 85\penalty0 (1):\penalty0
  177--181, 1974.

\bibitem[Rubner et~al.(2000)Rubner, Tomasi, and Guibas]{rubner2000earth}
Yossi Rubner, Carlo Tomasi, and Leonidas~J. Guibas.
\newblock The earth mover's distance as a metric for image retrieval.
\newblock \emph{International Journal of Computer Vision}, 40\penalty0
  (2):\penalty0 99--121, 2000.

\bibitem[Sandak et~al.(1995)Sandak, Nussinov, and
  Wolfson]{Sandakautomatedcomputervision1995}
Bilha Sandak, Ruth Nussinov, and Haim~J. Wolfson.
\newblock An automated computer vision and roboticsbased technique for 3-{D}
  flexible biomolecular docking and matching.
\newblock \emph{Bioinformatics}, 11\penalty0 (1), 1995.

\bibitem[Schmitt et~al.(2018)Schmitt, Hamann, Neumann, and
  Ficner]{schmitt2018crystal}
Andreas Schmitt, Florian Hamann, Piotr Neumann, and Ralf Ficner.
\newblock Crystal structure of the spliceosomal {DEAH}-box {ATPase Prp2}.
\newblock \emph{Acta Crystallographica Section D: Structural Biology},
  74\penalty0 (7):\penalty0 643--654, 2018.

\bibitem[{Schr\"odinger, LLC}(2015)]{PyMol}
{Schr\"odinger, LLC}.
\newblock The {PyMOL} molecular graphics system, version~1.8.
\newblock November 2015.

\bibitem[Sejdinovic et~al.(2013)Sejdinovic, Sriperumbudur, Gretton, Fukumizu,
  et~al.]{sejdinovic2013equivalence}
Dino Sejdinovic, Bharath Sriperumbudur, Arthur Gretton, Kenji Fukumizu, et~al.
\newblock Equivalence of distance-based and rkhs-based statistics in hypothesis
  testing.
\newblock \emph{The Annals of Statistics}, 41\penalty0 (5):\penalty0
  2263--2291, 2013.

\bibitem[Solomon et~al.(2016)Solomon, Peyr{\'e}, Kim, and
  Sra]{solomon2016entropic}
Justin Solomon, Gabriel Peyr{\'e}, Vladimir~G. Kim, and Suvrit Sra.
\newblock Entropic metric alignment for correspondence problems.
\newblock \emph{ACM Transactions on Graphics (TOG)}, 35\penalty0 (4):\penalty0
  72, 2016.

\bibitem[Sommerfeld and Munk(2018)]{sommerfeld2018inference}
Max Sommerfeld and Axel Munk.
\newblock Inference for empirical {Wasserstein} distances on finite spaces.
\newblock \emph{Journal of the Royal Statistical Society: Series B (Statistical
  Methodology)}, 80\penalty0 (1):\penalty0 219--238, 2018.

\bibitem[Srivastava et~al.(2016)Srivastava, Lal, Mishra, Angadi, Chaturvedi,
  Rai, and Rai]{srivastava2016efficient}
S.~Srivastava, Shashi~Bhushan Lal, Dwijesh~C. Mishra, U.B. Angadi, K.K.
  Chaturvedi, Shesh~N. Rai, and Anil Rai.
\newblock An efficient algorithm for protein structure comparison using elastic
  shape analysis.
\newblock \emph{Algorithms for Molecular Biology}, 11\penalty0 (1):\penalty0
  27, 2016.

\bibitem[Stoyan et~al.(2008)Stoyan, Kendall, and Mecke]{stoyan2008stochastic}
Dietrich Stoyan, Wilfrid~S. Kendall, and Joseph Mecke.
\newblock \emph{Stochastic Geometry and its Applications}.
\newblock John Wiley \& Sons, 2008.

\bibitem[Tameling et~al.(2019)Tameling, Sommerfeld, and
  Munk]{tameling2019empirical}
Carla Tameling, Max Sommerfeld, and Axel Munk.
\newblock Empirical optimal transport on countable metric spaces:
  {Distributional} limits and statistical applications.
\newblock \emph{The Annals of Applied Probability}, 29\penalty0 (5):\penalty0
  2744--2781, 2019.

\bibitem[Tauchert et~al.(2016)Tauchert, Fourmann, Christian, L{\"u}hrmann, and
  Ficner]{tauchert2016structural}
Marcel~J. Tauchert, Jean-Baptiste Fourmann, Henning Christian, Reinhard
  L{\"u}hrmann, and Ralf Ficner.
\newblock Structural and functional analysis of the {RNA} helicase {Prp43} from
  the thermophilic tukaryote chaetomium thermophilum.
\newblock \emph{Acta Crystallographica Section F: Structural Biology
  Communications}, 72\penalty0 (2):\penalty0 112--120, 2016.

\bibitem[Torralba et~al.(2003)Torralba, Murphy, Freeman, and
  Rubin]{TorralbaContextbasedvisionsystem2003}
Antonio Torralba, Kevin~P. Murphy, William~T. Freeman, and Mark~A. Rubin.
\newblock Context-based vision system for place and object recognition.
\newblock In \emph{Proceedings {Ninth IEEE International Conference} on
  {Computer Vision}}, 2003.

\bibitem[van~der {Vaart} and Wellner(1996)]{vaartWeakConvergenceEmpirical1996}
Aad~W. van~der {Vaart} and Jon Wellner.
\newblock \emph{Weak Convergence and Empirical Processes: {With} Applications
  to Statistics}.
\newblock Springer Series in Statistics. Springer-Verlag, 1996.

\bibitem[Vaserstein(1969)]{vaserstein1969markov}
Leonid~N. Vaserstein.
\newblock Markov processes over denumerable products of spaces, describing
  large systems of automata.
\newblock \emph{Problemy Peredachi Informatsii}, 5\penalty0 (3):\penalty0
  64--72, 1969.

\bibitem[Vershik(2013)]{vershik2007lv}
Anatoly~Moiseevich Vershik.
\newblock Long history of the {Monge-Kantorovich} transportation problem.
\newblock \emph{The Mathematical Intelligencer}, 35\penalty0 (4):\penalty0
  1--9, 2013.

\bibitem[Villani(2008)]{villani2008optimal}
C{\'e}dric Villani.
\newblock \emph{Optimal Transport: {Old} and New}.
\newblock Springer Science \& Business Media, 2008.

\bibitem[Viola and Jones(2001)]{viola2001rapid}
Paul Viola and Michael Jones.
\newblock Rapid object detection using a boosted cascade of simple features.
\newblock In \emph{Proceedings of the 2001 IEEE Computer Society Conference on
  Computer Vision and Pattern Recognition, 2001. CVPR 2001.}, volume~1, pages
  I--I. IEEE, 2001.

\bibitem[Weitkamp et~al.(2019)Weitkamp, Proksch, Tameling, and
  Munk]{supplement}
Christoph~A. Weitkamp, Katharina Proksch, Carla Tameling, and Axel Munk.
\newblock Supplement to "{Gromov-Wasserstein Distance based Object Matching:
  Asymptotic Inference"}.
\newblock 2019.

\bibitem[Wendler(2012)]{wendler2012u}
Martin Wendler.
\newblock {$U$}-processes, {$U$}-quantile processes and generalized linear
  statistics of dependent data.
\newblock \emph{Stochastic Processes and their Applications}, 122\penalty0
  (3):\penalty0 787--807, 2012.

\end{thebibliography}
\end{document}